\documentclass[11pt]{article}

\usepackage[T1]{fontenc}
\usepackage[a4paper,top=0.86in,bottom=0.88in,left=0.90in,right=0.90in]{geometry}
\usepackage{amsmath,amssymb,amsthm,mathtools}
\usepackage{enumitem}
\usepackage{microtype}
\usepackage[hidelinks]{hyperref}

\usepackage[nameinlink,noabbrev]{cleveref}

\hypersetup{
  pdftitle={Exact random covers of metric trees: balanced rounding, duality, and sharp thresholds},
  pdfauthor={Qi Wu and Yong Lu},
  pdfsubject={Random ball covers of finite metric trees},
  pdfkeywords={graph burning, metric tree, random cover, balanced matrix, set-covering polyhedron, fractional duality}
}

\allowdisplaybreaks
\setlist[enumerate]{leftmargin=2.3em,itemsep=0pt,topsep=2pt}

\usepackage{aliascnt}
\newtheorem{theorem}{Theorem}[section]
\newaliascnt{lemma}{theorem}
\newtheorem{lemma}[lemma]{Lemma}
\aliascntresetthe{lemma}
\newaliascnt{proposition}{theorem}
\newtheorem{proposition}[proposition]{Proposition}
\aliascntresetthe{proposition}
\newaliascnt{corollary}{theorem}
\newtheorem{corollary}[corollary]{Corollary}
\aliascntresetthe{corollary}
\newaliascnt{conjecture}{theorem}
\newtheorem{conjecture}[conjecture]{Conjecture}
\aliascntresetthe{conjecture}
\theoremstyle{definition}
\newaliascnt{definition}{theorem}
\newtheorem{definition}[definition]{Definition}
\aliascntresetthe{definition}
\theoremstyle{remark}
\newaliascnt{remark}{theorem}
\newtheorem{remark}[remark]{Remark}
\aliascntresetthe{remark}

\crefname{theorem}{Theorem}{Theorems}
\crefname{lemma}{Lemma}{Lemmas}
\crefname{proposition}{Proposition}{Propositions}
\crefname{corollary}{Corollary}{Corollaries}
\crefname{conjecture}{Conjecture}{Conjectures}
\crefname{definition}{Definition}{Definitions}
\crefname{remark}{Remark}{Remarks}
\crefname{section}{Section}{Sections}
\crefname{subsection}{Subsection}{Subsections}
\Crefname{theorem}{Theorem}{Theorems}
\Crefname{lemma}{Lemma}{Lemmas}
\Crefname{proposition}{Proposition}{Propositions}
\Crefname{corollary}{Corollary}{Corollaries}
\Crefname{conjecture}{Conjecture}{Conjectures}
\Crefname{definition}{Definition}{Definitions}
\Crefname{remark}{Remark}{Remarks}
\Crefname{section}{Section}{Sections}
\Crefname{subsection}{Subsection}{Subsections}

\newcommand{\R}{\mathbb R}
\newcommand{\C}{\mathcal C}
\newcommand{\E}{\mathbb E}
\newcommand{\one}{\mathbf 1}
\newcommand{\Nf}{\mathcal N_{\!f}}
\newcommand{\diamT}{\operatorname{diam}}
\newcommand{\cl}{\operatorname{cl}}
\newcommand{\Lam}[2]{\Lambda_{#1,#2}}
\newcommand{\wgt}{\mathrm w}

\title{Exact random covers of metric trees: balanced rounding, duality, and sharp thresholds\thanks{This work is supported by the National Natural Science Foundation of China (Nos.~12371348 and 12201258) and the High-Quality Science and Technology Cultivation Project of Jiangsu Normal University (No.~JSNUGZL2026069).}}
\author{Qi Wu, Yong Lu\thanks{Corresponding author.}\\[2pt]
\small School of Mathematics and Statistics, Jiangsu Normal University,\\[-1pt]
\small Xuzhou, Jiangsu 221116, People's Republic of China\\[-1pt]
\small Emails:~\texttt{wuqimath@163.com, luyong@jsnu.edu.cn}}
\date{}

\begin{document}
\maketitle

\begin{abstract}
Norin and Turcotte's asymptotically sharp bound for graph burning  [J. Combin. Theory Ser. B 168 (2024), 208--235] led them to an exact random-cover conjecture for finite metric trees. Let $U[0,r]$ be the uniform probability measure on $[0,r]$. They conjectured that every finite metric tree $T$ of length $L\ge2r$ admits a probability measure on $0$-good ball covers whose expected radius measure is at most $(L/r)U[0,r]$. We prove the conjecture for every finite metric tree.

We recast the bootstrapping calculation of Norin and Turcotte as a zero-error replacement certificate. The resulting local scale reduction, together with a three-piece decomposition and a macro-recursion, produces a fractional marked-ball cover with the exact radius budget. We then pass from the fractional cover to random finite covers by a compact rounding argument. For metric-tree balls, Tamir's balancedness theorem and standard balanced-matrix ideality provide the finite-dimensional integrality input.

We also prove an arbitrary-budget duality criterion. If $0<R\le L$ and $\beta$ is a finite positive Borel measure on $[0,R]$, then $\beta$ dominates the expected radius measure of a random $0$-good cover if and only if $\sigma(T)\le\int_{[0,R]}\max_{v\in T}\sigma(B_T(v,s))\,d\beta(s)$ for every finite positive Borel measure $\sigma$ on $T$; it is enough to test finite atomic measures. We use this criterion to extend the uniform range to every $r\le L-\operatorname{diam}(T)/2$, determine the exact range for equal-arm metric stars, and derive deterministic bounds, interval rigidity, and a diameter-defect stability estimate.
\end{abstract}

\noindent\textbf{Keywords:} Graph burning; metric tree; random cover; balanced matrix; set-covering polyhedron; fractional duality.

\noindent\textbf{2020 Mathematics Subject Classification:} 05C57, 05C05, 05C70, 90C27.

\section{Introduction}

Graph burning is a discrete-time model for the spread of information or contagion through a network. At each round one new source is chosen, while every earlier fire spreads across one edge. For a vertex $x$ of a graph $G$, let $B_G(x,t)$ be the set of vertices at distance at most $t$ from $x$. A sequence $x_1,\ldots,x_k$ burns $G$ in $k$ rounds if and only if $V(G)=\bigcup_{i=1}^k B_G(x_i,k-i)$. The burning number $b(G)$ is the least such $k$.

An equivalent transmission process on the hypercube appeared in the work of Alon~\cite{Alon}; Norin and Turcotte~\cite[Section~1]{NorinTurcotte} explain this connection. Bonato et al.~\cite{BonatoJanssenRoshanbin2014,BonatoJanssenRoshanbin2016} later introduced the term graph burning and studied the process as a model of social contagion. Bonato et al.~\cite{BonatoJanssenRoshanbin2016} proved $b(P_n)=\lceil\sqrt n\rceil$, and they~\cite[Corollary~2.5]{BonatoJanssenRoshanbin2016} also showed that, for every connected graph $G$, $b(G)=\min\{b(T):T\text{ is a spanning tree of }G\}$. These facts led to the central conjecture of the subject.

\begin{conjecture}[Burning Number Conjecture~\cite{BonatoJanssenRoshanbin2016}]\label{conj:burning}
If $G$ is a connected graph on $n$ vertices, then $b(G)\le\lceil\sqrt n\rceil$.
\end{conjecture}

Paths show that the bound in \cref{conj:burning} would be best possible. The general upper bound has been improved several times. Bonato et al.~\cite{BonatoJanssenRoshanbin2016} proved $b(G)\le2\lceil\sqrt n\rceil-1$. Bessy et al.~\cite{BessyEtAl} improved this to $b(G)\le\sqrt{12n/7}+3$. Land and Lu~\cite{LandLu} then obtained $b(G)\le\lceil(\sqrt{24n+33}-3)/4\rceil$. Bastide et al.~\cite{BastideEtAl} later proved $b(G)\le\sqrt{4n/3}+1$.

The conjectured bound is known for several graph classes. Bonato and Lidbetter~\cite{BonatoLidbetter} and Tan and Teh~\cite{TanTeh} studied spiders and path forests, while Das et al.~\cite{DasEtAl} studied spiders. Hiller et al.~\cite{HillerKosterTriesch} treated $p$-caterpillars, while Liu et al.~\cite{LiuHuHu} proved the conjectured bound for caterpillars. Murakami~\cite{Murakami} proved the conjecture for trees without degree-two vertices. Ning et al.~\cite{NingJinZhang} later proved it for every tree of order $n$ with at most $\lfloor\sqrt{n-1}\rfloor$ degree-two vertices. Omar and Rohilla~\cite{OmarRohilla} obtained further results for several graph classes. Bessy et al.~\cite{BessyHard} proved that the decision problem is NP-complete for acyclic graphs of maximum degree three, spiders, and path forests. Mitsche et al.~\cite{MitschePralatRoshanbin} studied probabilistic forms of graph burning. Bonato~\cite{BonatoSurvey} gave a survey of the subject.

Norin and Turcotte~\cite{NorinTurcotte} proved the asymptotically sharp general bound $b(G)\le(1+o(1))\sqrt n$ for every connected graph of order $n$. Their proof starts with the spanning-tree reduction and then passes from a finite tree to a metric tree. This removes integer scaling from the covering problem and allows the radii to be chosen at random.

We recall the metric setting. Let $T$ be a finite metric tree, and let $|T|$ be its total edge length. A tuple $\mathbf r=(r_1,\ldots,r_m)$ is a cover of $T$ if suitable centers $v_1,\ldots,v_m\in T$ satisfy $T\subseteq\bigcup_i B_T(v_i,r_i)$. It is $\ell$-good if $\sum_i r_i\le |T|+\ell$, and $\C(T,\ell)$ denotes the set of all such covers. If $\nu$ is a probability measure on covers, its expected radius measure is defined by $E_\nu(A)=\int |\{i:r_i\in A\}|\,d\nu(\mathbf r)$ for every Borel set $A\subseteq\R_{\ge0}$. For $0\le a<b$, let $U[a,b]$ be the uniform probability measure on $[a,b]$.

Norin and Turcotte~\cite[Theorem~4.1]{NorinTurcotte} proved that, for $\varepsilon,r>0$ and $|T|\ge24\varepsilon^{-1}r$, there is a probability measure $\nu$ on $\C(T,r)$ such that $E_\nu\le(1+\varepsilon)(|T|/r)U[0,r]$. This is the main continuous statement used in their proof of the asymptotic bound. It has three losses: the factor $1+\varepsilon$, the allowance $\sum_i r_i\le |T|+r$, and the lower bound $|T|\ge24\varepsilon^{-1}r$.

The exact target $\frac{|T|}{r}U[0,r]$ is natural. Its total mass is $|T|/r$, and its first moment is $|T|/2$. The latter value is forced already by an interval: balls of radii $r_1,\ldots,r_m$ cover an interval of length at most $2\sum_i r_i$. Norin and Turcotte~\cite[Section~6.1]{NorinTurcotte} also observed that the exact conclusion fails for an interval when $|T|<2r$. They conjectured that this is the only obstruction and stated the following as Conjecture~6.1.

\begin{conjecture}[Norin--Turcotte, Conjecture~6.1~\cite{NorinTurcotte}]\label{conj:norin-turcotte}
Let $r>0$ and let $T$ be a finite metric tree with $|T|\ge2r$. Then there is a probability measure $\nu$ on $\C(T,0)$ such that $E_\nu\le(|T|/r)U[0,r]$.
\end{conjecture}

Norin and Turcotte~\cite[Section~6.1]{NorinTurcotte} reported that they had verified \cref{conj:norin-turcotte} for metric trees with at most three leaves. They also reduced the conjecture to trees with no two-piece decomposition $T=T_1\cup T_2$ such that $|T_1|,|T_2|\ge2r$, and suggested extending their local two-ball construction beyond the minimal case. We address the remaining passage from at most three leaves to arbitrary finite metric trees.

Our main theorem resolves their conjecture.

\begin{theorem}\label{thm:main}
Let $r>0$ and let $T$ be a finite metric tree of length $L=|T|\ge2r$. Then there is a probability measure $\nu$ on $\C(T,0)$ such that $E_\nu\le (L/r)U[0,r]$.
\end{theorem}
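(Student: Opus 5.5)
Our plan follows the architecture in the abstract: first build a \emph{fractional} cover with the exact radius budget, then round it to a random finite cover.

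\textbf{Step 1: fractional marked-ball covers.} A fractional marked-ball cover is a finite positive measure $\mu$ on pairs $(v,s)\in T\times[0,r]$ with two properties:
\begin{itemize}
\item[(i)] every point $x\in T$ has $\mu\{(v,s):x\in B_T(v,s)\}\ge1$;
\item[(ii)] the radius marginal of $\mu$ is at most $(L/r)\,U[0,r]$.
\end{itemize}
We construct such a $\mu$ by strong induction on the number of edges and on $\lfloor L/r\rfloor$.

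In the base case $2r\le L<4r$, the tree has no two-piece decomposition into parts of length $\ge2r$. Here we use the Norin--Turcotte bootstrapping calculation, recast as a zero-error replacement certificate. A ball of radius $s$ is replaced by a distribution over balls of smaller radii, with the same total first moment, that still covers fractionally. This is the local scale reduction: it lets us pass from radius scale $r$ to any smaller scale without loss.

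For general $T$, we cut $T$ into three pieces $T_1,T_2,T_3$. Each piece is a subtree of length $\ge2r$ (or a connected union meeting at a branch point), and the lengths add to $L$. The cut is chosen so that each piece either falls into the base range or is handled recursively. This is the macro-recursion. Since the budgets $(|T_i|/r)U[0,r]$ add exactly to $(L/r)U[0,r]$, the recursion loses nothing.

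The delicate case is when no such split exists, for example a spider whose arms are all shorter than $2r$. There we use the replacement certificate to shrink some radii so that a ball centred at the branch point absorbs the arm-ends. We expect this three-piece decomposition with small arms to be the main obstacle. We would first try an averaging argument over the position of the cut point, parametrised by arc length, to obtain the exact uniform marginal.

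\textbf{Step 2: rounding to random finite covers.} Discretise: take a finite set $P\subset T$ containing all vertices and a fine grid, and finitely many candidate balls. The covering matrix $A$ has rows indexed by $P$ and columns indexed by the balls $B_T(v,s)$. Balls in a tree form a family whose incidence matrix is balanced, by Tamir's theorem. A balanced matrix is ideal, so the covering polyhedron $\{y\ge0: Ay\ge\one\}$ has integral vertices.

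The fractional cover restricted to the grid is a point of this polyhedron. It is therefore a convex combination of integral covers plus a nonnegative excess, which we discard since it only lowers the radius measure. The resulting random integer cover has expected radius measure bounded by the fractional marginal, so it is at most $(L/r)U[0,r]$.

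It remains to check two things. First, each integral cover covers all of $T$, not only $P$; we arrange this by taking centres and radii from the grid, so that coverage of grid points and vertices implies coverage of each segment. Second, each integral cover is $0$-good. This does not follow from linearity of expectation alone. We intend to handle it by imposing the constraint $\sum r_i\le L$ pathwise, taking an extreme-point decomposition and pruning any cover that exceeds the bound. We then let the grid mesh tend to $0$ and take a weak limit, using compactness of probability measures on covers with boundedly many balls and closedness of $\C(T,0)$.
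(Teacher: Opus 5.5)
Your Step 1 has a genuine gap. The induction you describe is not well-founded, and the case you flag as the main obstacle is exactly where the real ideas are needed.

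A replacement certificate (\cref{lem:exact-replacement}) does not swap a ball for smaller balls. It spends the budget $\Lam{T}{r}$ as a resolved measure plus weighted copies $w_j\Lam{T}{s_j}$ of the \emph{same} tree at specific smaller scales $s_j$, and those subproblems must still be solved. They have larger $L/s_j$, so your base case $2r\le L<4r$ calls instances above it in your induction on $\lfloor L/r\rfloor$. Splitting those instances returns subtrees that can have the same number of leaves (always, for an interval), and these can land back in the base case, so the regress never closes. The paper needs three things you do not have:
\begin{enumerate}
\item Every unresolved scale satisfies $s_j\le\lambda(T)/2$ (\cref{prop:local-replacement}). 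Hence the next call at scale $s_j$ is a split case and never another replacement.
\item When the optimal split is non-balanced, write $T=T_1\cup T_2\cup T_3$ at one point with $|T_i|\le b$. Every pairwise union then has length at least $\lambda$ and fewer leaves, and the half-sum $\frac12(\xi_{12}+\xi_{13}+\xi_{23})$ has budget exactly $\Lam{T}{r}$ because each $T_i$ is counted twice (\cref{prop:three-piece,prop:half-sum}). This, not concatenation of pieces of length at least $2r$ or averaging over a cut point, is what handles your spider with short arms.
\item Balanced splits may retain children with the same leaf count. So instead of a finite induction, the paper runs an infinite recursion in which retained children have at most half the parent's length. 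The unresolved weight then satisfies $2W_n\le(L/r^2)\min\{r,L/2^{n+1}\}\to0$, and the increasing resolved measures converge to an exact fractional cover (\cref{lem:macro-expansion,prop:fractional-main}). Induction is only on the number of leaves.
\end{enumerate}

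Step 2 also has a gap at the $0$-good requirement. Imposing $\sum_i r_i\le L$ as a constraint takes you outside the set-covering polyhedron of a balanced matrix, so integrality is lost. Discarding the offending integral covers from the mixture breaks the intensity bound, and deleting balls from them generally breaks coverage. The missing idea is that radii may be \emph{recentred}. By the sum-plus-maximum lemma (\cref{lem:sum-plus-max}), any radius list with $|T|\le\sum_i r_i+\max_i r_i$ is a cover. So you sort decreasingly and keep the shortest prefix with $2s_1+\sum_{i=2}^k s_i\ge L$. Whenever all radii are at most $L$, this gives a cover with $\sum_{i\le k}s_i\le L$ (\cref{lem:trimming}). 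Since the operation only deletes radii, the expected radius measure can only drop.

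Your discretisation also needs repair. A measure on $T\times[0,r]$ is not a point of a polyhedron indexed by grid balls unless you move centres or radii, which either changes the radius marginal or loses coverage. Moreover, coverage of grid points does not give coverage of the segments between them. The paper instead does the following:
\begin{enumerate}
\item It groups marked balls into classes $Q_I$ by their incidence with a finite demand set and rounds the vector $z_j=\xi(Q_j)$.
\item It resamples each chosen ball from $\xi|_{Q_j}/z_j$, so the intensity stays below $\xi$.
\item It passes to all of $T$ by tightness (only the expected number of balls is bounded, not the number itself) and by closedness of the coverage events (\cref{lem:finite-demand-rounding,thm:compact-rounding}).
\end{enumerate}
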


We organize the proof in three steps. First, we prove a metric sum-plus-maximum covering lemma, in the spirit of Land and Lu~\cite[Lemma~1]{LandLu} and Norin and Turcotte~\cite[Lemma~3.2]{NorinTurcotte}, and use it to obtain a largest-first trimming rule. Thus we can impose the pointwise condition $\sum_i r_i\le L$ after controlling the expected radius measure.

Second, we separate rounding from the geometry of the tree. For a closed incidence relation between compact demand and object spaces, we show that finite ideality permits a fractional cover to be rounded to a random finite cover without increasing its intensity. We first round finitely many demands and then use tightness and weak convergence. For metric-tree balls, Tamir~\cite[Theorem~1]{Tamir} supplies balancedness, while Cornu{\'e}jols~\cite[Theorem~6.16]{Cornuejols} supplies the required covering ideality; see also Fulkerson, Hoffman, and Oppenheim~\cite{FulkersonHoffmanOppenheim}. Our compact formulation is tailored to coordinatewise intensity domination. Related infinite covering and duality frameworks appear in Aharoni and Holzman~\cite{AharoniHolzman} and Rademacher, Toriello, and Vielma~\cite{RademacherTorielloVielma}.

Third, we construct the fractional marked-ball cover. Norin and Turcotte~\cite[Lemma~3.3 and proof of Theorem~4.1]{NorinTurcotte} provide the local two-ball family and the underlying bootstrapping algebra. We recast that algebra as a zero-error replacement certificate and prove that every unresolved scale is at most one half of the optimal-split parameter. We then combine balanced splits with a three-piece reduction, modeled on the optimal-split argument in the proof of their Theorem~6.4, to obtain a macro-recursion. The recursion preserves coverage and the radius budget, while the unresolved subtrees decrease geometrically in length. Their total weight therefore tends to zero; see \cref{rem:finite-depth}.

Thus the cited inputs are the metric-tree decomposition framework and local construction of Norin and Turcotte, Tamir's balancedness theorem, and classical balanced-matrix ideality. The new part of the proof is the zero-error certificate packaging, the uniform local scale reduction, the three-piece macro-recursion, and the resulting exact fractional cover for arbitrary finite metric trees. The compact rounding theorem is a self-contained formulation of the limiting step needed here; we do not claim a new general theory of infinite covering.

We also prove a radius-budget duality specialized to metric-tree balls. General fractional covering duality for infinite hypergraphs has a substantial literature; see Aharoni and Holzman~\cite{AharoniHolzman} and Rademacher, Toriello, and Vielma~\cite{RademacherTorielloVielma}. In our setting, a finite positive measure $\beta$ on $[0,R]$ is feasible exactly when $\sigma(T)\le\int_{[0,R]}\max_{v\in T}\sigma(B_T(v,s))\,d\beta(s)$ for every finite positive demand measure $\sigma$, and it is enough to test finite atomic measures.

We use the exact framework to derive several consequences. If $D=\diamT(T)$, the uniform conclusion holds for $0<r\le L-D/2$, and this extends $r\le L/2$ whenever $T$ is not an interval. We determine the exact range for equal-arm metric stars, characterize the one-ball regime, and derive deterministic, obstruction, rigidity, and stability statements. The interval threshold itself is already implicit in Norin and Turcotte's interval obstruction and their verification for trees with at most three leaves~\cite[Section~6.1]{NorinTurcotte}; our contribution there is the measure and geometric rigidity statement in \cref{thm:interval-rigidity}.

Theorem~\ref{thm:main} is an exact result for metric trees. It does not by itself prove \cref{conj:burning}. In the transfer from a metric tree to a discrete tree, a center may lie inside an edge, and the discretization of Norin and Turcotte~\cite[Lemma~5.4]{NorinTurcotte} enlarges radii. Thus the continuous loss is removed, while the final discrete step remains a separate problem.

The paper has five sections. \Cref{sec:preliminaries} develops the metric-tree preliminaries, the strengthened trimming lemma, and compact ideal-cover rounding. \Cref{sec:local} proves the exact local replacement statements and completes the recursive proof of \cref{thm:main}. \Cref{sec:duality} establishes budgeted random--fractional duality and the concentration criterion. \Cref{sec:consequences} develops volume and packing obstructions, structural extensions, exact thresholds, deterministic consequences, rigidity, and stability.

\section{Preliminaries and compact rounding}\label{sec:preliminaries}
\subsection{Metric-tree notation and trimming}

\begin{definition}
A \emph{finite metric tree} is the metric realization of a finite combinatorial tree whose edges have positive lengths; models that differ only by edge subdivision represent the same metric tree. A \emph{finite combinatorial model} of $T$ is any such weighted combinatorial tree with metric realization $T$. We also allow the one-point tree and assign it length zero. The total edge length of $T$ is denoted by $|T|$.

For $x\in T$, the degree $\deg_T(x)$ is the number of components of $T\setminus\{x\}$. A point of degree one is a \emph{leaf}, a point of degree at least three is a \emph{branch point}, and $L(T)$ denotes the set of leaves. For $x,y\in T$, the unique arc joining them is denoted by $T[x,y]$ and has length $d_T(x,y)$. For $v\in T$ and $s\ge0$, the closed metric ball is $B_T(v,s)=\{x\in T:d_T(x,v)\le s\}$. If $A\subseteq S\subseteq T$, then $\cl_S(A)$ denotes the closure of $A$ in $S$.

For a nontrivial metric tree $S$, its diameter is $\diamT(S)=\max_{x,y\in S}d_S(x,y)$, the eccentricity of $v\in S$ is $\max_{x\in S}d_S(v,x)$, and its metric radius is $\rho(S)=\min_{v\in S}\max_{x\in S}d_S(v,x)$. A point attaining this minimum is a \emph{center} of $S$.
\end{definition}

\begin{definition}
A \emph{metric subtree} of $T$ is a nonempty closed connected subset of $T$. A finite family $\{T_1,\ldots,T_k\}$ of metric subtrees is a \emph{decomposition} of $T$ if $T=\bigcup_iT_i$ and $|T|=\sum_i|T_i|$. A \emph{two-piece decomposition} is a decomposition $T=T_1\cup T_2$ with $T_1\cap T_2$ consisting of one point. For a proper metric subtree $J\subsetneq T$, write $\overline J=\cl_T(T\setminus J)$.
\end{definition}

Every metric subtree is geodesically convex. Hence, if $S\subseteq T$ is a metric subtree, $v\in S$, and $s\ge0$, then $B_S(v,s)=S\cap B_T(v,s)$. We use the following decomposition facts from Norin and Turcotte~\cite[Section~2]{NorinTurcotte}.

\begin{lemma}[Norin--Turcotte, Section~2~\cite{NorinTurcotte}]\label{lem:components-at-point}
Let $S$ be a metric subtree and let $v\in S$. If $C$ is a component of $S\setminus\{v\}$, then $C\cup\{v\}$ is a metric subtree. The same is true for the union of $\{v\}$ with the closures of any set of components of $S\setminus\{v\}$.
\end{lemma}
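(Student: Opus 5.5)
The statement is a point-set topological fact about finite metric trees, so I will prove it by working in a finite combinatorial model of $S$ in which $v$ is a vertex. If $v$ lies in the interior of an edge, I subdivide that edge at $v$; this is allowed because models differing by subdivision represent the same metric tree. Since $S$ is a closed connected subset of $T$, $S$ is itself a finite metric tree. Indeed, $S$ is geodesically convex, so it is a union of finitely many arcs meeting each edge of a model of $T$ in a closed subinterval. This gives a finite model of $S$, with $v$ as a vertex after subdivision.

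\textbf{Identifying the components.} In this model, let $e_1,\dots,e_d$ be the edges incident to $v$, so that $d=\deg_S(v)$. Deleting $v$ from the combinatorial tree leaves $d$ subtrees $S_1,\dots,S_d$, where $S_j$ is the branch containing the far endpoint of $e_j$. I claim the components of $S\setminus\{v\}$ are exactly the sets $C_j=(\text{realization of } e_j\cup S_j)\setminus\{v\}$. Each $C_j$ is path-connected, since any point of $C_j$ is joined to the far endpoint of $e_j$ inside $C_j$. Moreover, two points in different $C_j$ cannot be joined in $S\setminus\{v\}$. To see this, note that the unique arc between them in the tree $S$ passes through $v$. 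Any path joining them contains that arc, because in a tree every path between two points contains the geodesic arc.

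\textbf{Subtree property.} The closure of $C_j$ in $S$ is $C_j\cup\{v\}$, and this is the metric realization of the finite combinatorial tree $e_j\cup S_j$. It is therefore closed (it is compact), connected, and nonempty, so it is a metric subtree. For an arbitrary set $J$ of components, $\{v\}\cup\bigcup_{j\in J}\cl_S(C_j)$ is a finite union of compact connected sets that all contain $v$. Hence it is compact and connected. Closedness holds because there are only finitely many components, as $d$ is finite. If $J=\emptyset$, the set is the one-point tree $\{v\}$, which the definitions allow.

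\textbf{Main obstacle.} The only delicate step is justifying that a closed connected subset of a finite metric tree admits a finite model, so that $\deg_S(v)$ is finite and the components are the branches described above. I would handle this using convexity. For any two points of $S$, the arc between them lies in $S$, because connected subsets of a tree (an $\mathbb R$-tree) are arcwise convex. Consequently $S$ meets each edge of a model of $T$ in a closed interval, and the model of $S$ is obtained by truncating finitely many edges. Everything else is routine.
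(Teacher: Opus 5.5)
Your proof is correct. It is not a variant of an in-paper argument, because the paper gives none: it imports the lemma from Norin--Turcotte, Section~2, and records only that metric subtrees are geodesically convex. So your proposal replaces a citation with a self-contained proof.

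Your route is combinatorial. Convexity of connected sets gives $S$ a finite model, and subdividing at $v$ identifies the components of $S\setminus\{v\}$ with the finitely many pieces $C_j$. Compactness of the realizations of $e_j\cup S_j$ then gives closedness, with finiteness of $\deg_S(v)$ handling the union claim. This yields an explicit description of the components, at the cost of the model-building step you flag.

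One step should be tightened. You show that each $C_j$ is path-connected and that no \emph{path} in $S\setminus\{v\}$ joins two different $C_j$; that identifies the path components. To conclude that the $C_j$ are the connected components, you also need that no connected subset of $S\setminus\{v\}$ meets two of them. There are two easy ways to get this:
\begin{enumerate}
\item Use the convexity fact you already invoke: a connected set containing points of $C_i$ and $C_j$ contains the arc between them, and that arc passes through $v$.
\item Note that the $C_j$ are finitely many, pairwise disjoint, and each is closed in $S\setminus\{v\}$ because $C_j\cup\{v\}$ is compact. Hence each is also open there.
\end{enumerate}

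For comparison, a purely topological argument avoids the finite model. $S$ is locally connected, since each $S\cap B_T(x,\varepsilon)$ is convex and hence connected. So every component $C$ of $S\setminus\{v\}$ is open in $S$ and clopen in $S\setminus\{v\}$, and connectedness of $S$ forces $\cl_S(C)=C\cup\{v\}$. For any family $\mathcal J$ of components, $\{v\}\cup\bigcup_{C\in\mathcal J}\cl_S(C)=S\setminus\bigcup_{C\notin\mathcal J}C$. This set is closed as the complement of an open set, and connected as a union of connected sets through $v$. That route needs no finiteness of the degree, while yours gives the concrete structure (finitely many explicit pieces) that the paper relies on later.
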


\begin{lemma}[Norin--Turcotte, Section~2~\cite{NorinTurcotte}]\label{lem:branch-boundary}
If $J$ and $\overline J$ are nontrivial metric subtrees, then $J\cap\overline J$ consists of one point. Hence $\{J,\overline J\}$ is a two-piece decomposition.
\end{lemma}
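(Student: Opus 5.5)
We must prove \cref{lem:branch-boundary}: if $J\subsetneq T$ is a metric subtree and $\overline J=\cl_T(T\setminus J)$, and both are nontrivial metric subtrees, then $J\cap\overline J$ is a single point. Consequently $T=J\cup\overline J$ with $|T|=|J|+|\overline J|$.

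The plan is to work in a finite combinatorial model of $T$, subdivided so that every boundary point of $J$ is a vertex. This is possible because $J$ is closed and connected in a finite metric tree, so it is a finite union of closed subarcs of edges and its topological boundary is finite. With this model, $J$ and $\overline J$ are unions of closed edges, and $\overline J$ is the union of those edges not contained in $J$. Two facts follow. First, $J\cup\overline J=T$. Second, $J\cap\overline J$ is exactly the set of points lying both on an edge of $J$ and on an edge outside $J$, hence a finite set of vertices. In particular the intersection has measure zero, which gives $|T|=|J|+|\overline J|$ once the intersection is known to be nonempty.

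\emph{Nonemptiness.} $T$ is connected and equals the union of the two nonempty closed sets $J$ and $\overline J$. Hence they must meet.

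\emph{At most one point.} Suppose $x\neq y$ both lie in $J\cap\overline J$.
\begin{enumerate}
\item Since $J$ is a metric subtree and therefore geodesically convex, the arc $T[x,y]$ lies in $J$.
\item Since $\overline J$ is a connected closed subset, and we assume it is a metric subtree, it is also geodesically convex, so $T[x,y]\subseteq\overline J$.
\item Thus the nondegenerate arc $T[x,y]$ has positive length and lies in $J\cap\overline J$.
\end{enumerate}
This contradicts the finiteness (equivalently, measure zero) of $J\cap\overline J$ established above. So the intersection is exactly one point, and $\{J,\overline J\}$ is a two-piece decomposition by definition.

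The step needing most care is the claim that $J\cap\overline J$ contains no arc. Concretely, one must check that an interior point $z$ of an edge in $J$ cannot lie in the closure of $T\setminus J$. This holds because a small open neighbourhood of $z$ is contained in that edge, hence in $J$, so $z$ is not a limit of points of $T\setminus J$. Making this rigorous only requires the subdivision choice above. If one prefers to avoid models, the same argument can be phrased directly: the interior of $J$ relative to $T$ is disjoint from $\overline J$, and $J$ minus its finite boundary is that interior.
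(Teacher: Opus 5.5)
Your proof is correct. The paper gives no argument of its own for this lemma; it imports it from Norin and Turcotte's Section~2. Your subdivision-plus-convexity argument therefore serves as a self-contained proof, not a variant of something in the text.

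Three small points about what it relies on.

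First, the claim that $J\cap e$ is a closed subinterval of each edge $e$ needs an explicit justification. This is what makes the boundary of $J$ finite and the subdivision possible. The justification is the geodesic convexity of metric subtrees, which the paper records just before this lemma.

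Second, the assertion that $J$ becomes a union of closed edges uses that $J$ is connected with positive length. That rules out isolated vertices of $J$.

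Third, the hypothesis carrying the weight is the connectedness of $\overline J$ in your step~2, and it cannot be dropped. For $J=[1,2]\subseteq T=[0,3]$ one gets $\overline J=[0,1]\cup[2,3]$ and $J\cap\overline J=\{1,2\}$.

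As a minor simplification, the identity $|T|=|J|+|\overline J|$ follows from the edge partition alone. It does not need the intersection to be known nonempty.
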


\begin{definition}
A proper metric subtree $J\subsetneq T$ for which both $J$ and $\overline J$ are nontrivial is a \emph{branch} of $T$. By \cref{lem:branch-boundary}, the unique point of $J\cap\overline J$ is its \emph{anchor}.
\end{definition}

\begin{definition}
Let $m\ge1$. A radius tuple $\mathbf r=(r_1,\ldots,r_m)\in\R_{\ge0}^m$ is a \emph{cover} of $T$ if there are centers $v_1,\ldots,v_m\in T$ such that $T\subseteq\bigcup_{i=1}^mB_T(v_i,r_i)$. Let $\C_m(T)$ be the set of all such tuples, put $\C_0(T)=\varnothing$, and use the disjoint union $\C(T)=\bigsqcup_{m\ge0}\C_m(T)$. For $\ell\in\R$, a cover is $\ell$-good if $\sum_i r_i\le |T|+\ell$, and $\C(T,\ell)$ denotes the set of $\ell$-good covers.

If $\nu$ is a finite Borel measure on $\C(T)$, its \emph{expected radius measure} is the possibly infinite Borel measure $E_\nu$ on $\R_{\ge0}$ given by $E_\nu(A)=\int_{\C(T)}|\{i:r_i\in A\}|\,d\nu(\mathbf r)$ for every Borel set $A$. For positive Borel measures $\mu$ and $\nu$ on the same space, $\mu\le\nu$ means $\mu(A)\le\nu(A)$ for every Borel set $A$.

For $0\le a<b$, let $U[a,b]$ be the uniform probability measure on $[a,b]$. For a finite positive measure $\mu$ on $\R_{\ge0}$, set $\wgt(\mu)=2\int_0^\infty s\,d\mu(s)\in[0,\infty]$; in particular, $\wgt(U[a,b])=a+b$. For $r>0$, define $\Lam{T}{r}=(|T|/r)U[0,r]$.
\end{definition}

For every $m\ge1$, the set $\C_m(T)$ is closed in $\R_{\ge0}^m$. Indeed, if $\mathbf r^{(n)}\to\mathbf r$ and the covers $\mathbf r^{(n)}$ have centers $\mathbf v^{(n)}\in T^m$, then a convergent subsequence of the centers gives centers for $\mathbf r$. We use the disjoint-union Borel structure on $\C(T)$. Each layer $\C_m(T,\ell)$ is closed, so $\C(T,\ell)$ is Borel. Countable additivity of $E_\nu$ follows from monotone convergence.

\begin{lemma}[Norin--Turcotte, Lemma~3.1~\cite{NorinTurcotte}]\label{lem:tree-radius}
Every nontrivial finite metric tree satisfies $\rho(S)=\diamT(S)/2$.
\end{lemma}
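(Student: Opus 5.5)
We must prove $\rho(S)=\diamT(S)/2$ for a nontrivial finite metric tree $S$. The plan is to prove the two inequalities separately, using only the fact that arcs in a tree are unique and that every geodesic from a point into a diametral path meets it at a single point.

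\emph{Lower bound.} Fix a diametral pair $a,b\in S$ with $d_S(a,b)=D:=\diamT(S)$; it exists by compactness. For any $v\in S$, the triangle inequality gives
\[
D\le d_S(a,v)+d_S(v,b)\le 2\max_{x\in S}d_S(v,x).
\]
Taking the minimum over $v$ yields $\rho(S)\ge D/2$.

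\emph{Upper bound.} Let $c$ be the midpoint of the arc $S[a,b]$, so $d_S(a,c)=d_S(c,b)=D/2$. We claim that every $x\in S$ satisfies $d_S(c,x)\le D/2$. Let $p$ be the point where the arc $S[x,c]$ first meets $S[a,b]$; this is the projection of $x$ onto the convex set $S[a,b]$, and it is unique by uniqueness of arcs. Without loss of generality $p$ lies on $S[c,b]$; otherwise swap the roles of $a$ and $b$. Then $S[x,a]$ is the concatenation of $S[x,p]$, $S[p,c]$ and $S[c,a]$, since the arc from $x$ to $a$ passes through $p$. Hence
\[
d_S(x,a)=d_S(x,p)+d_S(p,c)+D/2.
\]
Since $d_S(x,a)\le D$, we get $d_S(x,p)+d_S(p,c)\le D/2$. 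But $d_S(c,x)\le d_S(c,p)+d_S(p,x)$, and in fact equality holds. Thus $d_S(c,x)\le D/2$, so the eccentricity of $c$ is at most $D/2$ and $\rho(S)\le D/2$.

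The only point needing care is justifying that the arc from $x$ to $a$ passes through $p$, i.e.\ the concatenation step. In a metric tree, the union of $S[x,p]$ and $S[p,a]$ is an arc, because the two pieces meet only at $p$ by the choice of $p$ as the first hitting point. Uniqueness of arcs then forces this union to equal $S[x,a]$. This is the main (mild) obstacle; everything else is the triangle inequality. Combining the two bounds gives $\rho(S)=\diamT(S)/2$, and $c$ is a center.
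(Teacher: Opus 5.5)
Your proof is correct. It is the standard argument: take the midpoint of a diametral arc, and use the first-hitting-point concatenation to bound every distance from it by $D/2$; the lower bound is the triangle inequality. The paper gives no proof of its own and simply cites Norin--Turcotte, Lemma~3.1, which rests on this same midpoint argument.
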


The assertion is trivial for the one-point tree. By \cref{lem:tree-radius}, every nontrivial metric tree $S$ lies in one ball of radius $\diamT(S)/2$. Since $\diamT(S)\le |S|$, one ball of radius $a$ covers every tree of length at most $2a$.

\begin{definition}\label{def:minimal-tree}
Following Norin and Turcotte~\cite[Section~2]{NorinTurcotte}, a metric tree $S$ is \emph{$a$-minimal}, for $a>0$, if $|S|\ge a$ and $S$ has a two-piece decomposition into subtrees of length at most $a$.
\end{definition}

We use the following direct consequence of Norin and Turcotte's minimal-piece and one-ball lemmas.

\begin{corollary}[Norin--Turcotte, Lemmas~2.4 and~3.1~\cite{NorinTurcotte}]\label{cor:removable-piece}
Let $a>0$ and let $T$ be a metric tree with $|T|\ge a$. Then $T$ has a two-piece decomposition $T=T_0\cup T_1$ such that $a\le |T_1|\le2a$. In particular, one ball of radius $a$ covers $T_1$.
\end{corollary}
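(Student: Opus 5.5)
The plan is to obtain \cref{cor:removable-piece} from Norin and Turcotte's minimal-piece lemma, by locating an $a$-minimal branch hanging off the rest of $T$. Then the one-ball statement follows from \cref{lem:tree-radius}. I will also include a self-contained descent argument, in case the cited lemma is stated in a slightly different form.

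\emph{Trivial case.} If $|T|\le 2a$, take $T_1=T$ and let $T_0$ be a single point of $T$, for instance a leaf. This is a degenerate two-piece decomposition, which the corollary permits as stated (the paper allows one-point trees). Then $a\le|T_1|\le 2a$ holds.

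\emph{Main case $|T|>2a$.} Root $T$ at a leaf. For each point $x$ consider the subtrees hanging below $x$, and let $\phi(x)$ be the total length below $x$. Choose a point $x$ that is farthest from the root among those with $\phi(x)\ge a$. Such a point exists, because $\phi$ is upper semicontinuous along each downward arc and there are finitely many arcs.

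Let $C_1,\dots,C_k$ be the closures of the components below $x$, each with $x$ adjoined. These are metric subtrees by \cref{lem:components-at-point}. Each $C_j$ has length at most $a$: if some $|C_j|>a$, then moving slightly down into $C_j$ keeps $\phi\ge a$, which contradicts the choice of $x$. Since $\phi(x)=\sum_j|C_j|\ge a$, greedily add the $C_j$ until the accumulated length first reaches at least $a$. The last addition contributes at most $a$, so the union $T_1$ satisfies $a\le|T_1|\le 2a$. Now set $T_0=\cl_T(T\setminus T_1)$. This is the union of $x$ with the remaining components of $T\setminus\{x\}$, so it is a metric subtree by \cref{lem:components-at-point}. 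We have $T_0\cap T_1=\{x\}$, and the lengths add. Hence $T=T_0\cup T_1$ is a two-piece decomposition.

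\emph{The ball.} By \cref{lem:tree-radius}, $T_1$ lies in the ball of radius $\diamT(T_1)/2\le|T_1|/2\le a$ around its center. Since $B_{T_1}=T_1\cap B_T$ by convexity, one ball of radius $a$ in $T$ covers $T_1$.

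The main obstacle is the existence of the deepest point $x$ with $\phi(x)\ge a$. I will handle it by noting that $\phi$ is continuous and decreasing along each edge, apart from jumps at branch points where $\phi$ is the sum over the children. The set $\{\phi\ge a\}$ is therefore a closed subtree containing the root. A point of this subtree at maximal distance from the root then works, because any child component $C_j$ with $|C_j|>a$ would allow the subtree to extend further down, contradicting maximality.
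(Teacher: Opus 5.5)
Your proposal is correct and follows the paper's route. The paper gives no separate argument: it derives the statement directly from Norin and Turcotte's minimal-piece lemma (their Lemma~2.4, which supplies a removable piece of length between $a$ and $2a$), their one-ball lemma (\cref{lem:tree-radius} here), and the bound $\diamT(T_1)\le|T_1|$.

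Your deepest-point descent, with greedy grouping of the child components at $x$, is a valid self-contained proof of the cited extraction step. Upper semicontinuity of $\phi$ makes $\{\phi\ge a\}$ closed, so the deepest point exists. The degenerate split with a one-point $T_0$ when $|T|\le 2a$ is allowed by the paper's definition of a two-piece decomposition.
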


Norin and Turcotte~\cite[Lemma~3.2]{NorinTurcotte} proved the conclusion below under the stronger hypothesis $|T|\le\sum_i r_i$, while Land and Lu~\cite[Lemma~1]{LandLu} proved the corresponding sum-plus-maximum statement for discrete trees. We use the following metric analogue.

\begin{lemma}[Metric analogue of Land--Lu, Lemma~1~\cite{LandLu}]\label{lem:sum-plus-max}
Let $m\ge1$, let $r_1,\ldots,r_m\ge0$, and put $S=\sum_i r_i$ and $M=\max_i r_i$. If $|T|\le S+M$, then $(r_1,\ldots,r_m)$ is a cover of $T$.
\end{lemma}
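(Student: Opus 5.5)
We argue by induction on $m$, peeling off one ball at a time with \cref{cor:removable-piece}. Order the radii so that $r_1\ge r_2\ge\cdots\ge r_m$, so that $M=r_1$. The base case $m=1$ reads $|T|\le 2r_1$. This is exactly the remark after \cref{lem:tree-radius}: one ball of radius $r_1$ covers every tree of length at most $2r_1$. Zero radii may be discarded unless all radii vanish. If they all vanish, then $|T|=0$, $T$ is a point, and any single center works.

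For the induction step let $m\ge2$ and suppose $|T|\le S+M$. If $|T|\le 2r_1$, one ball already covers $T$. Otherwise we remove a piece to be covered by the \emph{smallest} ball $r_m$, which keeps the maximum $M=r_1$ available for the remainder. We may assume $r_m>0$ and $|T|\ge r_m$; if $|T|<r_m$, then $|T|\le 2r_1$ and we are done. Apply \cref{cor:removable-piece} with $a=r_m$. It gives a two-piece decomposition $T=T_0\cup T_1$ with $r_m\le|T_1|\le2r_m$, and one ball of radius $r_m$ covers $T_1$. Then
\[
|T_0|=|T|-|T_1|\le S+M-r_m=\sum_{i<m}r_i+\max_{i<m}r_i ,
\]
since $\max_{i<m}r_i=r_1=M$ for $m\ge2$. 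By induction, $(r_1,\ldots,r_{m-1})$ covers the metric tree $T_0$ with centers in $T_0$. Because $T_0$ is geodesically convex, $B_{T_0}(v,s)\subseteq B_T(v,s)$, so these balls also cover $T_0$ inside $T$. Adding the ball of radius $r_m$ around $T_1$ covers $T$.

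The main point to check is that removing the smallest ball never lowers the maximum term, while the removed length is at least $r_m$. Both facts hold by the ordering and the lower bound $|T_1|\ge a$ in \cref{cor:removable-piece}. A small degenerate case remains: $T_0$ may be a single point, which is allowed since one-point trees have length zero. The induction hypothesis must therefore be stated for all finite metric trees, including trivial ones. If $|T_0|$ is small, the induction still applies, because the hypothesis only needs $|T_0|\le\sum_{i<m}r_i+M$.
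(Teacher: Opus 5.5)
Your proposal is correct and is essentially the paper's proof: you induct on the number of positive radii, apply \cref{cor:removable-piece} with $a$ equal to the smallest radius to split off a piece $T_1$ with $a\le|T_1|\le2a$ that this ball covers, and observe that the remaining radii keep the same maximum, so $|T_0|\le(S-a)+M$. Your extra steps, the early exit when $|T|\le2r_1$ and the convexity remark moving the balls from $T_0$ to $T$, only make the same argument more explicit.
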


\begin{proof}
We adapt the pruning induction of Land and Lu~\cite[proof of Lemma~1]{LandLu}, using the metric minimal-piece lemma of Norin and Turcotte~\cite[Lemma~2.4]{NorinTurcotte}. If $T$ is the one-point tree, the conclusion is immediate. Assume that $|T|>0$. The hypothesis then implies that at least one radius is positive. We delete the zero radii, renumber the remaining radii, and argue by induction on their number. If only one positive radius remains, the conclusion follows from $|T|\le2r_1$.

Assume that at least two positive radii remain, and let $a$ be a smallest one. If $|T|<a$, then the ball of radius $a$ covers $T$. Assume $|T|\ge a$. By \cref{cor:removable-piece}, we write $T=T_0\cup T_1$ with $a\le|T_1|\le2a$, and use the radius $a$ on $T_1$. The remaining radii have sum $S'=S-a$ and maximum $M$, and $|T_0|=|T|-|T_1|\le |T|-a\le S'+M$. They cover $T_0$ by induction. Together the balls cover $T$.
\end{proof}

We apply the preceding sum-plus-maximum criterion to extract a $0$-good subcover from any cover whose radii are bounded by the tree length.

\begin{lemma}\label{lem:trimming}
Let $T$ be a finite metric tree of length $L>0$, and let
$\mathbf s=(s_1,\ldots,s_m)\in\C(T)$ satisfy $0\le s_i\le L$ for every $i$.
After reordering the radii, some subtuple of $\mathbf s$ belongs to
$\C(T,0)$.
\end{lemma}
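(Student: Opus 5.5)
We order the radii so that $s_1\ge s_2\ge\dots\ge s_m$, and take the largest-first prefix. Concretely, let $k$ be the least index with $\sum_{i\le k}s_i\ge L$, if such an index exists. We claim that the prefix $(s_1,\ldots,s_k)$, or a slight modification of it, lies in $\C(T,0)$.

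First suppose no such $k$ exists, so $\sum_i s_i<L$. Then $\mathbf s$ itself is a cover with sum at most $L=|T|+0$, and the whole tuple is the required subtuple.

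Now suppose $k$ exists. Put $S'=\sum_{i<k}s_i<L$. Then $S'+s_k\ge L$, and since the radii are sorted decreasingly, $M=s_1\ge s_k$.

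We split into two cases according to $k$.

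\emph{Case $k\ge2$.} Consider the prefix $(s_1,\ldots,s_{k-1})$. Its sum is $S'$ and its maximum is $s_1$. If $S'+s_1\ge L$, then \cref{lem:sum-plus-max} shows that this prefix covers $T$. Its sum $S'$ is less than $L$, so the prefix is $0$-good.

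It remains to rule out $S'+s_1<L$. In that case the full prefix $(s_1,\ldots,s_k)$ has sum $S'+s_k\ge L$, but its sum may exceed $L$. To handle this we use monotonicity: any cover remains a cover when radii are enlarged. We cannot shrink $s_k$, because the subtuple must consist of the original radii. Note, however, that $S'+s_k\le S'+s_1<L$, which contradicts $S'+s_k\ge L$. Hence $S'+s_1\ge L$ always holds when $k\ge2$, and the prefix of length $k-1$ works.

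\emph{Case $k=1$.} Here $s_1\ge L$, and the hypothesis gives $s_1\le L$, so $s_1=L$. A single ball of radius $L\ge\diamT(T)/2$ covers $T$ by \cref{lem:tree-radius}, and its sum is exactly $L$. So $(s_1)$ is $0$-good. This is the only place where the hypothesis $s_i\le L$ is used.

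The main subtlety is exactly this index bookkeeping. The stopping rule must be chosen so that the sum-plus-maximum hypothesis $|T|\le S'+M$ holds while the sum stays at most $L$. In the case $k\ge2$ this works because the largest radius appears both in the prefix and as the maximum. In the case $k=1$ it works because of the bound $s_1\le L$.

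Since $\mathbf s$ is assumed to be a cover, the total sum satisfies $\sum_i s_i\ge$ the minimal covering budget. However, we never needed this fact: the argument relies only on \cref{lem:sum-plus-max} and \cref{lem:tree-radius}. Hence the hypothesis $\mathbf s\in\C(T)$ matters only in the first case, where $\sum_i s_i<L$ and the whole tuple is returned.
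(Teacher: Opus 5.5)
Your argument is correct and is essentially the paper's: sort the radii decreasingly, take a largest-first prefix, apply \cref{lem:sum-plus-max}, and use $s_k\le s_1$ for the key inequality. The only difference is the stopping rule. You keep the longest prefix whose sum is below $L$ and verify the sum-plus-maximum threshold via $S'+s_1\ge S'+s_k\ge L$ (the detour about enlarging or shrinking $s_k$ is unnecessary). The paper instead keeps the shortest prefix with $2s_1+\sum_{i=2}^k s_i\ge L$ and verifies that its sum is at most $L$.
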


\begin{proof}
Assume $s_1\ge s_2\ge\cdots\ge s_m$. If $\sum_i s_i\le L$, then we keep all radii. Otherwise, let $k$ be the least index such that
\begin{equation}\label{eq:trim-threshold}
 2s_1+\sum_{i=2}^k s_i\ge L.
\end{equation}
The index exists because the left side for $k=m$ is
$s_1+\sum_i s_i>L$. By \cref{lem:sum-plus-max}, the tuple
$(s_1,\ldots,s_k)$ can be recentered to cover $T$.

If $k=1$, then $2s_1\ge L$, and the retained one-ball cover is $0$-good because $s_1\le L$. Let $k\ge2$. The choice of $k$ gives $2s_1+\sum_{i=2}^{k-1}s_i<L$. Since $s_k\le s_1$, we have $\sum_{i=1}^k s_i=s_1+\sum_{i=2}^{k-1}s_i+s_k<L-s_1+s_k\le L$. Thus the retained tuple is $0$-good.
\end{proof}

For fixed $m$, sorting the coordinates of $[0,L]^m$ is continuous. On the set where the total sum is greater than $L$, the first index satisfying \eqref{eq:trim-threshold} is Borel. Thus \cref{lem:trimming} gives a Borel trimming map on every layer.

\begin{corollary}\label{cor:random-trimming}
Let $T$ be a finite metric tree of length $L>0$, let $0<r\le L$, and suppose that $\nu_0$ is a probability measure on $\C(T)$ such that almost every sampled radius lies in $[0,r]$. Then there is a probability measure $\nu$ on $\C(T,0)$ such that $E_\nu\le E_{\nu_0}$.
\end{corollary}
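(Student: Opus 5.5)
The plan is to push $\nu_0$ forward under the Borel trimming map from \cref{lem:trimming}, applied layer by layer. First, I discard a null set. Let $G\subseteq\C(T)$ be the set of covers all of whose radii lie in $[0,r]$. On each layer, $G\cap\C_m(T)$ is closed, so $G$ is Borel. The hypothesis that almost every sampled radius lies in $[0,r]$ says $E_{\nu_0}(\R_{\ge0}\setminus[0,r])=0$. Equivalently, $\int|\{i:r_i>r\}|\,d\nu_0=0$, so $\nu_0(G)=1$. Since $r\le L$, every $\mathbf s\in G$ satisfies $0\le s_i\le L$, and \cref{lem:trimming} applies to it.

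Next, I define $\tau:G\to\C(T,0)$ on each layer $G\cap\C_m(T)$. If $\sum_is_i\le L$, set $\tau(\mathbf s)=\mathbf s$. Otherwise, sort the coordinates decreasingly, find the least $k$ satisfying \eqref{eq:trim-threshold}, and output $(s_{(1)},\dots,s_{(k)})\in\C_k(T,0)$. The remark after \cref{lem:trimming} gives Borel measurability on each layer: sorting is continuous, and the index $k$ is Borel. For measurability into the disjoint union, note that the preimage of a Borel set in $\C_k(T)$ is a countable union over $m$ of Borel subsets of layers. On the complement of $G$, which is $\nu_0$-null, define $\tau$ arbitrarily. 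For instance, send it to the one-ball cover $(L/2)$, which is $0$-good and Borel. Then set $\nu=\tau_*\nu_0$, a probability measure on $\C(T,0)$.

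Finally, I check intensity domination. For $\mathbf s\in G$, the multiset of radii of $\tau(\mathbf s)$ is a sub-multiset of that of $\mathbf s$. Hence $|\{i:\tau(\mathbf s)_i\in A\}|\le|\{i:s_i\in A\}|$ for every Borel $A$. Integrating against $\nu_0$ over $G$ gives
\[
E_\nu(A)=\int_G|\{i:\tau(\mathbf s)_i\in A\}|\,d\nu_0(\mathbf s)\le\int_G|\{i:s_i\in A\}|\,d\nu_0(\mathbf s)=E_{\nu_0}(A),
\]
since the complement of $G$ is $\nu_0$-null. The change of variables is valid for the nonnegative Borel function $\mathbf r\mapsto|\{i:r_i\in A\}|$ on $\C(T)$.

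The main obstacle is the measurability bookkeeping: making $\tau$ Borel across layers and handling the null exceptional set. The combinatorics are already settled by \cref{lem:trimming}. Note that $\tau$ only records radii, and recentering is implicit because $\C(T)$ consists of radius tuples. So no measurable selection of centers is needed.
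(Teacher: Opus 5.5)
Your proposal is correct and matches the paper's proof: push $\nu_0$ forward under the layerwise Borel trimming map of \cref{lem:trimming} on the full-measure set of covers with radii in $[0,r]\subseteq[0,L]$, send the null complement to the one-ball cover $(L/2)$, and conclude $E_\nu\le E_{\nu_0}$ because the map only deletes radii. You also spell out the measurability bookkeeping that the paper leaves implicit: that $G$ is Borel, that the sorting and threshold-index maps are Borel, and that measurability holds across layers.
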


\begin{proof}
We apply the Borel trimming map on the full-measure set of covers whose radii lie in $[0,r]\subseteq[0,L]$ and map the null complement to the one-ball cover $(L/2)$. The resulting push-forward is a probability measure on $\C(T,0)$. The map only deletes radii almost surely, so $E_\nu(A)\le E_{\nu_0}(A)$ for every Borel set $A$.
\end{proof}

\begin{remark}
The trimming step keeps radii, not centers. The retained radii may need new centers. This is why the bound for each cover can be recovered after the marked centers are forgotten.
\end{remark}

\subsection{Compact ideal-cover rounding}

We first state the rounding argument without reference to metric trees. Let $Y$ and $X$ be nonempty compact metric spaces, and let $R\subseteq Y\times X$ be closed. A point of $Y$ is a demand and a point of $X$ is a covering object. Put $R_y=\{x\in X:(y,x)\in R\}$ and $R^x=\{y\in Y:(y,x)\in R\}$. A finite counting measure $\eta$ on $X$ covers $Y$ if $\eta(R_y)\ge1$ for every $y\in Y$. A finite positive Borel measure $\xi$ on $X$ is a fractional cover if $\xi(R_y)\ge1$ for every $y\in Y$.

Following Tamir~\cite{Tamir}, a $0$--$1$ matrix is \emph{balanced} if it has no square submatrix of odd order in which every row and every column has exactly two ones. For finite families $y_1,\ldots,y_n\in Y$ and $x_1,\ldots,x_k\in X$, let $A$ be the incidence matrix $A_{ij}=\one_{\{(y_i,x_j)\in R\}}$. We call $R$ \emph{ideal} if, for every such matrix $A$, every extreme point of the set-covering polyhedron $P(A)=\{z\in\R_{\ge0}^k:Az\ge\mathbf1\}$ is integral.

For a compact metric space $X$, let $\Nf(X)$ be the space of finite counting measures $\eta=\sum_{i=1}^m\delta_{x_i}$, where $m\ge0$, with the weak topology inherited from the finite positive Borel measures on $X$. For every Borel set $A\subseteq X$, the evaluation map $\eta\mapsto\eta(A)$ is Borel. Indeed, it is upper semicontinuous for closed $A$ by the portmanteau theorem, and the class of sets with Borel evaluation maps is a Dynkin system containing the closed sets, which form a generating $\pi$-system. See Reiss~\cite[Section~1.1]{Reiss} for the standard point-process framework and Billingsley~\cite[Theorem~2.1]{Billingsley} for the portmanteau theorem. For a probability measure $\mathsf P$ on $\Nf(X)$ with finite mean mass, its \emph{intensity} is $I_{\mathsf P}(A)=\int_{\Nf(X)}\eta(A)\,d\mathsf P(\eta)$. By Tonelli's theorem, we have
\begin{equation}\label{eq:campbell-finite}
 \int_X f\,dI_{\mathsf P}
 =\int_{\Nf(X)}\left(\int_X f\,d\eta\right)d\mathsf P(\eta)
\end{equation}
for every nonnegative Borel function $f$. We also use the following standard form of measure order on a compact space:
\begin{equation}\label{eq:measure-order-standard}
 \int f\,d\mu\le\int f\,d\nu
 \quad(f\in C(X),\ f\ge0)
 \quad\Longrightarrow\quad \mu\le\nu.
\end{equation}
This follows from the Riesz--Markov theorem.

\begin{lemma}[Standard point-measure compactness; see Reiss and Billingsley~\cite{Reiss,Billingsley}]\label{lem:point-measure-compactness}
For every integer $K\ge0$, the set $\Nf^{\le K}(X)=\{\eta\in\Nf(X):\eta(X)\le K\}$ is compact. The space $\Nf(X)$ is a closed Polish subspace of the finite positive Borel measures on $X$. Moreover, if $F\subseteq X$ is closed, then $H_F=\{\eta\in\Nf(X):\eta(F)\ge1\}$ is closed in $\Nf(X)$.
\end{lemma}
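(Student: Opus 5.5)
The lemma has three parts, and I will prove them in the order: Polish embedding, compactness of $\Nf^{\le K}(X)$, and closedness of $H_F$. Throughout, $\mathcal M(X)$ denotes the finite positive Borel measures on $X$ with the weak topology. Since $X$ is compact metric, $\mathcal M(X)$ is Polish, for instance metrized by a Prokhorov-type or bounded-Lipschitz metric. Every closed subset of a Polish space is Polish. So for the second statement it suffices to show that $\Nf(X)$ is closed in $\mathcal M(X)$.

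\textbf{Closedness of $\Nf(X)$.} Let $\eta_n=\sum_{i=1}^{m_n}\delta_{x_i^{(n)}}\to\mu$ weakly. Testing against the constant function $1$ gives $m_n=\eta_n(X)\to\mu(X)$. The $m_n$ are integers, so eventually $m_n=m$ is constant.
\begin{itemize}
\item[(i)] By compactness of $X^m$, pass to a subsequence along which $x_i^{(n)}\to x_i$ for each $i$.
\item[(ii)] For every $f\in C(X)$ we then have $\int f\,d\eta_n=\sum_i f(x_i^{(n)})\to\sum_i f(x_i)$.
\item[(iii)] Hence $\mu=\sum_i\delta_{x_i}\in\Nf(X)$, because weak limits are unique.
\end{itemize}

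\textbf{Compactness of $\Nf^{\le K}(X)$.} This follows from the same mechanism. The map $\Phi_m:X^m\to\mathcal M(X)$, $(x_1,\ldots,x_m)\mapsto\sum_i\delta_{x_i}$, is continuous, again by the computation in (ii). So $\Nf^{\le K}(X)=\bigcup_{m=0}^K\Phi_m(X^m)$ is a finite union of continuous images of compact sets, hence compact. The $m=0$ term is the zero measure. Alternatively, compactness follows from Prokhorov's theorem together with closedness, since total mass is bounded by $K$ and $X$ is compact.

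\textbf{Closedness of $H_F$.} Use the portmanteau theorem in its form for finite measures with converging total mass: $\limsup_n\eta_n(F)\le\eta(F)$ for closed $F$. If $\eta_n\in H_F$ and $\eta_n\to\eta$ in $\Nf(X)$, then $1\le\limsup_n\eta_n(F)\le\eta(F)$, so $\eta\in H_F$. The total masses converge, so after normalization the probability version of the theorem in Billingsley applies directly. The degenerate zero-mass limit cannot occur here: it would give $\eta_n(X)\to 0$, which is impossible when $\eta_n(F)\ge1$.

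\textbf{Main obstacle.} There is no deep obstacle; the only care needed is bookkeeping. First, apply portmanteau to finite rather than probability measures, either by normalizing or by using the integer stabilization of the total mass. Second, observe that weak convergence of counting measures forces the number of atoms to stabilize. That observation is the key step behind both the closedness of $\Nf(X)$ and its compactness.
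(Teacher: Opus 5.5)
Your proof is correct. The paper gives no argument for this lemma. It calls the lemma standard and cites Reiss for the point-process framework and Billingsley for Prokhorov's theorem and the portmanteau theorem. The implied argument is: total mass at most $K$ on a compact space gives relative compactness, $\Nf(X)$ is closed, and $H_F$ is closed by portmanteau. Your alternative route to compactness and your treatment of $H_F$ follow exactly this citation-based route.

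Your main route differs and is more elementary and self-contained. Weak convergence forces the integer masses $\eta_n(X)$ to stabilize at some $m$. Compactness of $X^m$ and continuity of $\Phi_m\colon X^m\to\mathcal M(X)$ then yield closedness of $\Nf(X)$. They also yield compactness of $\Nf^{\le K}(X)=\bigcup_{m\le K}\Phi_m(X^m)$, with no appeal to Prokhorov's theorem.

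The same mechanism proves closedness of $H_F$ without portmanteau. Once $m_n=m$, label an atom lying in $F$ first and pass to a subsequence on which all atoms converge. The first limit lies in $F$ because $F$ is closed, so $\eta(F)\ge1$.

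If you keep the portmanteau step, the normalization is unnecessary. For finite measures, $\limsup_n\eta_n(F)\le\eta(F)$ follows directly: take continuous Urysohn functions $f_k\downarrow\mathbf 1_F$. Then $\eta_n(F)\le\int f_k\,d\eta_n\to\int f_k\,d\eta$, and $\int f_k\,d\eta\downarrow\eta(F)$.
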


We first use the finite-dimensional rounding step for an ideal set-covering polyhedron; compare Cornu{\'e}jols~\cite[Chapter~6]{Cornuejols}.

\begin{lemma}\label{lem:finite-demand-rounding}
Let $D\subseteq Y$ be finite and nonempty, and let $\xi$ be a finite fractional cover on $X$. If the incidence relation is ideal, then there is a probability measure $\mathsf P_D$ on $\Nf(X)$ such that every realization covers $D$ and $I_{\mathsf P_D}\le\xi$.
\end{lemma}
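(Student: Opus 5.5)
The plan is to reduce to a finite set-covering polyhedron by grouping covering objects according to which demands of $D$ they cover. Then I decompose the resulting fractional point into integral extreme points using ideality. Finally I spread each integral solution back over $X$ according to $\xi$, so that the intensity is dominated by $\xi$ itself rather than by atoms at chosen representatives.

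\emph{Types.} For $S\subseteq D$ put $X_S=\{x\in X: R^x\cap D=S\}$. Since $R$ is closed, each $R_y$ is closed in $X$. Hence $X_S=\bigcap_{y\in S}R_y\cap\bigcap_{y\in D\setminus S}(X\setminus R_y)$ is Borel, and the sets $X_S$ partition $X$. Let $\mathcal S=\{S: \xi(X_S)>0\}$, which is finite. For each $S\in\mathcal S$ choose a representative $x_S\in X_S$, and set $z_S=\xi(X_S)$. Let $A$ be the incidence matrix of the finite families $D$ and $(x_S)_{S\in\mathcal S}$, so that $A_{yS}=\one_{\{y\in S\}}$. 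For each $y\in D$ we have $\sum_{S\ni y}z_S=\xi(R_y)\ge1$, because $R_y$ is the disjoint union of those $X_S$ with $y\in S$, up to $\xi$-null classes. Thus $z\in P(A)$, and $\mathcal S\neq\varnothing$ because $D\neq\varnothing$.

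\emph{Integral decomposition.} The polyhedron $P(A)$ lies in the nonnegative orthant, so it is pointed, and its recession cone is $\R^{\mathcal S}_{\ge0}$. By Minkowski--Weyl, $z=\sum_j\lambda_j v^{(j)}+w$ with $\lambda_j\ge0$, $\sum_j\lambda_j=1$, $v^{(j)}$ extreme points of $P(A)$, and $w\ge0$. By ideality each $v^{(j)}$ is a nonnegative integer vector with $Av^{(j)}\ge\one$, and $\sum_j\lambda_j v^{(j)}\le z$ coordinatewise.

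\emph{Spreading back.} Let $\pi_S=\xi|_{X_S}/z_S$, a probability measure on $X_S$. Define $\mathsf P_D$ as follows. Choose $j$ with probability $\lambda_j$. Then, independently, draw $v^{(j)}_S$ points from $\pi_S$ for each $S\in\mathcal S$, and let $\eta$ be the sum of the corresponding Dirac masses.
\begin{itemize}
\item \emph{Measurability.} For fixed $j$ this is the push-forward of a product measure on $X^{m_j}$ under the continuous map $(x_1,\ldots,x_{m_j})\mapsto\sum_i\delta_{x_i}$ into $\Nf(X)$, so $\mathsf P_D$ is a Borel probability measure.
\item \emph{Coverage.} Every sampled point of type $S$ lies in $R_y$ for each $y\in S$. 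Hence $\eta(R_y)\ge (Av^{(j)})_y\ge1$ for every $y\in D$, and every realization covers $D$.
\item \emph{Intensity.} The intensity is $I_{\mathsf P_D}=\sum_S\big(\sum_j\lambda_jv^{(j)}_S\big)\pi_S\le\sum_S z_S\pi_S\le\xi$.
\end{itemize}

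I expect the main obstacle to be the passage from a possibly diffuse $\xi$ to a finite incidence matrix. Ideality is only assumed for finite families, so the type partition with representatives is what makes it applicable. The second obstacle is keeping intensity domination by $\xi$ rather than by atoms at the $x_S$. Sampling within each type class from the normalized restriction of $\xi$ resolves this, since coverage of $D$ depends only on the type of the point.
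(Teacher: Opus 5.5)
Your proof is correct and follows essentially the same route as the paper: you partition $X$ into demand-type classes, apply ideality and Minkowski--Weyl to the vector $z$ of class masses, and resample within each class from the normalized restriction of $\xi$. The paper first truncates each integral vertex to a $0$--$1$ vector and draws one point per selected class, whereas you draw $v^{(j)}_S$ points directly; this is equally valid because $\sum_j\lambda_j v^{(j)}\le z$ already gives the intensity bound.
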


\begin{proof}
We write $D=\{y_1,\ldots,y_n\}$. For every nonempty $I\subseteq[n]$, let $Q_I=\{x\in X:\{i:(y_i,x)\in R\}=I\}$. Each $Q_I$ is Borel. We ignore $Q_\varnothing$ and the classes of $\xi$-measure zero, enumerate the remaining classes as $Q_1,\ldots,Q_k$, and set $z_j=\xi(Q_j)>0$. We choose $x_j\in Q_j$, and let $A$ be the resulting incidence matrix. Since $\xi$ fractionally covers $D$, we have $Az\ge\mathbf1$, so $z\in P(A)$.

By ideality, every vertex of $P(A)$ is integral. The polyhedron is pointed and its recession cone is $\R_{\ge0}^k$. The Minkowski--Weyl theorem therefore yields vertices $y^{(1)},\ldots,y^{(N)}\in\mathbb Z_{\ge0}^k$ of $P(A)$, numbers $p_\ell\ge0$ with $\sum_\ell p_\ell=1$, and $h\in\R_{\ge0}^k$ such that $z=\sum_{\ell=1}^Np_\ell y^{(\ell)}+h$. Set $\bar y_j^{(\ell)}=\min\{1,y_j^{(\ell)}\}$. Since $A$ is a $0$--$1$ matrix and $Ay^{(\ell)}\ge\mathbf1$, every row contains a column on which $\bar y^{(\ell)}$ equals one. Hence $A\bar y^{(\ell)}\ge\mathbf1$ and $\sum_{\ell=1}^Np_\ell\bar y^{(\ell)}\le\sum_{\ell=1}^Np_\ell y^{(\ell)}\le z$.

Let $J_\ell=\{j:\bar y_j^{(\ell)}=1\}$. Conditional on $\ell$, we independently choose one point of each $Q_j$, $j\in J_\ell$, according to the probability measure $\xi|_{Q_j}/z_j$, and take the corresponding counting measure. Formally, on $\Omega_\ell=\prod_{j\in J_\ell}Q_j$, we use the product measure $\kappa_\ell=\bigotimes_{j\in J_\ell}(\xi|_{Q_j}/z_j)$ and the Borel map $\Psi_\ell((x_j)_{j\in J_\ell})=\sum_{j\in J_\ell}\delta_{x_j}$.
Define $\mathsf P_D=\sum_\ell p_\ell(\Psi_\ell)_*\kappa_\ell$. Every realization covers $D$. On $Q_j$ its intensity is
\[
 I_{\mathsf P_D}|_{Q_j}
 =\left(\sum_{\ell=1}^Np_\ell\bar y_j^{(\ell)}\right)
   \frac{\xi|_{Q_j}}{z_j}
 \le\xi|_{Q_j}.
\]
It assigns no mass to $Q_\varnothing$, and hence $I_{\mathsf P_D}\le\xi$ on all of $X$.
\end{proof}

Aharoni and Holzman~\cite{AharoniHolzman} and Rademacher, Toriello, and Vielma~\cite{RademacherTorielloVielma} give broader background on fractional covering and infinite-dimensional covering polyhedra. We now prove the compact intensity-domination form used below.

\begin{theorem}\label{thm:compact-rounding}
Let $Y$ and $X$ be nonempty compact metric spaces, and let $R\subseteq Y\times X$ be a closed ideal incidence relation. If $\xi$ is a finite fractional cover on $X$, then there is a probability measure $\mathsf P$ on $\Nf(X)$ such that almost every configuration covers $Y$ and $I_{\mathsf P}\le\xi$. In particular, the expected number of selected objects is at most $\xi(X)$.
\end{theorem}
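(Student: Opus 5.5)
We obtain the theorem from \cref{lem:finite-demand-rounding} by a compactness and limit argument, following the outline in the introduction. Since $Y$ is a compact metric space, we fix an increasing sequence of finite sets $D_1\subseteq D_2\subseteq\cdots\subseteq Y$ whose union $D_\infty$ is dense in $Y$. For each $n$, \cref{lem:finite-demand-rounding} gives a probability measure $\mathsf P_n$ on $\Nf(X)$. Every realization of $\mathsf P_n$ covers $D_n$, and its intensity satisfies $I_{\mathsf P_n}\le\xi$. In particular the mean mass is at most $\xi(X)$.

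The first step is tightness. By Markov's inequality, $\mathsf P_n(\eta(X)>K)\le\xi(X)/K$ uniformly in $n$. The sets $\Nf^{\le K}(X)$ are compact by \cref{lem:point-measure-compactness}, so the family $\{\mathsf P_n\}$ is tight on the Polish space $\Nf(X)$. Prokhorov's theorem then gives a subsequence converging weakly to a probability measure $\mathsf P$ on $\Nf(X)$.

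The second step shows that the intensity bound passes to the limit. Fix $f\in C(X)$ with $f\ge0$. The map $\eta\mapsto\int f\,d\eta$ is continuous on $\Nf(X)$, so for each $K$ the truncation $\min\{K,\int f\,d\eta\}$ is bounded and continuous. Weak convergence and \eqref{eq:campbell-finite} give
\[
\int\min\Bigl\{K,\int f\,d\eta\Bigr\}\,d\mathsf P\le\liminf_n\int f\,dI_{\mathsf P_n}\le\int f\,d\xi .
\]
Letting $K\to\infty$ and using monotone convergence yields $\int f\,dI_{\mathsf P}\le\int f\,d\xi$. By \eqref{eq:measure-order-standard} this gives $I_{\mathsf P}\le\xi$. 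Taking $f\equiv1$ shows that the mean mass is finite, so the intensity is well defined.

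The third step, and the main obstacle, is to prove that almost every configuration covers all of $Y$ and not merely the demands in the finite sets. For each $y\in D_\infty$, the set $H_{R_y}$ is closed by \cref{lem:point-measure-compactness}, because $R_y$ is a closed subset of $X$ ($R$ is closed). Moreover $\mathsf P_n(H_{R_y})=1$ for all large $n$. The portmanteau theorem for closed sets then gives $\mathsf P(H_{R_y})=1$. Since $D_\infty$ is countable, almost every $\eta$ covers every $y\in D_\infty$.

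It remains to extend this from $D_\infty$ to $Y$. Fix such an $\eta$ and any $y\in Y$. Choose $y_k\in D_\infty$ with $y_k\to y$. Because $\eta$ has finite support, some support point $x$ satisfies $(y_k,x)\in R$ for infinitely many $k$. Since $R$ is closed, it follows that $(y,x)\in R$, so $\eta(R_y)\ge1$. This uses only finiteness of the support and closedness of $R$. The final assertion of the theorem follows from $I_{\mathsf P}(X)\le\xi(X)$.
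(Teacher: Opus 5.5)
Your proposal is correct and follows essentially the same route as the paper. Both proofs apply finite-demand rounding on finite sets exhausting a dense subset of $Y$. Both get tightness from Markov's inequality and the compactness of $\Nf^{\le K}(X)$, take a Prokhorov limit, and apply the portmanteau theorem to the closed sets $H_{R_y}$. Both control the intensity by truncating $\eta\mapsto\int f\,d\eta$ and then using monotone convergence and \eqref{eq:measure-order-standard}. Your pigeonhole argument is the sequential form of the paper's observation that $\bigcup_h R^{x_h}$ is a closed set containing a dense subset.

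The only slip is cosmetic: the $\liminf$ in your intensity estimate should be taken along the weakly convergent subsequence. This is harmless, since every term is bounded by $\int f\,d\xi$.
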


\begin{proof}
Choose a dense sequence $q_1,q_2,\ldots$ in $Y$, put $D_n=\{q_1,\ldots,q_n\}$, and let $\mathsf P_n$ be given by \cref{lem:finite-demand-rounding}. Write $M=\xi(X)$. Since $I_{\mathsf P_n}\le\xi$, we have $\E_{\mathsf P_n}\eta(X)\le M$. For every positive integer $K$, by Markov's inequality and \cref{lem:point-measure-compactness}, we have $\mathsf P_n(\Nf(X)\setminus\Nf^{\le K}(X))\le M/K$. Thus $(\mathsf P_n)$ is tight. Prokhorov's theorem, in the form stated by Billingsley~\cite[Theorem~5.1]{Billingsley}, gives a subsequence $\mathsf P_{n_j}$ converging weakly to a probability measure $\mathsf P$ on $\Nf(X)$.

Fix $q=q_i$. The section $R_q$ is closed, and therefore
$H_q=\{\eta:\eta(R_q)\ge1\}$ is closed by \cref{lem:point-measure-compactness}. For all sufficiently large $j$, $\mathsf P_{n_j}(H_q)=1$. The portmanteau theorem gives $\mathsf P(H_q)=1$. After intersecting these full-measure events over all $i$, a limiting configuration $\eta=\sum_{h=1}^m\delta_{x_h}$ covers every $q_i$. Its covered demand set is $\bigcup_{h=1}^mR^{x_h}$, a finite union of closed sets. It contains a dense subset of $Y$, and hence it equals $Y$. Thus $\mathsf P$ is supported on finite covers of $Y$.

Let $f\in C(X)$ be nonnegative and put $G_f(\eta)=\int f\,d\eta$. The function $G_f$ is continuous and nonnegative, although it need not be bounded on $\Nf(X)$. For $K>0$, $G_f\wedge K$ is bounded and continuous. Hence weak convergence and monotone convergence give
\[
 \int G_f\,d\mathsf P
 =\sup_K\lim_{j\to\infty}\int(G_f\wedge K)\,d\mathsf P_{n_j}
 \le\liminf_{j\to\infty}\int G_f\,d\mathsf P_{n_j}
 \le\int f\,d\xi.
\]
Taking $f=1$ shows that $\mathsf P$ has finite mean mass. By \eqref{eq:campbell-finite}, the left side is $\int f\,dI_{\mathsf P}$; \eqref{eq:measure-order-standard} now gives $I_{\mathsf P}\le\xi$.
\end{proof}

We now specialize the theorem to metric-tree balls.

\begin{definition}
Fix a finite metric tree $T$ and $r>0$, and put $X_{T,r}=T\times[0,r]$. A point $b=(v,s)\in X_{T,r}$ is a \emph{marked ball}, with center $v$, radius $s$, and underlying ball $B(b)=B_T(v,s)$. The radius map is $\pi:X_{T,r}\to[0,r]$, $\pi(v,s)=s$. For a finite positive Borel measure $\xi$ on $X_{T,r}$, its coverage function is $c_\xi(x)=\int_{X_{T,r}}\one_{\{x\in B(b)\}}\,d\xi(b)$.

The measure $\xi$ is a \emph{fractional marked-ball cover} of $T$ if $c_\xi(x)\ge1$ for every $x\in T$; its \emph{radius measure} is $\pi_*\xi$. A finite counting measure $\eta\in\Nf(X_{T,r})$ is a \emph{finite marked cover} if $c_\eta(x)\ge1$ for every $x\in T$. A \emph{random finite marked cover} is a probability measure $\mathsf P$ on $\Nf(X_{T,r})$, with finite mean mass, that is supported on finite marked covers. Its expected radius measure is $\pi_*I_{\mathsf P}$.
\end{definition}

Fix $m\ge1$ and put $K_m=\C_m(T)\cap[0,r]^m$. The relation
\[
 \mathcal F_m=\left\{(\mathbf s,\mathbf v)\in K_m\times T^m:
 T\subseteq\bigcup_{i=1}^mB_T(v_i,s_i)\right\}
\]
is closed. Indeed, the function
\[
 (\mathbf s,\mathbf v)\longmapsto
 \max_{x\in T}\min_{1\le i\le m}\bigl(d_T(x,v_i)-s_i\bigr)
\]
is continuous, and $\mathcal F_m$ is its nonpositive sublevel set. Every section over $K_m$ is nonempty and compact. The Arsenin--Kunugui selection theorem, in the form stated by Kechris~\cite[Theorem~18.18]{Kechris}, gives a Borel map
\begin{equation}\label{eq:borel-lifting}
 \mathbf v_m:K_m\longrightarrow T^m
\end{equation}
whose selected centers realize the cover.

For the marked-ball space, write $R_{T,r}=\{(x,(v,s))\in T\times X_{T,r}:d_T(x,v)\le s\}$. In Tamir's tree-network terminology, closed metric balls, including radius-zero balls, are neighborhood subtrees~\cite{Tamir}. Together with balanced-matrix ideality~\cite[Theorem~6.16]{Cornuejols}, this gives the finite incidence property below.

\begin{lemma}\label{lem:tree-ball-ideality}
For finite families $x_1,\ldots,x_n\in T$ and $(v_1,s_1),\ldots,(v_k,s_k)\in X_{T,r}$, the matrix $A_{ij}=\one_{\{d_T(x_i,v_j)\le s_j\}}$ is balanced. Consequently, every extreme point of $P(A)=\{z\in\R_{\ge0}^k:Az\ge\mathbf1\}$ is integral. Hence the incidence relation $(x,(v,s))\in R_{T,r}$ if and only if $d_T(x,v)\le s$ is closed and ideal.
\end{lemma}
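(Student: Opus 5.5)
The plan is to reduce the statement to two cited facts, Tamir's theorem and balanced-matrix ideality, after a finite combinatorial reduction. First, I reduce to a finite combinatorial tree. Given the finitely many points $x_1,\ldots,x_n$ and centers $v_1,\ldots,v_k$, I choose a finite combinatorial model of $T$ whose vertex set contains all $x_i$ and all $v_j$; subdividing edges does not change the metric tree. Each ball $B_T(v_j,s_j)$ is then a neighborhood subtree in Tamir's sense: it is the set of points of the tree network within distance $s_j$ of the point $v_j$. When $s_j=0$ it is the single point $\{v_j\}$, which is also admitted as a neighborhood subtree in~\cite{Tamir}. Each $x_i$ is a point of the network.

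Second, I invoke Tamir~\cite[Theorem~1]{Tamir}. It states that the incidence matrix between a finite set of points of a tree network and a finite family of neighborhood subtrees is balanced. Our $A$ is exactly such a matrix, with rows indexed by the points $x_i$ and columns indexed by the balls. If Tamir's theorem is stated with rows and columns in the other orientation, this is harmless, because balancedness is preserved under transposition: the forbidden odd cycle submatrices are closed under transpose. Repeated points or repeated balls only duplicate rows or columns. Duplicating a row or column also preserves balancedness, since a minimal odd-cycle submatrix cannot use two identical rows; two identical rows with exactly two ones each would close a $2$-cycle rather than an odd one. Even so, Tamir's statement already allows arbitrary finite families.

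Third, I apply Cornu\'ejols~\cite[Theorem~6.16]{Cornuejols}: if $A$ is balanced, then the set-covering polyhedron $\{z\ge0: Az\ge\mathbf 1\}$ is integral. This holds in particular when $A$ has zero rows. A zero row makes $P(A)$ empty, so the claim about its extreme points is vacuous. This gives integrality of every extreme point of $P(A)$, and by definition the relation $R_{T,r}$ is ideal.

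Closedness is a separate, routine check: $(x,(v,s))\mapsto d_T(x,v)-s$ is continuous on $T\times X_{T,r}$, and $R_{T,r}$ is its nonpositive sublevel set.

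The only delicate step is the first: matching our closed metric balls, with real radii and centers possibly interior to edges, to Tamir's definition of neighborhood subtrees on a tree network. I would handle it by the subdivision argument above, so that centers become vertices. I would also note that Tamir's definition uses arbitrary nonnegative real radii measured along the network, which is exactly $d_T$.
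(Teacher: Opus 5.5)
Your proposal is correct and follows essentially the same route as the paper: subdivide a combinatorial model, treat the points $x_i$ and the closed balls as neighborhood subtrees so that Tamir's Theorem~1 gives balancedness, invoke balanced-matrix ideality, and get closedness from the continuous map $(x,v,s)\mapsto d_T(x,v)-s$. The paper differs in two minor ways. First, it also subdivides at the sphere points $\{x:d_T(x,v_j)=s_j\}$, which is only extra caution. Second, it cites ideality in the form ``$Q(A)=\{z\in[0,1]^k:Az\ge\mathbf 1\}$ is integral'' and then adds the short argument that an extreme point of $P(A)$ has no coordinate above $1$, so it is an extreme point of $Q(A)$ and hence integral; if the Cornu\'ejols result you cite is stated for $Q(A)$ rather than $P(A)$, you need that one step.
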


\begin{proof}
Choose a finite combinatorial model of $T$ and subdivide it at all demand points $x_i$, all centers $v_j$, and all points of the spheres $\{x\in T:d_T(x,v_j)=s_j\}$. There are only finitely many such subdivision points: on each edge, the distance from a fixed center is piecewise affine with slopes in $\{-1,1\}$, so a fixed level is met at most twice. Subdivision leaves the metric and all incidences unchanged.

Each singleton $\{x_i\}$ is a radius-zero neighborhood subtree, and each $B_T(v_j,s_j)$ is a neighborhood subtree of the subdivided tree. Moreover, $A_{ij}=1$ if and only if $\{x_i\}\cap B_T(v_j,s_j)\ne\varnothing$. Thus $A$ is the intersection matrix of two finite families of neighborhood subtrees. Tamir~\cite[Theorem~1]{Tamir} proved that every such matrix is balanced.

By Cornu{\'e}jols~\cite[Theorem~6.16]{Cornuejols}, every submatrix of a balanced $0$--$1$ matrix is ideal; see also Fulkerson, Hoffman, and Oppenheim~\cite{FulkersonHoffmanOppenheim}. In particular, the polytope $Q(A)=\{z\in[0,1]^k:Az\ge\mathbf1\}$ is integral. Let $z$ be an extreme point of $P(A)$. If $z_j>1$ for some $j$, then every covering inequality involving column $j$ has slack at least $z_j-1$. Hence, for all sufficiently small $\varepsilon>0$, both $z+\varepsilon e_j$ and $z-\varepsilon e_j$ belong to $P(A)$, contrary to the extremality of $z$. Thus $z\in[0,1]^k$. Since an extreme point of $P(A)$ lying in $Q(A)$ is also an extreme point of $Q(A)$, it is integral. Therefore the incidence relation is ideal. Finally, $R_{T,r}$ is the inverse image of $(-\infty,0]$ under the continuous map $(x,v,s)\mapsto d_T(x,v)-s$, and is consequently closed.
\end{proof}

For $m\ge0$, let $\Nf^m([0,r])=\{\theta\in\Nf([0,r]):\theta([0,r])=m\}$. If $m\ge1$, $\theta\in\Nf^m([0,r])$, and $1\le j\le m$, define its $j$th largest atom by
\[
 s_j(\theta)=\sup\bigl(\{t\in\mathbb Q\cap[0,r]:
 \theta([t,r])\ge j\}\cup\{0\}\bigr).
\]
The evaluation-map facts above show that every $s_j$ is Borel. With the empty tuple on $\Nf^0([0,r])$, the layerwise maps $\operatorname{rad}_m(\theta)=(s_1(\theta),\ldots,s_m(\theta))$ define a Borel radius-list map $\operatorname{rad}:\Nf([0,r])\to\bigsqcup_{m\ge0}[0,r]^m$.

\begin{corollary}\label{cor:balanced-rounding}
Let $\xi$ be a fractional marked-ball cover of $T$ on $X_{T,r}$. Then there is a random finite marked cover $\mathsf P$ such that $I_{\mathsf P}\le\xi$. After the centers are forgotten, there is a probability measure $\nu_0$ on $\C(T)$ such that every sampled radius lies in $[0,r]$ and $E_{\nu_0}\le\pi_*\xi$.
\end{corollary}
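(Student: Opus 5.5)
The corollary should follow by applying \cref{thm:compact-rounding} with demand space $Y=T$, object space $X=X_{T,r}$, and relation $R=R_{T,r}$, and then pushing the result forward to radius lists.

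First, check the hypotheses. Both $T$ and $X_{T,r}=T\times[0,r]$ are nonempty compact metric spaces. By \cref{lem:tree-ball-ideality}, $R_{T,r}$ is closed and ideal. A fractional marked-ball cover $\xi$ satisfies $\xi(R_x)=c_\xi(x)\ge1$ for every $x\in T$, because $R_x=\{(v,s):d_T(x,v)\le s\}$. So $\xi$ is a finite fractional cover in the sense of the theorem. The theorem then yields a probability measure $\mathsf P$ on $\Nf(X_{T,r})$ such that almost every configuration covers $T$ and $I_{\mathsf P}\le\xi$. In particular the mean mass is at most $\xi(X_{T,r})<\infty$. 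Covering in the theorem's sense means $\eta(R_x)=c_\eta(x)\ge1$ for all $x$, so almost every realization is a finite marked cover. To make $\mathsf P$ supported on finite marked covers, replace it on the null exceptional set by a fixed finite marked cover. One choice is the empty configuration plus nothing, but that does not cover $T$; instead, simply restrict $\mathsf P$ to the full-measure set of covers. That set is Borel, since it equals the intersection of the closed sets $H_{q_i}$ over a dense sequence, as in the proof of \cref{thm:compact-rounding}. This yields the first assertion.

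For the second assertion, push $\mathsf P$ forward by $\pi_*:\Nf(X_{T,r})\to\Nf([0,r])$ and then by the Borel radius-list map $\operatorname{rad}$. The map $\eta\mapsto\pi_*\eta$ is continuous for the weak topology, because $f\circ\pi$ is continuous whenever $f$ is, so it is Borel. If $\eta=\sum_{h=1}^m\delta_{(v_h,s_h)}$ is a finite marked cover, then $\operatorname{rad}(\pi_*\eta)$ lists $s_1,\dots,s_m$ in decreasing order. A permutation of the centers $v_h$ realizes this tuple as a cover, so the tuple lies in $\C_m(T)$ with all entries in $[0,r]$.

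Let $\nu_0$ be the image measure on $\C(T)$. For a Borel set $A\subseteq\R_{\ge0}$, the count $|\{i:r_i\in A\}|$ for the tuple $\operatorname{rad}(\pi_*\eta)$ equals $\eta(\pi^{-1}A)$. Therefore
\[
E_{\nu_0}(A)=\int\eta(\pi^{-1}A)\,d\mathsf P(\eta)=I_{\mathsf P}(\pi^{-1}A)\le\xi(\pi^{-1}A)=(\pi_*\xi)(A).
\]

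The main point needing care is measurability. We need the full-measure set of covers to be Borel, and the composite map into the disjoint-union Borel structure on $\C(T)$ to be Borel. Both follow from the evaluation-map facts already established and from the closedness of the sets $H_F$ in \cref{lem:point-measure-compactness}. The remaining steps are direct bookkeeping.
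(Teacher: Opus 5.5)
Your proposal is correct and follows the paper's proof: apply \cref{thm:compact-rounding} via \cref{lem:tree-ball-ideality}, forget the centers through the Borel map $\eta\mapsto\operatorname{rad}(\pi_*\eta)$, and compute $E_{\nu_0}(A)=I_{\mathsf P}(T\times A)\le(\pi_*\xi)(A)$. The only difference is how the null set is handled: the paper sends non-covering configurations to a fixed fallback cover $\mathbf c^*$, whereas you restrict to the closed full-measure set $\bigcap_i H_{q_i}$ of covering configurations, and this works equally well.
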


\begin{proof}
By \cref{lem:tree-ball-ideality}, the relation $R_{T,r}$ satisfies the hypotheses of \cref{thm:compact-rounding}; we apply that theorem. We then forget the centers in a Borel way. The map $\eta\mapsto\pi_*\eta$ is continuous, so $R(\eta)=\operatorname{rad}(\pi_*\eta)$ is Borel.

Choose a finite $r/2$-net in $T$ and place a ball of radius $r$ at every point of the net. This gives a fixed finite cover $\mathbf c^*$ whose radii lie in $[0,r]$. Define
\[
 R_T(\eta)=
 \begin{cases}
 R(\eta),&R(\eta)\in\C(T),\\
 \mathbf c^*,&R(\eta)\notin\C(T).
 \end{cases}
\]
The map is Borel because each $\C_m(T)$ is closed. The second line is used only on a null set. Let $\nu_0=(R_T)_*\mathsf P$. For every Borel set $A\subseteq[0,r]$, By Tonelli's theorem, we have
\[
 E_{\nu_0}(A)
 =\int(\pi_*\eta)(A)\,d\mathsf P(\eta)
 =I_{\mathsf P}(T\times A)
 =(\pi_*I_{\mathsf P})(A).
\]
Therefore $E_{\nu_0}=\pi_*I_{\mathsf P}\le\pi_*\xi$.
\end{proof}

If the target radius does not exceed the total length of the tree, the trimming lemma allows us to remove the allowance and obtain a $0$-good random cover.

\begin{corollary}\label{cor:budgeted-rounding}
Let $T$ be a finite metric tree of length $L>0$, let $0<r\le L$, and let $\xi$ be a fractional marked-ball cover on $X_{T,r}$. Then there is a probability measure $\nu$ on $\C(T,0)$ such that $E_\nu\le\pi_*\xi$.
\end{corollary}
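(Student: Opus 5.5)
The plan is to chain \cref{cor:balanced-rounding} with \cref{cor:random-trimming}. Given the fractional marked-ball cover $\xi$ on $X_{T,r}$, \cref{cor:balanced-rounding} produces a probability measure $\nu_0$ on $\C(T)$ such that every sampled radius lies in $[0,r]$ and $E_{\nu_0}\le\pi_*\xi$. Since $0<r\le L$, the hypotheses of \cref{cor:random-trimming} hold: almost every sampled radius lies in $[0,r]\subseteq[0,L]$. That corollary then gives a probability measure $\nu$ on $\C(T,0)$ with $E_\nu\le E_{\nu_0}$. Transitivity of the setwise order of measures yields $E_\nu\le\pi_*\xi$.

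Two routine points need a line each. First, $E_{\nu_0}$ and $\pi_*\xi$ are measures on $[0,r]$, while $E_\nu$ is defined on $\R_{\ge0}$; we regard all of them as measures on $\R_{\ge0}$ carried by $[0,r]$, and the inequality holds for every Borel set $A\subseteq\R_{\ge0}$ after intersecting with $[0,r]$. Second, the trimming map in \cref{cor:random-trimming} only deletes radii. Hence its push-forward satisfies $E_\nu(A)\le E_{\nu_0}(A)$ setwise, even though the $0$-good subtuple may require new centers. These centers are irrelevant, since $\C(T,0)$ records radii only.

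There is essentially no obstacle, because all the measurability work is already done in the two corollaries: the Borel radius-list map, the null-set replacement by $\mathbf c^*$, and the Borel trimming map. The only thing to check is that the hypothesis $r\le L$ is exactly what \cref{cor:random-trimming} needs. It is needed so that \cref{lem:trimming} applies, via the bound $s_i\le L$, which makes the single-ball case $k=1$ $0$-good.
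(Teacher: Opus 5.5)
Your proposal is correct and is the same argument as the paper's proof: apply \cref{cor:balanced-rounding} to get $\nu_0$ with radii in $[0,r]$ and $E_{\nu_0}\le\pi_*\xi$, then apply \cref{cor:random-trimming}, which is valid because $r\le L$, and conclude by transitivity. Your side remark is also accurate: in \cref{lem:trimming}, the bound $s_i\le L$ is used only in the one-ball case $k=1$.
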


\begin{proof}
Apply \cref{cor:balanced-rounding}, and then apply \cref{cor:random-trimming}.
\end{proof}

\section{Local replacement and the proof of the main theorem}\label{sec:local}
\subsection{Exact replacement and tree decompositions}

If $0<s<r$, then we view a measure on $T\times[0,s]$ as a measure on $T\times[0,r]$ by the natural inclusion.

\begin{definition}
A replacement certificate for $(T,r)$ consists of a finite positive measure $\eta$ on $X_{T,r}$, scales $0<s_1,\ldots,s_q<r$, and weights $w_1,\ldots,w_q\ge0$ such that $c_\eta(x)+\sum_{j=1}^q w_j\ge1$ for every $x\in T$, and
\begin{equation}\label{eq:replacement-budget}
 \pi_*\eta+\sum_{j=1}^q w_j\Lam{T}{s_j}\le\Lam{T}{r}.
\end{equation}
Terms with zero weight are omitted.
\end{definition}

We now recast the bootstrapping calculation in the proof of Norin and Turcotte's Theorem~4.1 as a zero-error certificate that records the unresolved scales.

\begin{lemma}[After Norin--Turcotte, proof of Theorem~4.1~\cite{NorinTurcotte}]\label{lem:exact-replacement}
Let $|T|=L>0$ and $r>0$. Suppose that a random finite marked cover $\mathsf P_0$ of $T$ has expected radius measure
\[
 \pi_*I_{\mathsf P_0}=\sum_{i=1}^k\alpha_i U[a_i,r],
 \qquad 0\le a_i<r,
 \quad \alpha_i\ge0,
\]
and $\sum_{i=1}^k\alpha_i(r+a_i)\le L$. Then $(T,r)$ has a replacement certificate whose positive scales are among $a_1,\ldots,a_k$.
\end{lemma}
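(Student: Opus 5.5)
The plan is to take $\eta$ to be a scaled copy of the intensity of $\mathsf P_0$ and to let the unresolved scales be exactly the $a_i$. The leftover weights are tuned so that the uniform budget closes with zero error. Put $\eta_0=I_{\mathsf P_0}$. Since $\mathsf P_0$ has finite mean mass, $\eta_0$ is a finite positive measure on $X_{T,r}$.

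\emph{Coverage of $\eta_0$.} Every realization of $\mathsf P_0$ is a finite marked cover. Applying \eqref{eq:campbell-finite} with $f=\one_{\{x\in B(\cdot)\}}$ gives $c_{\eta_0}(x)=\int c_\eta(x)\,d\mathsf P_0(\eta)\ge1$ for every $x\in T$. For a parameter $\theta\in(0,1]$ we set $\eta=\theta\eta_0$, so $c_\eta\ge\theta$. We use the scales $s_i=a_i$ with $a_i>0$ and weights
\[
 w_i=\theta\,\frac{\alpha_i a_i^2}{L(r-a_i)} ,
\]
and omit the terms with $a_i=0$, which would have weight $0$ anyway.

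\emph{Budget inequality, term by term.} We check \eqref{eq:replacement-budget} separately for each $i$ by comparing densities on $[0,r]$. The measure $\theta\alpha_iU[a_i,r]$ has density $\theta\alpha_i/(r-a_i)$ on $[a_i,r]$. The measure $w_i\Lam{T}{a_i}=w_i(L/a_i)U[0,a_i]$ has density $w_iL/a_i^2=\theta\alpha_i/(r-a_i)$ on $[0,a_i]$. Their sum is therefore the constant density $\theta\alpha_i/(r-a_i)$ on all of $[0,r]$. This constant equals $c_i(L/r^2)$, where
\[
 c_i=\frac{\theta\alpha_i r^2}{L(r-a_i)} .
\]
Hence $\theta\alpha_iU[a_i,r]+w_i\Lam{T}{a_i}=c_i\Lam{T}{r}$. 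The same identity holds when $a_i=0$, with $w_i=0$. Summing over $i$, the budget inequality reduces to the single scalar condition $\sum_ic_i\le1$.

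\emph{Choice of $\theta$.} The coverage condition $c_\eta(x)+\sum_iw_i\ge1$ holds as soon as $\theta(1+S)\ge1$, where
\[
 S=\sum_i\frac{\alpha_ia_i^2}{L(r-a_i)} .
\]
We choose $\theta=1/(1+S)$. Then $\sum_ic_i\le1$ becomes
\[
 \sum_i\frac{\alpha_ir^2}{r-a_i}\le L+\sum_i\frac{\alpha_ia_i^2}{r-a_i},
\]
that is, $\sum_i\alpha_i(r^2-a_i^2)/(r-a_i)=\sum_i\alpha_i(r+a_i)\le L$. This is exactly the hypothesis of the lemma, so the construction is a replacement certificate whose positive scales are among $a_1,\ldots,a_k$.

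The main point needing care is the algebraic identity just described: writing each $U[a_i,r]$ plus a multiple of $U[0,a_i]$ as a multiple of $U[0,r]$, and checking that the optimal $\theta$ turns the budget into precisely $\sum\alpha_i(r+a_i)\le L$. The measure-theoretic points are routine: finiteness of $\eta$, Borel coverage via Tonelli, and $\pi_*\eta=\theta\sum_i\alpha_iU[a_i,r]$.
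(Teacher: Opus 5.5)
Your proof is correct and is essentially the paper's proof. Your $\theta=1/(1+S)$ is exactly the paper's mixing weight $p_0=L/D$ with $D=L+\sum_i\alpha_i a_i^2/(r-a_i)$, and your $w_i$ are its $p_i$; the budget then reduces to $\sum_i\alpha_i(r+a_i)\le L$ in the same way, with your density-by-density check simply spelling out the paper's chained identity term by term.
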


\begin{proof}
We use the mixing weights from the proof of Norin and Turcotte's Theorem~4.1~\cite{NorinTurcotte}, but retain the unresolved scales instead of inserting an error factor. Let $I_+=\{i:a_i>0\}$. Set $D=L+\sum_i\alpha_i a_i^2/(r-a_i)$, $p_0=L/D$, and $p_i=\alpha_i a_i^2/((r-a_i)D)$. Then $p_0+\sum_i p_i=1$, and $p_i=0$ for $i\notin I_+$. Put $\eta=p_0I_{\mathsf P_0}$; for $i\in I_+$, set $w_i=p_i$ and $s_i=a_i$. Thus every retained scale is positive. Since every realization of $\mathsf P_0$ covers $T$, we have $c_{I_{\mathsf P_0}}\ge1$, and hence $c_\eta+\sum_{i\in I_+}w_i\ge p_0+\sum_{i\in I_+}p_i=1$.

For $i\in I_+$, we have $p_iL/a_i=p_0\alpha_i a_i/(r-a_i)$. Separating the indices with $a_i=0$ gives
\begin{align*}
 \pi_*\eta+\sum_{i\in I_+}w_i\Lam{T}{a_i}
 &=p_0\sum_i\alpha_i U[a_i,r]
   +\sum_{i\in I_+}p_i\frac{L}{a_i}U[0,a_i]\\
 &=p_0\sum_{i\in I_+}\alpha_i
   \left(U[a_i,r]+\frac{a_i}{r-a_i}U[0,a_i]\right)
   +p_0\sum_{i\notin I_+}\alpha_iU[0,r]\\
 &=p_0\left(\sum_i\frac{\alpha_i r}{r-a_i}\right)U[0,r].
\end{align*}
The last identity also includes $a_i=0$, for which $\alpha_i r/(r-a_i)=\alpha_i$. We only need to show that the final coefficient is at most $L/r$. After multiplication by $rD/L$, the required inequality is $r^2\sum_i\alpha_i/(r-a_i)\le L+\sum_i\alpha_i a_i^2/(r-a_i)$. Indeed, after moving the second sum to the left, its left-hand side becomes $\sum_i\alpha_i(r^2-a_i^2)/(r-a_i)=\sum_i\alpha_i(r+a_i)\le L$. Thus \eqref{eq:replacement-budget} holds.
\end{proof}

The concatenation and mixture rules of Norin and Turcotte give the following certificate reformulation.

\begin{lemma}[Norin--Turcotte, Lemma~4.2(b),(c)~\cite{NorinTurcotte}]\label{lem:composition}
Let a certificate for $(T,r)$ have resolved measure $\eta$ and remaining terms $(w_j,s_j)$. For each $j$, let a certificate for $(T,s_j)$ have resolved measure $\eta_j$ and remaining terms $(u_{jh},t_{jh})$. Then $\eta'=\eta+\sum_j w_j\eta_j$ and the terms $(w_j u_{jh},t_{jh})$ form a certificate for $(T,r)$.
\end{lemma}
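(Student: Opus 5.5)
This follows directly from the definition of a replacement certificate. We check the two defining conditions for $\eta'$ and the terms $(w_ju_{jh},t_{jh})$: pointwise coverage and the budget inequality \eqref{eq:replacement-budget}. Before that, we confirm the format. Each new scale $t_{jh}$ satisfies $0<t_{jh}<s_j<r$, so it is an admissible scale for $(T,r)$. Each $\eta_j$ lives on $X_{T,s_j}$, which embeds in $X_{T,r}$ by the natural inclusion, so $\eta'$ is a finite positive measure on $X_{T,r}$. All new weights $w_ju_{jh}$ are nonnegative.

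For coverage, fix $x\in T$. The coverage function is linear in the measure, so
\[
 c_{\eta'}(x)+\sum_{j,h}w_ju_{jh}
 =c_\eta(x)+\sum_j w_j\Bigl(c_{\eta_j}(x)+\sum_h u_{jh}\Bigr).
\]
By the certificate property for $(T,s_j)$, each bracket is at least $1$. The right side is therefore at least $c_\eta(x)+\sum_j w_j$, which is at least $1$ by the certificate for $(T,r)$.

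For the budget, the push-forward $\pi_*$ is linear, and the inclusion $X_{T,s_j}\subseteq X_{T,r}$ commutes with the radius map. Hence
\[
 \pi_*\eta'+\sum_{j,h}w_ju_{jh}\Lam{T}{t_{jh}}
 =\pi_*\eta+\sum_j w_j\Bigl(\pi_*\eta_j+\sum_h u_{jh}\Lam{T}{t_{jh}}\Bigr).
\]
By \eqref{eq:replacement-budget} for $(T,s_j)$, each bracket is at most $\Lam{T}{s_j}$, and measure order is preserved under multiplication by $w_j\ge0$ and under addition. The whole expression is therefore at most $\pi_*\eta+\sum_jw_j\Lam{T}{s_j}$, which is at most $\Lam{T}{r}$ by the original certificate.

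The only step needing care is bookkeeping: the inclusion convention must be compatible with $\pi_*$ and with the coverage function. It is, because a marked ball $(v,s)$ keeps the same underlying ball and the same radius under the inclusion. There is no real obstacle; the argument is two linear estimates.
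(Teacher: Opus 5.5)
Your proof is correct. You verify the coverage and budget conditions as linear identities followed by the two certificate inequalities. Along the way you correctly use $w_j\ge0$, the scale bound $0<t_{jh}<s_j<r$, and the fact that the inclusion $X_{T,s_j}\subseteq X_{T,r}$ preserves both $\pi_*$ and the coverage function.

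The paper gives no separate argument; it attributes the lemma to the concatenation and mixture rules of Norin and Turcotte (their Lemma~4.2(b),(c)). Your two estimates are the direct check those rules amount to in certificate language, so your write-up makes this step self-contained.
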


For a nontrivial metric tree $T$, define
\begin{equation}\label{eq:lambda-def}
 \lambda(T)=\inf_{T=T_1\cup T_2}\max\{|T_1|,|T_2|\},
\end{equation}
where the infimum is over all two-piece decompositions. A two-piece decomposition attaining this infimum is an \emph{optimal split}.

The proof of Norin and Turcotte's Theorem~6.4 contains the optimal-split facts that we need. We record them in our notation.

\begin{lemma}[Norin--Turcotte, proof of Theorem~6.4~\cite{NorinTurcotte}]\label{lem:lambda-basic}
If $T$ is a nontrivial finite metric tree of length $L$, then the infimum in \eqref{eq:lambda-def} is attained and $L/2\le\lambda(T)\le2L/3<L$. Consequently, every optimal split has two nontrivial sides, and $T$ is $\lambda(T)$-minimal in the terminology of Norin and Turcotte~\cite{NorinTurcotte}.
\end{lemma}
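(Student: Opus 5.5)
The plan is to treat the two bounds, the attainment, and the structural consequences separately. All of them come from the observation that a two-piece decomposition $T=T_1\cup T_2$ satisfies $|T_1|+|T_2|=L$.

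\emph{Lower bound.} Since $|T_1|+|T_2|=L$, we have $\max\{|T_1|,|T_2|\}\ge L/2$ for every two-piece decomposition. Hence $\lambda(T)\ge L/2$.

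\emph{Upper bound.} Apply \cref{cor:removable-piece} with $a=L/3$; this is allowed because $L\ge L/3$. It gives a two-piece decomposition $T=T_0\cup T_1$ with $L/3\le|T_1|\le2L/3$. Then $|T_0|=L-|T_1|\le2L/3$, so $\lambda(T)\le2L/3<L$.

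\emph{Attainment.} This is the only step with any content. I would parametrize all two-piece decompositions by a finite family of compact sets, as follows.
\begin{itemize}
\item If $T=T_1\cup T_2$ with $T_1\cap T_2=\{x\}$, then each component of $T\setminus\{x\}$ lies entirely in one side. So the decomposition is determined by $x$ together with a partition of the components of $T\setminus\{x\}$ into two classes; either class may be empty, which gives a trivial side $\{x\}$. By \cref{lem:components-at-point}, every such choice yields a two-piece decomposition.
\item Fix a finite combinatorial model of $T$. If $x$ is a vertex, there are finitely many partitions, so finitely many values of $\max\{|T_1|,|T_2|\}$.
\item If $x$ lies in the interior of an edge $e=[u,w]$, then $T\setminus\{x\}$ has two components. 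The side containing $u$ has length $\ell_u+d_T(u,x)$, where $\ell_u$ is the length of the part of $T$ hanging off $u$ away from $e$. This is affine in $x$. The only other partitions put both components on one side and give value $L$.
\item Extend the affine parametrization to the closed edge. At the endpoints it gives the decompositions at $u$ (or $w$) whose classes are the set of components through $e$ and its complement. These are legitimate decompositions, so each closed edge contributes a continuous function on a compact interval, and every value attained is the value of an actual two-piece decomposition.
\end{itemize}
The infimum in \eqref{eq:lambda-def} is therefore a minimum over finitely many continuous functions on compact intervals, plus finitely many vertex values. Hence it is attained.

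\emph{Consequences.} An optimal split with a trivial side would have $\max\{|T_1|,|T_2|\}=L>\lambda(T)$, so both sides of every optimal split are nontrivial. Finally, $|T|=L\ge\lambda(T)$, and an optimal split has both pieces of length at most $\lambda(T)$. This is exactly the definition of $\lambda(T)$-minimality in \cref{def:minimal-tree}.

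The main obstacle I expect is the compactness bookkeeping in the attainment step: making sure the endpoint limits of edge parametrizations correspond to genuine decompositions. The bounds themselves are immediate.
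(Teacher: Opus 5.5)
Your proof is correct, and it is more self-contained than the paper's. The paper proves only the lower bound itself (the two side lengths sum to $L$). It imports attainment, the bound $\lambda(T)\le 2L/3$, and the minimality statement from the optimal-split argument in Norin and Turcotte's proof of Theorem~6.4.

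You instead derive the upper bound from \cref{cor:removable-piece} with $a=L/3$. This works because the complementary piece then automatically has length in $[L/3,2L/3]$.

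You prove attainment by reducing the infimum to a minimum over two kinds of values:
\begin{enumerate}
\item finitely many vertex values, and
\item the ranges of finitely many continuous functions $t\mapsto\max\{\ell_u+t,\ell_w+|e|-t\}$ on closed edges.
\end{enumerate}
You correctly check that the endpoint values of these functions are realized by genuine, possibly trivial-sided, decompositions at the edge endpoints. The one step you leave implicit is that each component of $T\setminus\{x\}$ lies in a single side. This holds because $T_1\setminus\{x\}$ and $T_2\setminus\{x\}$ are disjoint and relatively closed in $T\setminus\{x\}$.

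Under the paper's \cref{def:minimal-tree}, $\lambda(T)$-minimality is exactly the existence of an optimal split together with $L\ge\lambda(T)$. Nontriviality of both sides follows from $\lambda(T)<L$, as you say.

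What your route buys is that the lemma rests only on \cref{lem:components-at-point} and \cref{cor:removable-piece}. It therefore depends on Norin--Turcotte's Lemma~2.4 rather than on the internals of their Theorem~6.4 proof. The paper's citation is shorter, but it asks the reader to extract these facts from that proof.
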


The lower bound follows because the two side lengths sum to $L$; the cited proof gives attainment, the upper bound $2L/3$, and the resulting minimality.

\begin{lemma}\label{lem:interval-split}
A nontrivial finite metric tree has two leaves if and only if it is a metric interval. If an interval $I$ has length $L$, then $\lambda(I)=L/2$, and every optimal split has two sides of length $L/2$.
\end{lemma}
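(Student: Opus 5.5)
For the first assertion I will use the handshake identity for finite trees. Choose a finite combinatorial model of $T$ containing every branch point and every leaf as a vertex; degree-two vertices may be suppressed. In any finite tree, $\sum_x(\deg(x)-2)=-2$, so the number of leaves equals $2+\sum_{\deg\ge3}(\deg-2)$. Hence a nontrivial tree has exactly two leaves if and only if it has no branch point. The converse direction is immediate, since an interval has exactly two leaves.

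If there is no branch point, every vertex of the model has degree at most two. The model is then a path, and its metric realization is the arc $T[x,y]$ between the two leaves, which is a metric interval of length $L$. Here I use that degree in the metric tree agrees with degree in any model at vertices, and equals two at interior points of edges.

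For the second assertion, \cref{lem:lambda-basic} already gives $\lambda(I)\ge L/2$. I will exhibit the split at the midpoint $m$ of $I=[0,L]$, namely $[0,L/2]\cup[L/2,L]$. This is a two-piece decomposition: both pieces are closed connected subsets, they meet in one point, and their lengths sum to $L$. Its maximum side length is $L/2$, so $\lambda(I)=L/2$.

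Now take an optimal split $I=T_1\cup T_2$. The lengths satisfy $|T_1|+|T_2|=L$ and $\max\{|T_1|,|T_2|\}=L/2$, which forces $|T_1|=|T_2|=L/2$. No structural description of the pieces is needed for this, since the statement only asks about the side lengths.

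The only mildly delicate point is the identification of metric degree with model degree in the first assertion. I will handle it by subdividing the model so that every point of degree other than two is a vertex. Everything else is routine.
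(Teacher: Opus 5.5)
Your proposal is correct and follows essentially the same route as the paper: the leaf-count identity $|L(T)|=2+\sum_{\deg\ge3}(\deg-2)$ in a model with degree-two vertices suppressed, followed by the midpoint split and the fact that the two side lengths sum to $L$. The only difference is cosmetic. You take the lower bound $\lambda(I)\ge L/2$ from \cref{lem:lambda-basic} (it holds for every tree), whereas the paper first observes that every two-piece decomposition of an interval is a cut at one point; as you note, that structural description is not actually needed.
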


\begin{proof}
An interval has exactly its two endpoints as leaves. Conversely, suppress the degree-two vertices in a finite combinatorial model of a metric tree. For the resulting finite tree, $|L(T)|=2+\sum_{v:\,\deg(v)\ge3}(\deg(v)-2)$. Thus a tree with exactly two leaves has no branch vertex and is a path, hence a metric interval.

Every two-piece decomposition of an interval is obtained by cutting at one point. If the two side lengths are $x$ and $L-x$, then
$\max\{x,L-x\}\ge L/2$, while the midpoint realizes equality. Hence $\lambda(I)=L/2$. In an optimal split the two side lengths sum to $L$ and both are at most $L/2$, so both equal $L/2$.
\end{proof}

We use the following local lemma of Norin and Turcotte.

\begin{lemma}[Norin--Turcotte, Lemma~3.3~\cite{NorinTurcotte}]\label{lem:NTlocal}
Let $|T|=L$ and $\lambda=\lambda(T)$. There is a number $0\le a\le\min\{\lambda/2-L/4,L/12\}$ such that, with $A=L/4+a$, $B=L/4-3a$, and $S=A+B=L/2-2a$, the pair $(B-x,A+x)$ covers $T$ for every $0\le x\le B$, and one ball of every radius $y\ge S$ covers $T$.
\end{lemma}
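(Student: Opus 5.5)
The statement is Norin and Turcotte's Lemma~3.3, so the quickest route is to cite it. A self-contained proof would run as follows. Throughout, I reduce ball covers to diameters using \cref{lem:tree-radius}: a metric subtree $S'$ lies in one ball of radius $y$ as soon as $\diamT(S')\le 2y$.

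\emph{Choice of $a$ and the one-ball claim.} Put $D=\diamT(T)$ and set
\[
 a=\min\{\lambda/2-L/4,\;L/12,\;(L-D)/4\}.
\]
Then $a\ge0$, because $\lambda\ge L/2$ by \cref{lem:lambda-basic} and $D\le L$. With this choice the one-ball claim is immediate. Since $S=L/2-2a\ge D/2$, a ball centred at a center of $T$ with any radius $y\ge S$ covers $T$.

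\emph{Reduction of the pair claim.} For $0\le x\le B$, put $t=B-x\in[0,B]$; the larger radius is then $S-t$. Since $0\le a\le L/12$, we have $A\ge B\ge0$. It therefore suffices to find, for each $t$, a decomposition $T=T'\cup T''$ into metric subtrees with
\[
 \diamT(T'')\le 2t,\qquad \diamT(T')\le 2(S-t).
\]
At $t=0$ this is the one-ball claim with $T''$ a point. I would build $T''$ as the set of points at distance at most $t$ from a suitable point $p$, possibly trimmed to a branch. The natural choice is to fix an optimal split $T=T_1\cup T_2$ at anchor $p$ with $|T_1|=\lambda\ge|T_2|$ (by \cref{lem:lambda-basic} both sides are nontrivial), and to grow $T''$ from $p$ into the larger side $T_1$. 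The remaining piece $T'$ then contains $T_2$ together with the part of $T_1$ farther than $t$ from $p$.

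\emph{The main obstacle.} The hard step is bounding $\diamT(T')$. Crude length bounds fail: a diameter of at most $2(S-t)$ would require length about $L-4a-2t$, which forces $|T''|\ge 4a+2t$, while $\diamT(T'')\le2t$ allows $|T''|$ to be small. So the estimate must be genuinely metric. I would take a diametral pair $u,w$ of $T'$ and bound $d(u,w)$ by routing through $p$.
\begin{enumerate}
 \item If $u,w$ lie on different sides of $p$, then each part of $d(u,w)$ is controlled by the length of the corresponding piece beyond the removed region of radius $t$.
 \item If $u,w$ lie on the same side, the diameter is controlled by that side's length, which is at most $\lambda\le L-\lambda+2\cdot(\lambda/2-L/4)\cdots$; this is where the constraint $a\le\lambda/2-L/4$ must enter.
\end{enumerate}
Case~1 should give $d(u,w)\le L-2t-(\text{removed length})$, and the constraint $a\le L/12$ should be what guarantees that a ball of radius $t\le B$ about $p$ removes enough length. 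If the naive choice of $p$ fails for some intermediate $t$, I would instead let the center of $T''$ slide continuously along a diametral path from $p$ toward a leaf as $t$ grows. Interpolating between the two extreme cases $t=0$ and $t=B$, where the optimal split itself should give $\diamT(T_2)\le 2B$ after the choice of $a$, is the step I expect to need the most care.
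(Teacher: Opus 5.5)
Citing Norin and Turcotte's Lemma~3.3 is exactly what the paper does. It imports the statement without giving a proof of its own, so your first sentence already matches the paper. The self-contained sketch, however, has a genuine gap.

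Your choice $a=\min\{\lambda/2-L/4,\,L/12,\,(L-D)/4\}$ does settle the one-ball claim. But the pair claim, which carries all the content, is never proved, and the construction you propose for it fails.

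\begin{itemize}
\item Growing $T''=B_T(p,t)\cap T_1$ from the anchor cannot work. For an interval of length $L$ your recipe gives $a=0$ and $S=L/2$, and $T''$ sits next to the midpoint. The remainder $T'$ is then disconnected and contains both endpoints, which are at distance $L>2(S-t)$ for every $t>0$. Two balls of radii $t$ and $L/2-t$ cover an interval only by tiling it, with the small ball at an end.
\item Your endpoint claim, that the optimal split itself gives $\diamT(T_2)\le 2B$, is false. Take the spider with legs $3/2,1,1$. Then $L=7/2$, $D=5/2$ and $\lambda=2$, hence $a=1/8$, $A=1$ and $B=1/2$. But $T_2$ is the leg of length $3/2$, so $\diamT(T_2)=3/2>2B$.
\item The bound in your second case, $\lambda\le L-\lambda+2(\lambda/2-L/4)$, simplifies to $\lambda\le L/2$. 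That fails for every non-balanced split.
\end{itemize}

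In that spider, the correct covers use two balls:
\begin{itemize}
\item a large ball of radius $3/2-t$ centered at the branch point;
\item a small ball of radius $t$ on the long leg, centered at distance $3/2-t$ from the branch point, so that it covers the outer segment of length $2t$.
\end{itemize}

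The missing idea is this placement. The small ball belongs at the far end of a deepest branch (or of a diametral path), away from the anchor, and the large ball goes at or near the anchor. One must then prove that the points at distance more than $S-t$ from the large center fit in one ball of radius $t$. This is where the piece lengths from \cref{prop:three-piece} and the constraints on $a$ have to enter.

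For instance, suppose $a=\lambda/2-L/4>0$, so that $S=L-\lambda$. If only one piece reaches farther than $S-t$ from the anchor, its length bound $L-\lambda$ confines those points to a subtree of length at most $2t$. A single ball of radius $t$ then covers them.

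Your sketch contains no estimate of this kind, and the sliding-center interpolation is left unspecified. As written, only the citation establishes the lemma.
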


We combine Norin and Turcotte's local family and mixing rules~\cite[Lemmas~3.3 and~4.2, and proof of Theorem~4.1]{NorinTurcotte} with \cref{lem:exact-replacement}. The new conclusion is the uniform recursive bound on every unresolved scale.

\begin{proposition}\label{prop:local-replacement}
Let $|T|=L\ge2r$ and suppose that $\lambda(T)<2r$. Then $(T,r)$ has a replacement certificate in which every positive scale $s_j$ satisfies $s_j\le\lambda(T)/2$.
\end{proposition}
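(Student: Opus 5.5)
The plan is to feed the local two-ball family of \cref{lem:NTlocal} into \cref{lem:exact-replacement}, with one modification: the small radii are not resolved but recorded as an unresolved term at a scale at most $\lambda(T)/2$. Write $\lambda=\lambda(T)$ and let $a,A,B,S$ be as in \cref{lem:NTlocal}. Since $a\le\lambda/2-L/4$, we get $A=L/4+a\le\lambda/2<r$. By \cref{lem:lambda-basic}, $L/4\le\lambda/2$, and therefore $B\le L/4\le\lambda/2$. So every radius below $B$ sits at a scale already allowed by the statement. The large radii $A+x$ are the ones that must be resolved at scale $r$.

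\emph{Case $S\le r$.} I will mix two families:
\begin{itemize}
\item the pairs $(B-x,A+x)$ with $x$ uniform on $[0,B]$, whose expected radius measure is $U[0,B]+U[A,S]$;
\item single balls of radius uniform on $[S,r]$, which cover $T$ by \cref{lem:NTlocal}.
\end{itemize}
Choosing the mixing weights proportional to the interval lengths $S-A=B$ and $r-S$ makes the large-radius part of the expected radius measure a multiple of $U[A,r]$. The small part $U[0,B]$ is set aside.

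\emph{Case $S>r$.} I restrict to $x\in[0,r-A]$. The large radii are then uniform on $[A,r]$ and the small radii are uniform on $[B-(r-A),B]$. The latter measure is dominated by $\frac{B}{r-A}U[0,B]$, which is a multiple of $U[0,B]$.

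In both cases I do not apply \cref{lem:exact-replacement} verbatim, because its hypothesis requires the whole radius measure to have the form $\sum\alpha_iU[a_i,r]$. Instead I rerun its proof by hand for an enlarged certificate:
\begin{itemize}
\item the resolved measure is $\eta=p_0\cdot(\text{large-ball part of }I_{\mathsf P_0})$, with intensity $p_0\alpha U[A,r]$;
\item the small balls are dropped from $\eta$ and replaced by an unresolved term of weight $w_B$ at scale $B$, chosen so that $w_B\Lam{T}{B}$ dominates $p_0$ times the small-radius measure, i.e.\ $w_B\,L/B\ge p_0\cdot(\text{mass of small radii})$;
\item an unresolved term of weight $p_A=\alpha p_0A/L\cdot A/(r-A)$ at scale $A$ fills $U[A,r]$ down to $U[0,r]$, exactly as in the identity in the proof of \cref{lem:exact-replacement}.
\end{itemize}
Coverage must be rechecked, since each realization loses its small ball. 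A point $x$ is covered by the large ball with probability $q(x)$ and by the small ball otherwise. Hence
\[
 c_\eta(x)+w_B+p_A\ge p_0q(x)+w_B+p_A,
\]
and I will require $p_0+p_A=1$ and $w_B\ge p_0$ as one sufficient normalization. All positive scales are then $A$ and $B$, both at most $\lambda/2$.

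The main obstacle is the budget inequality \eqref{eq:replacement-budget}. It now reads
\[
 p_0\alpha U[A,r]+p_A\Lam{T}{A}+w_B\Lam{T}{B}\le\Lam{T}{r},
\]
and the term $w_B\Lam{T}{B}$ contributes mass $w_BL/B$ concentrated on $[0,B]$. Taking $w_B\ge p_0$ is probably too crude. The first thing I would try is to keep the small balls inside $\eta$ rather than converting them to a weight. This makes the small-radius part an extra summand of the form $\beta U[0,B]$, and the requirement becomes that $p_0\beta U[0,B]$ fits under the slack of $\Lam{T}{r}$ on $[0,B]$ after the $U[A,r]$ filling. Using the constraints $a\le L/12$, $L\ge2r$ and $\lambda<2r$, this should reduce to a single scalar inequality in the style of $\sum\alpha_i(r+a_i)\le L$. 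If that inequality fails near the extremes (e.g.\ $S$ close to $r$), I would treat $U[0,B]$ as a genuine sub-certificate at scale $B$ and combine it through \cref{lem:composition}, which only produces scales $\le B\le\lambda/2$.
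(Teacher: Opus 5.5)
Your proposal leaves open the step that carries the proposition, the budget inequality \eqref{eq:replacement-budget}, and neither of your constructions can satisfy it.

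The first construction drops the small balls, charges $w_B\ge p_0$ at scale $B$, and takes $p_0+p_A=1$. This puts total unresolved weight $w_B+p_A\ge1$ at scales below $r$. Since $\Lam{T}{s}$ has mass $L/s>L/r$ for $s<r$, the left side of \eqref{eq:replacement-budget} already has mass exceeding $L/r$.

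The second construction keeps the small balls in $\eta=p_0I_{\mathsf P_0}$, and it fails in general too. For $s<r$ and $0<t\le r$ one has $\Lam{T}{s}([0,t])\ge Lt/r^2=\Lam{T}{r}([0,t])$. Since you only know $c_{I_{\mathsf P_0}}\ge1$, coverage requires $p_0+\sum_jw_j\ge1$. Evaluating the budget on $[0,t]$ then forces $\mu([0,t])\le Lt/r^2$ for the radius measure $\mu$ of your random cover, whatever unresolved scales you add.
\begin{itemize}
\item In your case $S\le r$, $\mu([0,t])=t/(r-A)$ for $t\le B$. But $L(r-A)-r^2=-(r-L/2)^2-aL<0$, so the test fails on all of $(0,B]$ as soon as $a,B>0$.
\item In your case $S>r$, the small radii are uniform on $[S-r,B]$. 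The test at $t=B$ requires $r^2\le LB$, which fails for $r$ near $S$ because $S^2-LB=aL+4a^2$.
\end{itemize}
The fallback through \cref{lem:composition} cannot help. Composition only refines unresolved terms that the outer certificate must already afford, so it never repairs a violated outer budget. Moreover, the small balls alone do not cover $T$, so they are not a certificate at scale $B$.

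The missing idea is to enlarge the small ball, which keeps a cover a cover, so that the small radii become contiguous with the large ones.
\begin{itemize}
\item For $A<r\le S$, the paper sets $x=t(r-A)$ with $t$ uniform and raises the small radius from $B-t(r-A)$ to $A-t(r-A+4a)$, an increase of $4a(1-t)\ge0$. With $C=S-r$, the radius measure becomes $U[C,A]+U[A,r]=\alpha_CU[C,r]+\alpha_AU[A,r]$, where $\alpha_C=(r-C)/(r-B)$ and $\alpha_A=4a/(r-B)$. Its $\wgt$ equals $3A+B=L$, so \cref{lem:exact-replacement} applies verbatim with scales $C,A\le\lambda/2$.
\item Your version has $\wgt(U[C,B]+U[A,r])=L-4a$. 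The unused slack $4a$ is exactly what must be spent to spread the small radii upward, lowering their density from $1/(r-A)$ to $1/(r-B)$.
\item For $S<r$, the paper never uses the pairs, only one-ball covers with radius uniform on $[S,r]$. This gives the single term $U[S,r]$ with $S+r\le L$.
\item If then $S>\lambda/2$, the paper checks $B>0$ (otherwise $a=L/12$, forcing $\lambda=2L/3$ and $S=\lambda/2$). It then composes with the first case at target radius $S$, where $C=0$ and only the scale $A$ remains.
\end{itemize}
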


\begin{proof}
We use $a,A,B,S$ from Norin and Turcotte's local lemma, stated as \cref{lem:NTlocal}. Since $A=L/4+a\le\lambda(T)/2<r$ and $r\le L/2$, we have two cases.

Suppose first that $A<r\le S$, and put $C=S-r$. Then $0\le C<B\le A$. For $0\le t\le1$, set $x=t(r-A)$. Since $r-A\le B$, the pair $(B-t(r-A),A+t(r-A))$ covers $T$. Increase the first radius to $A-t(r-A+4a)$. The increase is $4a(1-t)\ge0$. Thus the new pair is still a cover. Its first coordinate runs from $A$ to $C$, and its second coordinate runs from $A$ to $r$. Uniform $t$ and the Borel lifting in \eqref{eq:borel-lifting} give a random finite marked cover $\mathsf P_0$ with $\pi_*I_{\mathsf P_0}=U[C,A]+U[A,r]$. We have
\begin{equation}\label{eq:local-decomp}
 U[C,A]+U[A,r]=\alpha_CU[C,r]+\alpha_AU[A,r],
\end{equation}
where $\alpha_C=(r-C)/(r-B)$ and $\alpha_A=4a/(r-B)$. Applying $\wgt$ to \eqref{eq:local-decomp} gives $\alpha_C(r+C)+\alpha_A(r+A)=(C+A)+(A+r)=3A+B=L$. Thus \cref{lem:exact-replacement} applies. The possible remaining scales are $C$ and $A$; every positive remaining scale is at most $A\le\lambda(T)/2$.

Now suppose that $S<r$. Every $y\in[S,r]$ gives a one-ball cover. Using the layerwise Borel center selection in \eqref{eq:borel-lifting}, a uniform choice of $y$ gives a random finite marked cover with expected radius measure $U[S,r]$. Since $S+r\le L$, \cref{lem:exact-replacement} gives a certificate whose only possible positive scale is $S$. If $S\le\lambda(T)/2$, we are done.

Assume $S>\lambda(T)/2$. We first show that $B>0$. Otherwise $a=L/12$. Since $a\le\lambda(T)/2-L/4$ and $\lambda(T)\le2L/3$, this forces $\lambda(T)=2L/3$. It then gives $S=L/2-2a=L/3=\lambda(T)/2$, a contradiction. Thus $B>0$, and consequently $A<S=A+B$. We apply the first case with target radius $S$. In that application the new value of $C$ is zero, so its only possible positive remaining scale is $A\le\lambda(T)/2$. Compose the two certificates by \cref{lem:composition}.
\end{proof}

We next describe the two possible forms of an optimal split. By \cref{lem:lambda-basic}, both sides are nontrivial. Fix an optimal split
\begin{equation}\label{eq:optimal-split}
 T=A\cup B,
 \qquad A\cap B=\{v\},
 \qquad |A|=\lambda\ge b=|B|>0.
\end{equation}
Then $|T|=\lambda+b$ and $\lambda\ge|T|/2$. The split is balanced if $\lambda=b=|T|/2$, and non-balanced if $\lambda>b$.

The optimal-split argument of Norin and Turcotte~\cite[proof of Theorem~6.4]{NorinTurcotte} gives the following elementary gap property.

\begin{lemma}\label{lem:branch-gap}
No branch of $T$ has length strictly between $b$ and $\lambda$.
\end{lemma}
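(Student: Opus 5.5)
The lemma should follow directly from the definition of $\lambda(T)$ as an infimum over two-piece decompositions. The plan is to turn a hypothetical branch of intermediate length into a two-piece decomposition that beats the optimal split \eqref{eq:optimal-split}.

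Suppose, for contradiction, that $J$ is a branch of $T$ with $b<|J|<\lambda$. By definition of a branch, both $J$ and $\overline J$ are nontrivial metric subtrees. By \cref{lem:branch-boundary}, $\{J,\overline J\}$ is then a two-piece decomposition of $T$, so $|J|+|\overline J|=|T|=\lambda+b$. Hence $|\overline J|=\lambda+b-|J|$. From $b<|J|<\lambda$ we obtain $b<|\overline J|<\lambda$ as well: the map $t\mapsto\lambda+b-t$ swaps the endpoints of the open interval $(b,\lambda)$.

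Therefore $\max\{|J|,|\overline J|\}<\lambda=\lambda(T)$. This contradicts the definition \eqref{eq:lambda-def}, since $\{J,\overline J\}$ is a competitor in the infimum. Note that when $b=\lambda$ (the balanced case) the interval is empty and there is nothing to prove.

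There is no real obstacle. The only point requiring care is to invoke \cref{lem:branch-boundary} so that $\{J,\overline J\}$ is a genuine two-piece decomposition whose lengths add to $|T|$. The hypothesis that $J$ is a branch, with both $J$ and $\overline J$ nontrivial, is exactly what that lemma needs.
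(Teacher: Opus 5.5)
Your proof is correct and is essentially the paper's argument. You use \cref{lem:branch-boundary} to make $\{J,\overline J\}$ a two-piece decomposition, note that $|\overline J|=\lambda+b-|J|$ also lies in $(b,\lambda)$, and conclude that $\max\{|J|,|\overline J|\}<\lambda$ contradicts the optimality of the split \eqref{eq:optimal-split}.
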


\begin{proof}
Using the optimality criterion from Norin and Turcotte~\cite[proof of Theorem~6.4]{NorinTurcotte}, suppose that a branch $J$ satisfies $b<|J|<\lambda$. Then $\{J,\overline J\}$ is a two-piece decomposition and $b<|\overline J|=|T|-|J|=b+\lambda-|J|<\lambda$. Both sides have length less than $\lambda$, contrary to the choice of \eqref{eq:optimal-split}.
\end{proof}

We record a consequence of optimality.

\begin{lemma}\label{lem:leaf-monotonicity}
If $S=S_1\cup S_2$ is a two-piece decomposition into nontrivial subtrees, then $|L(S_i)|\le|L(S)|$ for $i=1,2$.
\end{lemma}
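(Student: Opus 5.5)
We prove \cref{lem:leaf-monotonicity} by comparing degrees point by point. Let $S=S_1\cup S_2$ with $S_1\cap S_2=\{u\}$, and fix $i$; by symmetry take $i=1$. The idea is to build an injection from $L(S_1)$ into $L(S)$. The only point where the two trees can have different local structure is the junction point $u$, so we treat leaves other than $u$ first and then handle $u$ separately.

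\emph{Leaves of $S_1$ other than $u$.} Let $x\in L(S_1)$ with $x\ne u$. Since $S_2$ is closed and $x\notin S_2$, a small neighborhood of $x$ in $S$ lies inside $S_1$. Hence $\deg_S(x)=\deg_{S_1}(x)=1$, and $x\in L(S)$. This shows $L(S_1)\setminus\{u\}\subseteq L(S)$. If $u\notin L(S_1)$, we are already done.

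\emph{The case $u\in L(S_1)$.} Suppose $u$ is a leaf of $S_1$. We must find a leaf of $S$ that is not a leaf of $S_1$, to serve as the image of $u$. Because $S_2$ is nontrivial, it has a finite combinatorial model with at least two leaves. So $S_2$ has a leaf $y\ne u$. By the previous argument with the roles of $S_1$ and $S_2$ exchanged, $y$ is a leaf of $S$ and $y\notin S_1$. Mapping $u\mapsto y$ and every other leaf of $S_1$ to itself gives an injection $L(S_1)\to L(S)$. Therefore $|L(S_1)|\le|L(S)|$.

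\emph{Degree-locality fact.} The only step needing care is the claim that $\deg_S(x)=\deg_{S_1}(x)$ for $x\in S_1\setminus S_2$. This follows from \cref{lem:components-at-point}: for such $x$, the components of $S\setminus\{x\}$ correspond bijectively to the components of $S_1\setminus\{x\}$. Indeed, $S_2$ is connected and avoids $x$, so it lies entirely in the single component of $S\setminus\{x\}$ that contains $u$. Equivalently, in a common finite combinatorial model subdivided at $u$ and $x$, the edges incident to $x$ are the same in $S$ and in $S_1$. I expect no real obstacle beyond stating this locality fact cleanly.
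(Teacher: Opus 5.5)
Your proposal is correct and is essentially the paper's argument: leaves of $S_1$ other than the junction point remain leaves of $S$, and a possible new leaf at the junction is compensated by a leaf of $S$ lying in $S_2\setminus S_1$, which you obtain from the fact that the nontrivial tree $S_2$ has at least two leaves. Your version makes the injection and the degree-locality step explicit; the latter rests on your direct component-counting argument rather than on \cref{lem:components-at-point}, and that argument is sound.
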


\begin{proof}
Let $x=S_1\cap S_2$. Every leaf of $S_1$ other than possibly $x$ is a leaf of $S$, while the omitted subtree $S_2$ contains a leaf of $S$. Hence a possible new leaf at $x$ replaces an omitted leaf. The argument for $S_2$ is the same.
\end{proof}

We next refine the branch-transfer argument in the proof of Norin and Turcotte's Theorem~6.4~\cite{NorinTurcotte}. Its pairwise-union and leaf-loss conclusions are the form needed for our recursion.

\begin{proposition}\label{prop:three-piece}
Assume that $\lambda>b$ in \eqref{eq:optimal-split}. Then there are three nontrivial subtrees $T_1,T_2,T_3$, all containing $v$, such that
\[
 T=T_1\cup T_2\cup T_3,
 \qquad T_i\cap T_j=\{v\}\quad(i\ne j),
 \qquad |T_i|\le b.
\]
Each pairwise union has length at least $\lambda$ and has fewer leaves than $T$.
\end{proposition}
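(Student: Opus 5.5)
The plan is to find the three pieces as closures of components of $T\setminus\{v\}$, using only the branch-length gap of \cref{lem:branch-gap}. I would take $T_3=B$ and split $A$ at $v$ into two pieces $T_1,T_2$.

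\emph{Step 1: $A\setminus\{v\}$ has at least two components.} Suppose instead that $A\setminus\{v\}$ is connected. In a finite combinatorial model, a short initial segment of $A$ starting at $v$ is an arc $[v,v']$ of length $\varepsilon>0$ whose interior points have degree two. Let $A'=\cl_T(A\setminus T[v,v'])$. This is a metric subtree anchored at $v'$, its complement contains $B$ and is nontrivial, and so $A'$ is a branch of $T$. Its length is $\lambda-\varepsilon$, which lies strictly between $b$ and $\lambda$ once $\varepsilon<\lambda-b$. This contradicts \cref{lem:branch-gap}. I expect this step to be the main obstacle: one must check carefully that $A'$ is a genuine branch, i.e.\ closed, connected, with nontrivial complement. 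Subdividing the model at $v'$ should make this routine.

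\emph{Step 2: every proper union of components of $A$ is short.} Let $C_1,\ldots,C_p$, with $p\ge2$, be the closures of the components of $A\setminus\{v\}$. By \cref{lem:components-at-point}, the union of $\{v\}$ with any nonempty proper subfamily of the $C_i$ is a metric subtree $J$. Its complement $\overline J$ contains $B$, so $J$ is a branch. Since $J$ omits some $C_i$ of positive length, $|J|<\lambda$. \Cref{lem:branch-gap} then forces $|J|\le b$.

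Now set $T_1=C_1$, $T_2=C_2\cup\cdots\cup C_p$ and $T_3=B$. Each is nontrivial, contains $v$, and any two meet exactly in $\{v\}$. We have $|T_1|,|T_2|\le b$ by Step~2 and $|T_3|=b$.

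\emph{Step 3: the pairwise unions.} The union omitting $T_k$ has length $|T|-|T_k|\ge\lambda+b-b=\lambda$. For the leaf count, each omitted $T_k$ is a nontrivial tree meeting the rest of $T$ only at $v$. It therefore has at least two leaves, and at least one of them is different from $v$; that leaf is a leaf of $T$ which is lost. On the other hand, $v$ is not a leaf of the union, because two nontrivial pieces meet there and so its degree in the union is at least two. All other leaves of the union are leaves of $T$. Hence the union has strictly fewer leaves than $T$, which sharpens the counting argument of \cref{lem:leaf-monotonicity}.
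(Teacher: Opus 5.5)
Your proposal is correct and follows essentially the paper's proof: the same three pieces ($B$, one closed component $C\cup\{v\}$ of $A\setminus\{v\}$, and the union of the remaining ones), the same use of \cref{lem:branch-gap} to bound the two pieces of $A$ by $b$, and the same length and leaf-counting argument for the pairwise unions. The differences are cosmetic. You rule out a connected $A\setminus\{v\}$ through a branch of length $\lambda-\varepsilon$ and \cref{lem:branch-gap}, whereas the paper shifts the cut point and appeals to optimality directly. You should also state explicitly, as the paper does, that $\overline J=B\cup\bigcup_{\text{omitted}}C_i$ is connected, so that $\{J,\overline J\}$ really is a two-piece decomposition.
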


\begin{proof}
We refine the branch-transfer argument in the proof of Norin and Turcotte's Theorem~6.4~\cite{NorinTurcotte}. The set $A\setminus\{v\}$ has at least two components. Otherwise it has one component. Choose a sufficiently small $0<\varepsilon<\lambda-b$ and move the cut point a distance $\varepsilon$ into that component. The new side lengths are $\lambda-\varepsilon$ and $b+\varepsilon$, both less than $\lambda$, a contradiction.

Choose one component $C$ of $A\setminus\{v\}$, and put $D=A\setminus(C\setminus\{v\})$. By \cref{lem:components-at-point}, both $C\cup\{v\}$ and $D$ are subtrees. Set $T_1=B$, $T_2=C\cup\{v\}$, and $T_3=D$. All three are nontrivial. The complements of $T_2$ and $T_3$ are unions of the other components through $v$, so they are connected. By \cref{lem:branch-boundary}, $T_2$ and $T_3$ are branches of $T$. Both have length less than $\lambda$. By \cref{lem:branch-gap}, their lengths are at most $b$. Also $|T_1|=b$.

We have $|T_2\cup T_3|=\lambda$ and $|T_1\cup T_2|=|T|-|T_3|\ge|T|-b=\lambda$, with the same bound for $T_1\cup T_3$.

A pairwise union omits one nontrivial branch. Such a branch contains a leaf of $T$. Indeed, choose a point $z$ in the branch maximizing its distance from the anchor. Then $z$ is not the anchor; if $z$ were not a leaf of $T$, an incident edge lying in the same component would continue beyond $z$, contradicting maximality. Every leaf of the union other than possibly $v$ is a leaf of $T$, and $v$ is not a leaf because two nontrivial pieces remain incident with it. Thus at least one leaf is lost and no new leaf is added.
\end{proof}

If $S\subseteq T$ is a subtree, we view a measure on $S\times[0,r]$ as a measure on $T\times[0,r]$ by the inclusion $(v,s)\mapsto(v,s)$. Since $B_S(v,s)=S\cap B_T(v,s)$, this does not decrease coverage and does not change the radius measure.

\begin{proposition}\label{prop:half-sum}
Let $T=T_1\cup T_2\cup T_3$, where $T_i\cap T_j=\{v\}$ for $i\ne j$. Fix $r>0$. Suppose that every pairwise union $T_i\cup T_j$ has a fractional marked-ball cover $\xi_{ij}$ satisfying $\pi_*\xi_{ij}\le\Lam{T_i\cup T_j}{r}$. Then $\xi=\frac12(\xi_{12}+\xi_{13}+\xi_{23})$ is a fractional marked-ball cover of $T$ and $\pi_*\xi\le\Lam{T}{r}$.
\end{proposition}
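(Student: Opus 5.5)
We verify the two claims separately: coverage pointwise, and the radius budget by a length count. Each $\xi_{ij}$ lives on $(T_i\cup T_j)\times[0,r]$, and we view it as a measure on $X_{T,r}$ via the inclusion described just before the statement. Since $B_{T_i\cup T_j}(v',s)=(T_i\cup T_j)\cap B_T(v',s)$, the inclusion does not decrease coverage. So for $x\in T_i\cup T_j$ we have $c_{\xi_{ij}}(x)\ge1$ as a function on $T$, where coverage is computed with $T$-balls.

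\emph{Coverage.} Fix $x\in T$. If $x=v$, then $x$ lies in all three pairwise unions, so
\[
 c_\xi(v)\ge\tfrac12(1+1+1)=\tfrac32\ge1 .
\]
Otherwise $x$ lies in exactly one piece, say $T_1$, because $T_i\cap T_j=\{v\}$. Then $x$ belongs to $T_1\cup T_2$ and to $T_1\cup T_3$. Since all measures are positive,
\[
 c_\xi(x)\ge\tfrac12\bigl(c_{\xi_{12}}(x)+c_{\xi_{13}}(x)\bigr)\ge\tfrac12(1+1)=1 .
\]
Hence $\xi$ is a fractional marked-ball cover of $T$.

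\emph{Budget.} The push-forward $\pi_*$ is linear, so
\[
 \pi_*\xi=\tfrac12\sum_{i<j}\pi_*\xi_{ij}\le\tfrac12\sum_{i<j}\Lam{T_i\cup T_j}{r}=\frac1{2r}\Bigl(\sum_{i<j}|T_i\cup T_j|\Bigr)U[0,r].
\]
The pieces pairwise meet only in the single point $v$, so the lengths are additive: $|T_i\cup T_j|=|T_i|+|T_j|$ and $|T|=|T_1|+|T_2|+|T_3|$. (Equivalently, $\{T_1,T_2,T_3\}$ is a decomposition of $T$.) Summing over the three pairs counts each $|T_i|$ twice, so $\sum_{i<j}|T_i\cup T_j|=2|T|$. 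This gives $\pi_*\xi\le(|T|/r)U[0,r]=\Lam{T}{r}$.

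The only point needing care is that $\xi$ is a finite positive Borel measure on $X_{T,r}$ whose coverage function is computed with $T$-balls. The inclusion remark preceding the proposition handles this. There is no genuine obstacle: the argument is a double-counting identity together with the fact that every non-junction point lies in exactly two of the three pairwise unions.
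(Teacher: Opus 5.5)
Your proof is correct and matches the paper's argument. Coverage holds because every point of $T_i$ lies in the two pairwise unions containing $T_i$. The budget follows from linearity of $\pi_*$ together with the double count $\sum_{i<j}|T_i\cup T_j|=2|T|$.
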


\begin{proof}
A point in $T_i$ belongs to the two pairwise unions containing $T_i$, so its coverage is at least $1/2+1/2=1$. Also,
\begin{align*}
 \pi_*\xi
 &\le\frac{1}{2r}
 \bigl(|T_1\cup T_2|+|T_1\cup T_3|+|T_2\cup T_3|\bigr)U[0,r]\\
 &=\frac{|T|}{r}U[0,r]=\Lam{T}{r},
\end{align*}
where each $T_i$ is counted twice.
\end{proof}

If the optimal split in \eqref{eq:optimal-split} is balanced, then for every $r>0$,
\begin{equation}\label{eq:balanced-budget}
 \Lam{A}{r}+\Lam{B}{r}=\Lam{T}{r}.
\end{equation}

\subsection{The recursive construction}

For $m\ge2$, let $\mathsf P(m)$ denote the assertion that every finite metric tree with exactly $m$ leaves and length at least twice the target scale has a fractional marked-ball cover with the required uniform radius budget. We prove $\mathsf P(m)$ by strong induction on $m$. We use the following reduction at the $m$th induction step: it assumes only $\mathsf P(j)$ for $2\le j<m$, and it either resolves the current problem or replaces it by smaller subproblems with the same number of leaves while preserving coverage and budget.

There are two independent decreases. A non-balanced split is resolved using trees with fewer leaves. A balanced split may retain a tree with the same number of leaves, but its length is halved. In the local-replacement case, every new scale $s_j$ satisfies $2s_j\le\lambda(S)$, so the immediate second call is a split case and cannot return to local replacement. This prevents circular use of the induction hypothesis.

\begin{lemma}\label{lem:macro-expansion}
Fix an integer $m\ge2$. Assume that $\mathsf P(j)$ holds for every integer $j$ with $2\le j<m$. Let $S$ have exactly $m$ leaves and let $|S|\ge2s>0$. Then there are a finite positive Borel measure $\eta_{S,s}$ on $X_{S,s}$ and finitely many triples $(u_\beta,S_\beta,t_\beta)$, with $u_\beta>0$, such that
\begin{equation}\label{eq:macro-cover}
 c_{\eta_{S,s}}(x)+\sum_{\beta:\,x\in S_\beta}u_\beta\ge1
 \qquad(x\in S),
\end{equation}
\begin{equation}\label{eq:macro-budget}
 \pi_*\eta_{S,s}+\sum_\beta u_\beta\Lam{S_\beta}{t_\beta}
 \le\Lam{S}{s},
\end{equation}
and every retained child satisfies
\begin{equation}\label{eq:macro-child}
 |L(S_\beta)|=m,
 \qquad 0<t_\beta\le s,
 \qquad |S_\beta|\le\frac{|S|}{2},
 \qquad |S_\beta|\ge2t_\beta.
\end{equation}
For $m=2$, the conclusion holds without the induction assumption.
\end{lemma}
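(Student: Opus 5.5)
We argue by cases on $\lambda=\lambda(S)$ compared with $2s$. We fix an optimal split $S=A\cup B$ as in \eqref{eq:optimal-split}, with $|A|=\lambda\ge b=|B|$.

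\emph{Split case, non-balanced} ($\lambda\ge2s$ and $\lambda>b$). We apply \cref{prop:three-piece} to get $T_1,T_2,T_3$. Each pairwise union has length at least $\lambda\ge2s$ and fewer than $m$ leaves. It has at least two leaves, being nontrivial, and so is not a point. Hence $\mathsf P(j)$ with $j<m$ gives covers $\xi_{ij}$ with $\pi_*\xi_{ij}\le\Lam{T_i\cup T_j}{s}$. \Cref{prop:half-sum} then produces $\eta_{S,s}$ with full coverage and budget $\Lam{S}{s}$, and there are no children. For $m=2$ this case cannot occur, since by \cref{lem:interval-split} an interval's optimal split is balanced.

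\emph{Split case, balanced} ($\lambda\ge2s$ and $\lambda=b=|S|/2$). Each side has length $|S|/2\ge2s$. By \cref{lem:leaf-monotonicity} each side has at most $m$ leaves. If a side has fewer than $m$ leaves, we resolve it by $\mathsf P(j)$ with weight $1$. If it has exactly $m$ leaves, we retain it as a child $(1,A,s)$ or $(1,B,s)$. Coverage follows because every point lies in $A$ or $B$. The budget follows from \eqref{eq:balanced-budget}. The child conditions \eqref{eq:macro-child} hold: $t=s$, the length is $|S|/2$, and the length is at least $2s$. For $m=2$ both sides are intervals with two leaves, so both are retained and no induction hypothesis is used.

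\emph{Local case} ($\lambda<2s$). \Cref{prop:local-replacement} gives a certificate with $\eta$ on $X_{S,s}$ and terms $(w_j,s_j)$ with $0<s_j\le\lambda/2<s$. Each term $w_j\Lam{S}{s_j}$ must be expanded once more at scale $s_j$, because a child of the form $(w_j,S,s_j)$ would violate $|S_\beta|\le|S|/2$. Since $2s_j\le\lambda(S)$, the problem $(S,s_j)$ falls into a split case, and $|S|\ge2\lambda(S)\cdot\frac12\cdot2\ge2s_j$ holds because $\lambda\le|S|$. We apply the split-case construction at scale $s_j$. This gives a measure $\eta_j$ on $X_{S,s_j}\subseteq X_{S,s}$ and children of half length with $t=s_j\le s$ and lengths at least $2s_j$. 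We then combine as in \cref{lem:composition}:
\[
\eta_{S,s}=\eta+\sum_j w_j\eta_j,
\]
and the children of the $j$th expansion are scaled by $w_j$. Coverage \eqref{eq:macro-cover} follows because $c_\eta+\sum_j w_j\ge1$ and each split expansion covers $S$ with weight $1$. The budget \eqref{eq:macro-budget} follows from \eqref{eq:replacement-budget} after substituting each split budget, which is at most $\Lam{S}{s_j}$. For $m=2$ the inner split is balanced with interval sides, so again no induction is needed.

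The main obstacle is the bookkeeping in the local case. We must check that every piece of \eqref{eq:macro-child} survives the two-level composition, in particular $t_\beta\le s$ and $|S_\beta|\ge2t_\beta$ for the inner balanced split. We must also verify that the measures live on $X_{S,s}$. This uses the embeddings of subtree measures and of smaller-scale measures noted before \cref{prop:half-sum} and before \cref{lem:exact-replacement}, which leave coverage unchanged and do not alter radius measures.
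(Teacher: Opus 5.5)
Your proposal is correct and follows the paper's proof essentially step for step. It uses the same three cases: the non-balanced split via \cref{prop:three-piece} and \cref{prop:half-sum}, the balanced split with retention of the $m$-leaf sides, and local replacement followed by one forced split-case expansion at each scale $s_j$, composed as in \cref{lem:composition}, with the same $m=2$ bookkeeping. One small slip: your chain ``$|S|\ge2\lambda(S)\cdot\frac12\cdot2\ge2s_j$'' asserts $|S|\ge2\lambda$, which fails for non-balanced trees; the justification you intended, and which suffices, is $|S|\ge\lambda\ge2s_j$.
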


\begin{proof}
Let $\lambda=\lambda(S)$.

\medskip
\noindent\textit{Case 1. $\lambda\ge2s$ and $\lambda>|S|/2$.}
By \cref{prop:three-piece}, write $S=S_1\cup S_2\cup S_3$ so that each pairwise union has length at least $\lambda\ge2s$ and fewer than $m$ leaves. Each pairwise union is nontrivial and has between $2$ and $m-1$ leaves, so the strong induction hypothesis gives a fractional cover of it with its target radius measure. By \cref{prop:half-sum}, their half-sum is a fractional cover $\eta_{S,s}$ of $S$ with $\pi_*\eta_{S,s}\le\Lam{S}{s}$. No child is retained. This case cannot occur for $m=2$ by \cref{lem:interval-split}.

\medskip
\noindent\textit{Case 2. $\lambda\ge2s$ and $\lambda=|S|/2$.}
Let $S=A\cup B$ be an optimal split. Then $|A|=|B|=|S|/2\ge2s$. By \cref{lem:leaf-monotonicity}, each side has at most $m$ leaves. If a side $R\in\{A,B\}$ has fewer than $m$ leaves, then it has at least two leaves and we let $\zeta_R$ be the fractional cover given by the strong induction hypothesis. If it has $m$ leaves, then we retain the triple $(1,R,s)$. Let $\eta_{S,s}$ be the sum of the measures $\zeta_R$ over the resolved sides.

Each point of $A$ is either covered by $\zeta_A$ or counted by the retained copy of $A$, and the same holds for $B$. Thus \eqref{eq:macro-cover} holds. The budget follows from \eqref{eq:balanced-budget}. Every retained side has length $|S|/2$ and still satisfies $|R|\ge2s$. When $m=2$, \cref{lem:interval-split} shows that both sides are intervals, so no lower-leaf result is used.

\medskip
\noindent\textit{Case 3. $\lambda<2s$.}
Apply \cref{prop:local-replacement}. We obtain a certificate $(\eta_0;\{(q_j,s_j)\}_{j=1}^q)$ for $(S,s)$ with $2s_j\le\lambda$ for every remaining term. Because $2s_j\le\lambda(S)$, the same tree at scale $s_j$ satisfies Case~1 or Case~2, so this second call does not return to Case~3. Apply that case and denote the resolved measure by $\eta_j$ and the retained triples by $(u_{jh},S_{jh},t_{jh})$.

Set $\eta_{S,s}=\eta_0+\sum_{j=1}^q q_j\eta_j$, and retain $(q_j u_{jh},S_{jh},t_{jh})$. Each certificate and each split has finitely many children, so the retained family is finite. For $x\in S$,
\begin{align*}
 c_{\eta_{S,s}}(x)+\sum_{j,h:\,x\in S_{jh}}q_j u_{jh}
 &=c_{\eta_0}(x)+\sum_{j=1}^q q_j
 \left(c_{\eta_j}(x)+\sum_{h:\,x\in S_{jh}}u_{jh}\right)\\
 &\ge c_{\eta_0}(x)+\sum_{j=1}^q q_j\ge1.
\end{align*}
Also,
\begin{align*}
 \pi_*\eta_{S,s}+\sum_{j,h}q_j u_{jh}\Lam{S_{jh}}{t_{jh}}
 &\le\pi_*\eta_0+\sum_{j=1}^q q_j\Lam{S}{s_j}\\
 &\le\Lam{S}{s}.
\end{align*}
Every retained child comes from a balanced split at target scale $s$ in Case~2, or at a smaller replacement scale $s_j<s$ in Case~3. Hence $0<t_\beta\le s$, and all the assertions in \eqref{eq:macro-child} hold.
\end{proof}

Norin and Turcotte~\cite[Section~6.1]{NorinTurcotte} reported the corresponding exact conclusion for metric trees with at most three leaves. We now prove the fractional statement for arbitrary finite metric trees.

\begin{proposition}\label{prop:fractional-main}
Let $T$ be a finite metric tree of length $L\ge2r>0$. Then there is a fractional marked-ball cover $\xi$ on $X_{T,r}$ such that $\pi_*\xi\le\Lam{T}{r}$.
\end{proposition}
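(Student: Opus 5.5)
We prove the statement by strong induction on the number of leaves $m=|L(T)|$; for each $m$ we iterate \cref{lem:macro-expansion} and pass to a limit. The one-point tree is excluded, since $L\ge2r>0$. Hence $m\ge2$ and $T$ is nontrivial. Assume $\mathsf P(j)$ for all $2\le j<m$; for $m=2$ nothing is assumed, as \cref{lem:macro-expansion} allows. Fix $S=T$ and $s=r$ and apply \cref{lem:macro-expansion}. This gives a resolved measure $\eta^{(1)}=\eta_{T,r}$ on $X_{T,r}$ and finitely many retained triples $(u_\beta,S_\beta,t_\beta)$. Each child satisfies \eqref{eq:macro-child}, so $|L(S_\beta)|=m$ and $|S_\beta|\ge2t_\beta$, and the lemma applies to every child again.

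At depth $n$ we will have a resolved measure $\eta^{(n)}$ on $X_{T,r}$ and a finite family of retained triples $(u^{(n)}_\gamma,S^{(n)}_\gamma,t^{(n)}_\gamma)$. These satisfy the analogues of \eqref{eq:macro-cover} and \eqref{eq:macro-budget} relative to $(T,r)$, together with $|S^{(n)}_\gamma|\le 2^{-n}L$ and $t^{(n)}_\gamma\le r$. We pass from depth $n$ to depth $n+1$ by applying \cref{lem:macro-expansion} to every retained child and composing as in \cref{lem:composition}. In the composition, each child's resolved measure $\eta_{S_\gamma,t_\gamma}$ is pushed into $X_{T,r}$ through the inclusions $S_\gamma\subseteq T$ and $[0,t_\gamma]\subseteq[0,r]$. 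As noted before \cref{prop:half-sum}, this does not decrease coverage and does not change the radius measure.

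The coverage inequality is preserved because every point of $S_\gamma$ receives from the child's expansion at least the weight $u_\gamma$ that it previously received from $S_\gamma$. The budget is preserved because each child's contribution $u_\gamma\Lam{S_\gamma}{t_\gamma}$ is replaced by something of at most the same size. Hence $\eta^{(n)}$ increases with $n$, and $\pi_*\eta^{(n)}\le\Lam{T}{r}$ for every $n$, so $\eta^{(n)}(X_{T,r})\le L/r$ for all $n$. Let $\eta^{(\infty)}=\lim_n\eta^{(n)}$ be the monotone setwise limit. It is a finite Borel measure with $\pi_*\eta^{(\infty)}\le\Lam{T}{r}$.

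The main obstacle is coverage in the limit, which requires the total unresolved weight to vanish. The budget inequality gives
\[
\sum_\gamma u^{(n)}_\gamma\frac{|S^{(n)}_\gamma|}{t^{(n)}_\gamma}\le\frac{L}{r}.
\]
This controls $\sum_\gamma u_\gamma|S_\gamma|/t_\gamma$, but not $\sum_\gamma u_\gamma$ directly, because the ratio $|S_\gamma|/t_\gamma\ge2$ can stay bounded while the lengths shrink. So we use length instead. The first-moment part of the budget (apply $\wgt$ to both sides) gives $\sum_\gamma u^{(n)}_\gamma|S^{(n)}_\gamma|\le L$, and this alone is not enough.

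The approach we will try first is to bound $\sum_\gamma u^{(n)}_\gamma$ itself. In Case~2 of \cref{lem:macro-expansion} the weights are unchanged, with at most two children per parent. In Case~3 the weights of the retained terms sum to $\sum_j q_j\le1$, by the same normalization $p_0+\sum_ip_i=1$ as in \cref{lem:exact-replacement}. So the weight of a single lineage does not grow. A retained child $S_\gamma$ of length $\le2^{-n}L$ at scale $t_\gamma\le|S_\gamma|/2$ can therefore be thought of as hitting only a region of $T$ of length $\le2^{-n}L$.

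For fixed $x\in T$, the quantity that matters is the unresolved weight $w_n(x)=\sum_{\gamma:\,x\in S^{(n)}_\gamma}u^{(n)}_\gamma$. We aim to show that $w_n(x)$ is nonincreasing in $n$ and tends to $0$. The tool for this is that every Case~2 step either hands a side to the induction hypothesis $\mathsf P(j)$, $j<m$, or keeps it at half the length. A tree with $m$ leaves and length $\ell$ must have leaf spacing at most comparable to $\ell$, so after $O(\log)$ halvings a balanced split must eventually drop a leaf. We would formalize this by showing that along any lineage a fixed fraction of weight is resolved within a bounded number of macro-steps; compare \cref{rem:finite-depth}. Once $w_n(x)\to0$ pointwise, coverage follows: \eqref{eq:macro-cover} gives $c_{\eta^{(n)}}(x)\ge1-w_n(x)$, and monotone convergence gives $c_{\eta^{(\infty)}}(x)\ge1$. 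Then $\xi=\eta^{(\infty)}$ is the required fractional marked-ball cover, which establishes $\mathsf P(m)$ and completes the induction.
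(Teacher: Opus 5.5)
Your framework matches the paper's: strong induction on the number of leaves, iterating \cref{lem:macro-expansion} under the coverage and budget invariants, and a monotone setwise limit. The gap is the step you yourself identify as the main obstacle. You never prove that the unresolved weight tends to zero, and the route you sketch for it fails. The heuristic that a lineage with $m$ leaves must lose a leaf after $O(\log)$ balanced halvings is false. Take the star with arms of lengths $1,2,4$. Its unique optimal split cuts the long arm at distance $1/2$ from the center, and the side containing the center is the star with arms $\tfrac12,1,2$, a half-scale copy. So Case~2 retains a three-leaf child of weight $1$ at every depth. Any pointwise decay would then have to come from Case~3 alone, and you give no uniform lower bound on the resolved fraction $p_0=L/D$ there. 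Your claim that $w_n(x)$ is nonincreasing also fails at a cut point when both sides of a balanced split are retained; for example, the midpoint of an interval can receive weight $2$.

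The missing idea is to use the budget invariant as an inequality of measures localized near radius zero, not just through its total mass or first moment. You already have $|S^{(n)}_\gamma|\le2^{-n}L$ and $|S^{(n)}_\gamma|\ge2t^{(n)}_\gamma$, so every retained scale satisfies $t^{(n)}_\gamma\le\rho_n:=L/2^{n+1}$. Hence $\Gamma_n=\sum_\gamma u^{(n)}_\gamma\Lam{S^{(n)}_\gamma}{t^{(n)}_\gamma}$ is supported on $[0,\rho_n]$. The budget invariant gives $\Gamma_n\le\Lam{T}{r}$, because $\pi_*\eta^{(n)}\ge0$. Each term has total mass $|S^{(n)}_\gamma|/t^{(n)}_\gamma\ge2$, so
\[
 2\sum_\gamma u^{(n)}_\gamma\le\Gamma_n([0,\rho_n])\le\Lam{T}{r}([0,\rho_n])=\frac{L}{r^2}\min\{\rho_n,r\}\longrightarrow0 .
\]
Thus $w_n(x)\le\sum_\gamma u^{(n)}_\gamma\to0$ uniformly in $x$, with no leaf-dropping or per-lineage argument needed. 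Your final monotone-convergence step then gives $c_{\eta^{(\infty)}}\ge1$. This is how the paper closes the proof: retained subproblems at tiny scales can only draw on the part of the uniform target near radius zero, and that mass vanishes.
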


\begin{proof}
We extend the low-leaf range reported by Norin and Turcotte~\cite[Section~6.1]{NorinTurcotte} by proving $\mathsf P(m)$ for every $m\ge2$ through strong induction. Fix $m=|L(T)|$. The induction hypothesis is that $\mathsf P(j)$ holds for every $2\le j<m$; it is vacuous when $m=2$. Therefore \cref{lem:macro-expansion} applies to every subproblem with $m$ leaves. Notice that, when $m=2$, \cref{lem:interval-split} rules out the non-balanced three-piece case, so no lower-leaf assertion is used.

The recursive construction maintains four facts: the resolved measure and the unresolved subproblems jointly cover every point; their radius budgets sum to at most the original target; every unresolved subtree has exactly $m$ leaves; and each unresolved child has at most half the length of its parent.

At stage $n$, let $\xi_n$ be a finite positive Borel measure on $X_{T,r}$, and let $\mathcal U_n=\{(w_\alpha,T_\alpha,r_\alpha)\}_\alpha$ be a finite family of remaining subproblems. Here $w_\alpha>0$, the subtree $T_\alpha$ has $m$ leaves, and
$0<r_\alpha\le r$ with $|T_\alpha|\ge2r_\alpha$. We keep the two inequalities
\begin{equation}\label{eq:coverage-invariant}
 c_{\xi_n}(x)+\sum_{\alpha:\,x\in T_\alpha}w_\alpha\ge1
 \qquad(x\in T)
\end{equation}
and
\begin{equation}\label{eq:budget-invariant}
 \pi_*\xi_n+\sum_\alpha w_\alpha\Lam{T_\alpha}{r_\alpha}
 \le\Lam{T}{r}.
\end{equation}
Start with $\xi_0=0$ and $\mathcal U_0=\{(1,T,r)\}$.

Suppose that stage $n$ is defined. Apply \cref{lem:macro-expansion} to each $(T_\alpha,r_\alpha)$. Let its output be $\eta_\alpha$ and $(u_{\alpha\beta},S_{\alpha\beta},t_{\alpha\beta})$. View every measure on $T_\alpha$ as a measure on $T$. Since $\mathcal U_n$ is finite and every macro-expansion has finitely many children, $\mathcal U_{n+1}$ is finite. Set $\xi_{n+1}=\xi_n+\sum_\alpha w_\alpha\eta_\alpha$ and
\[
 \mathcal U_{n+1}
 =\{(w_\alpha u_{\alpha\beta},S_{\alpha\beta},t_{\alpha\beta}):\alpha,\beta\}.
\]
For $x\in T$, by \eqref{eq:macro-cover}
\begin{align*}
 c_{\xi_{n+1}}(x)
 &+\sum_{\alpha,\beta:\,x\in S_{\alpha\beta}}
 w_\alpha u_{\alpha\beta}\\
 &\ge c_{\xi_n}(x)+\sum_{\alpha:\,x\in T_\alpha}w_\alpha
 \left(c_{\eta_\alpha}(x)
 +\sum_{\beta:\,x\in S_{\alpha\beta}}u_{\alpha\beta}\right)\\
 &\ge c_{\xi_n}(x)+\sum_{\alpha:\,x\in T_\alpha}w_\alpha\ge1.
\end{align*}
Thus \eqref{eq:coverage-invariant} holds at stage $n+1$. By \eqref{eq:macro-budget},
\begin{align*}
 \pi_*\xi_{n+1}
 &+\sum_{\alpha,\beta}w_\alpha u_{\alpha\beta}
 \Lam{S_{\alpha\beta}}{t_{\alpha\beta}}\\
 &=\pi_*\xi_n+\sum_\alpha w_\alpha
 \left(\pi_*\eta_\alpha
 +\sum_\beta u_{\alpha\beta}\Lam{S_{\alpha\beta}}{t_{\alpha\beta}}\right)\\
 &\le\pi_*\xi_n+\sum_\alpha w_\alpha\Lam{T_\alpha}{r_\alpha}
 \le\Lam{T}{r}.
\end{align*}
Hence \eqref{eq:budget-invariant} also holds, and $\xi_n\le\xi_{n+1}$.

Every retained child has at most half the length of its parent. Therefore each $(w_\alpha,T_\alpha,r_\alpha)\in\mathcal U_n$ satisfies $|T_\alpha|\le L/2^n$ and $r_\alpha\le\rho_n:=L/2^{n+1}$. Put $\Gamma_n=\sum_\alpha w_\alpha\Lam{T_\alpha}{r_\alpha}$ and $W_n=\sum_\alpha w_\alpha$. The measure $\Gamma_n$ is supported on $[0,\rho_n]$, and \eqref{eq:budget-invariant} gives $\Gamma_n\le\Lam{T}{r}$. Hence $\Gamma_n(\R_{\ge0})\le\Lam{T}{r}([0,\rho_n])=(L/r^2)\min\{\rho_n,r\}$. For each remaining term, $\Lam{T_\alpha}{r_\alpha}(\R_{\ge0})=|T_\alpha|/r_\alpha\ge2$. It follows that
\begin{equation}\label{eq:weight-decay}
 2W_n\le\Gamma_n(\R_{\ge0})
 \le\frac{L}{r^2}\min\{\rho_n,r\}\longrightarrow0.
\end{equation}

The measures $\xi_n$ increase, and \eqref{eq:budget-invariant} gives $\xi_n(X_{T,r})\le L/r$. Let $\Delta_0=\xi_0$ and $\Delta_n=\xi_n-\xi_{n-1}$ for $n\ge1$. Define the finite Borel measure $\xi=\sum_{n=0}^\infty\Delta_n$. Then $\xi(A)=\lim_n\xi_n(A)$ for every Borel set $A$. For each Borel set $B\subseteq[0,r]$, we have $(\pi_*\xi)(B)=\lim_{n\to\infty}(\pi_*\xi_n)(B)\le\Lam{T}{r}(B)$. Thus $\pi_*\xi\le\Lam{T}{r}$.

Finally, \eqref{eq:coverage-invariant} gives $c_{\xi_n}(x)\ge1-W_n$ for every $x\in T$. Since $\xi_n\uparrow\xi$, monotone convergence and \eqref{eq:weight-decay} yield $c_\xi(x)=\lim_n c_{\xi_n}(x)\ge1$. Hence $\xi$ is a fractional marked-ball cover.
\end{proof}

\begin{remark}\label{rem:finite-depth}
The construction in the proof of \cref{prop:fractional-main} is quantitative. At depth $n$, the unresolved weight satisfies $W_n\le(L/(2r^2))\min\{r,L/2^{n+1}\}$. The resolved measure $\xi_n$ obeys
$\pi_*\xi_n\le\Lam{T}{r}$ and
$c_{\xi_n}(x)\ge1-W_n$ for every $x\in T$. Consequently, whenever $W_n<1$, the measure $(1-W_n)^{-1}\xi_n$ is a fractional marked-ball cover whose radius measure is at most $(1-W_n)^{-1}\Lam{T}{r}$. By \cref{cor:budgeted-rounding}, it yields a random $0$-good cover with the same bound. In particular, if $W_n\le\varepsilon/(1+\varepsilon)$, then the expected radius measure is at most $(1+\varepsilon)\Lam{T}{r}$. Thus the exact limiting argument contains an explicit geometrically convergent finite-depth approximation.
\end{remark}

\begin{proof}[Proof of \cref{thm:main}]
By \cref{prop:fractional-main}, there is a fractional marked-ball cover $\xi$ on $X_{T,r}$ with $\pi_*\xi\le\Lam{T}{r}$. By \cref{cor:budgeted-rounding}, there is a probability measure $\nu$ on $\C(T,0)$ such that $E_\nu\le\pi_*\xi\le\Lam{T}{r}=(L/r)U[0,r]$. This proves the theorem.
\end{proof}

\section{Budgeted random--fractional duality}\label{sec:duality}

Fractional matching and covering duality for infinite hypergraphs is studied by Aharoni and Holzman~\cite{AharoniHolzman}, while Rademacher, Toriello, and Vielma~\cite{RademacherTorielloVielma} treat infinite-dimensional packing and covering polyhedra. We prove the compact radius-budget form required for metric-tree balls.

The rounding theorem has a converse: a random cover with finite expected radius measure can be lifted to a fractional marked-ball cover with the same radius measure.

\begin{lemma}\label{lem:random-lifting}
Let $T$ be a finite metric tree, let $r>0$, and let $\nu$ be a probability measure on $\C(T)$ such that almost every sampled radius lies in $[0,r]$ and $E_\nu([0,r])<\infty$. Then there is a fractional marked-ball cover $\xi$ on $X_{T,r}$ such that $\pi_*\xi=E_\nu$.
\end{lemma}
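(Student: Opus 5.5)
The plan is to attach centers to the sampled radii in a Borel way and then take the intensity of the resulting random marked configuration. The Borel center selection $\mathbf v_m:K_m\to T^m$ from \eqref{eq:borel-lifting} does exactly this. Since almost every sampled radius lies in $[0,r]$, the set $\bigsqcup_{m\ge0}K_m$ of covers whose radii all lie in $[0,r]$ has full $\nu$-measure. On this set I define the map
\[
 \Phi(\mathbf s)=\sum_{i=1}^m\delta_{(v_{m,i}(\mathbf s),\,s_i)}\in\Nf(X_{T,r}),
 \qquad \mathbf s\in K_m,
\]
and on the null complement I set $\Phi=0$. Then $\Phi$ is Borel on each layer, because $\mathbf s\mapsto(\mathbf v_m(\mathbf s),\mathbf s)$ is Borel into $X_{T,r}^m$ and the map $(b_1,\dots,b_m)\mapsto\sum_i\delta_{b_i}$ is continuous into $\Nf(X_{T,r})$. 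For $\mathbf s\in K_m$ with $m\ge1$, the selected centers realize the cover, so $\Phi(\mathbf s)$ is a finite marked cover. The layer $m=0$ is empty because $\C_0(T)=\varnothing$.

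Next I set $\mathsf P=\Phi_*\nu$ and $\xi=I_{\mathsf P}$, that is, $\xi(A)=\int\Phi(\mathbf s)(A)\,d\nu(\mathbf s)$. The total mass is
\[
 \xi(X_{T,r})=\int m\,d\nu=E_\nu([0,r])<\infty,
\]
so $\xi$ is a finite positive Borel measure; countable additivity follows from monotone convergence. For the radius measure, $\Phi(\mathbf s)(T\times A)=|\{i:s_i\in A\}|$ for $\nu$-a.e.\ $\mathbf s$. Integrating gives $\pi_*\xi(A)=E_\nu(A)$ for Borel $A\subseteq[0,r]$. Since $E_\nu$ puts no mass outside $[0,r]$, this yields $\pi_*\xi=E_\nu$.

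For coverage I use \eqref{eq:campbell-finite} with $f=\one_{\{x\in B(\cdot)\}}$, which is the indicator of a closed set and hence Borel. This gives
\[
 c_\xi(x)=\int c_{\Phi(\mathbf s)}(x)\,d\nu(\mathbf s)\ge\int1\,d\nu=1,
\]
because almost every $\Phi(\mathbf s)$ is a finite marked cover and so $c_{\Phi(\mathbf s)}(x)\ge1$. Hence $\xi$ is a fractional marked-ball cover.

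The only delicate point is measurability: $\Phi$ must be Borel, and the evaluation maps $\eta\mapsto\eta(A)$ must be Borel so that the intensity is well defined. Both are supplied by the Arsenin--Kunugui selection \eqref{eq:borel-lifting} and by the evaluation-map discussion before \cref{lem:point-measure-compactness}. Everything else is Tonelli.
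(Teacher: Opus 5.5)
Your proof is correct and follows the paper's argument: you select centers layerwise with the Borel lifting \eqref{eq:borel-lifting}, push $\nu$ forward to a random finite marked cover $\mathsf P$, and set $\xi=I_{\mathsf P}$, and you obtain both $c_\xi\ge1$ and $\pi_*\xi=E_\nu$ by Tonelli. Your explicit choice $\Phi=0$ on the null set, together with the ``almost every realization'' wording in the coverage step, is slightly more careful than the paper's ``every realization covers $T$'' after an arbitrary extension.
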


\begin{proof}
On every layer $\C_m(T)\cap[0,r]^m$, use the Borel center selection in \eqref{eq:borel-lifting}. These layerwise selections define a Borel map from the full-measure set of bounded-radius covers to $\Nf(X_{T,r})$. We extend it arbitrarily to the null complement. Its push-forward is a random finite marked cover $\mathsf P$. The assumption $E_\nu([0,r])<\infty$ says that $\mathsf P$ has finite mean mass. Put $\xi=I_{\mathsf P}$. Every realization covers $T$, so $c_\xi(x)\ge1$ for every $x\in T$. For every Borel set $A\subseteq[0,r]$, by Tonelli's theorem, we have
\[
 (\pi_*\xi)(A)
 =I_{\mathsf P}(T\times A)
 =\int\bigl|\{i:r_i\in A\}\bigr|\,d\nu
 =E_\nu(A).
\]
\end{proof}

\begin{definition}
Let $T$ be a finite metric tree, let $R>0$, and let $\beta$ be a finite positive Borel measure on $[0,R]$. We call $\beta$ a \emph{radius budget}, put $X=X_{T,R}$, and write $$\mathcal K_\beta=\{\xi:\xi\text{ is a finite positive Borel measure on }X\text{ and }\pi_*\xi\le\beta\}.$$ For $x\in T$, let $F_x=\{(v,s)\in X:d_T(x,v)\le s\}$ be its closed coverage section. A finite positive Borel measure on $T$ is \emph{atomic} if it has the form $\sum_{i=1}^n a_i\delta_{x_i}$ with $a_i\ge0$. For a finite positive Borel measure $\sigma$ on $T$, its \emph{ball-concentration function} is $M_\sigma(s)=\max_{v\in T}\sigma(B_T(v,s))$ for $s\ge0$.
\end{definition}

For fixed $s$, the map $v\mapsto\sigma(B_T(v,s))$ is upper semicontinuous: if $v_n\to v$, then $B_T(v_n,s)\subseteq B_T(v,s+\varepsilon)$ for all large $n$, and continuity from above as $\varepsilon\downarrow0$ gives the claim. Hence the maximum is attained. The function $M_\sigma$ is nondecreasing and therefore Borel measurable.

\begin{lemma}\label{lem:budget-set-compact}
For every finite metric tree $T$, every $R>0$, and every radius budget $\beta$ on $[0,R]$, the set $\mathcal K_\beta$ is compact and convex in the weak topology of finite positive Borel measures on $X_{T,R}$.
\end{lemma}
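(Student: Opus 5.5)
Convexity is immediate from the linearity of push-forward. If $\xi_1,\xi_2\in\mathcal K_\beta$ and $t\in[0,1]$, then $\pi_*(t\xi_1+(1-t)\xi_2)=t\pi_*\xi_1+(1-t)\pi_*\xi_2\le\beta$. The combination is again a finite positive Borel measure, so it lies in $\mathcal K_\beta$.

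For compactness we work in the space $\mathcal M_+(X)$ of finite positive Borel measures on the compact metric space $X=X_{T,R}=T\times[0,R]$, with the weak topology. This topology is metrizable on sets of bounded mass, so sequential compactness suffices. We aim to show that $\mathcal K_\beta$ is uniformly bounded in mass and weakly closed.

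\emph{Mass bound.} Every $\xi\in\mathcal K_\beta$ satisfies $\xi(X)=(\pi_*\xi)([0,R])\le\beta([0,R])=:M<\infty$. On a compact metric space, the set $\{\xi\in\mathcal M_+(X):\xi(X)\le M\}$ is weakly sequentially compact. Tightness is automatic because $X$ is compact, so Prokhorov's theorem applies, in the form of Billingsley~\cite[Theorem~5.1]{Billingsley} after normalising the masses; alternatively one may use Banach--Alaoglu in $C(X)^*$ together with the Riesz--Markov theorem. Hence every sequence $(\xi_n)$ in $\mathcal K_\beta$ has a subsequence converging weakly to some finite positive $\xi$ with $\xi(X)\le M$.

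\emph{Closedness.} This is the step that needs care, because the constraint $\pi_*\xi\le\beta$ is a setwise inequality, and setwise inequalities are not in general preserved under weak limits. We handle it through \eqref{eq:measure-order-standard} on the compact space $[0,R]$. Suppose $\xi_n\to\xi$ weakly and let $f\in C([0,R])$ satisfy $f\ge0$. Then $f\circ\pi\in C(X)$, so
\[
\int f\,d\pi_*\xi=\int f\circ\pi\,d\xi=\lim_n\int f\circ\pi\,d\xi_n=\lim_n\int f\,d\pi_*\xi_n\le\int f\,d\beta .
\]
By \eqref{eq:measure-order-standard}, with $X$ replaced by $[0,R]$, this gives $\pi_*\xi\le\beta$, and therefore $\xi\in\mathcal K_\beta$.

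Combining the two steps, every sequence in $\mathcal K_\beta$ has a subsequence with limit in $\mathcal K_\beta$, so $\mathcal K_\beta$ is compact.
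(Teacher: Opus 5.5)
Your proposal is correct and follows the paper's proof: the uniform mass bound $\xi(X)\le\beta([0,R])$ gives weak relative compactness, and closedness comes from testing nonnegative $f\in C([0,R])$ through $f\circ\pi$ and applying \eqref{eq:measure-order-standard}. The only difference is that you spell out the metrizability and Prokhorov/Banach--Alaoglu details that the paper leaves implicit.
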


\begin{proof}
Every $\xi\in\mathcal K_\beta$ has mass at most $\beta([0,R])$. Since $X_{T,R}$ is compact, this uniform mass bound gives weak relative compactness. To see that the set is closed, let $\xi_n\to\xi$ weakly with $\pi_*\xi_n\le\beta$. For every nonnegative $f\in C([0,R])$, we have $\int f\,d(\pi_*\xi)=\lim_{n\to\infty}\int f\,d(\pi_*\xi_n)\le\int f\,d\beta$. By \eqref{eq:measure-order-standard}, $\pi_*\xi\le\beta$. Convexity is immediate.
\end{proof}

\begin{lemma}\label{lem:atomic-support-function}
Let $T$ be a finite metric tree, let $R>0$, let $\beta$ be a radius budget on $[0,R]$, and let $\sigma=\sum_{i=1}^n a_i\delta_{x_i}$ be a finite positive atomic measure on $T$. Put $h_\sigma(v,s)=\sigma(B_T(v,s))$. Then
\begin{equation}\label{eq:support-function}
 \sup_{\xi\in\mathcal K_\beta}\int_{X_{T,R}}h_\sigma\,d\xi
 =\int_{[0,R]}M_\sigma(s)\,d\beta(s).
\end{equation}
\end{lemma}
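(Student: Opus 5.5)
We prove the two inequalities in \eqref{eq:support-function} separately.

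For the upper bound, fix $\xi\in\mathcal K_\beta$. Pointwise, $h_\sigma(v,s)=\sigma(B_T(v,s))\le M_\sigma(s)=(M_\sigma\circ\pi)(v,s)$. The function $h_\sigma$ is a finite nonnegative combination of indicators of the closed sets $F_{x_i}$, so it is Borel and bounded by $\sigma(T)$. Integrating the pointwise bound and using the change of variables for $\pi_*\xi$ gives
\[
\int h_\sigma\,d\xi\le\int M_\sigma\,d(\pi_*\xi).
\]
Because $M_\sigma$ is nonnegative, bounded and Borel (it is nondecreasing), the order $\pi_*\xi\le\beta$ extends from sets to such functions by approximation with simple functions. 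This yields $\int M_\sigma\,d(\pi_*\xi)\le\int M_\sigma\,d\beta$.

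For the lower bound, we build a maximizer explicitly by choosing a center measurably in $s$. Since $\sigma$ is atomic with finitely many atoms, for each $s$ the maximum in $M_\sigma(s)$ is attained. We then need a Borel map $v^*:[0,R]\to T$ with $\sigma(B_T(v^*(s),s))=M_\sigma(s)$. Two routes are available.
\begin{enumerate}
\item The set $\{(s,v):\sigma(B_T(v,s))\ge M_\sigma(s)\}$ is Borel. Indeed, $(s,v)\mapsto h_\sigma(v,s)$ is upper semicontinuous jointly, being a sum of indicators of closed sets, and $M_\sigma$ is Borel. Its sections are compact and nonempty: a section is the superlevel set of an upper semicontinuous function on compact $T$, hence closed. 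Arsenin--Kunugui selection \cite[Theorem~18.18]{Kechris}, as in \eqref{eq:borel-lifting}, then gives $v^*$.
\item A more elementary choice avoids selection theorems. Only finitely many subsets $I\subseteq[n]$ occur as $\{i:x_i\in B_T(v,s)\}$. For each $I$, the set of $s$ for which some ball of radius $s$ contains $\{x_i:i\in I\}$ is $[\operatorname{diam}\{x_i:i\in I\}/2,\infty)$ by \cref{lem:tree-radius}, applied to the subtree spanned by these points. The corresponding center, the midpoint of a diametral pair, can be fixed independently of $s$. So $M_\sigma(s)=\max\{\sum_{i\in I}a_i:\operatorname{diam}(x_I)\le2s\}$ is a step function. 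On each of its finitely many intervals of constancy we fix one optimal subset $I$ and the center of $x_I$, which makes $v^*$ piecewise constant.
\end{enumerate}
I would use the elementary route. With $v^*$ in hand, set $\xi^*=(v^*,\mathrm{id})_*\beta$. Then $\pi_*\xi^*=\beta$, so $\xi^*\in\mathcal K_\beta$, and $\int h_\sigma\,d\xi^*=\int M_\sigma\,d\beta$. Hence the supremum is attained.

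The main obstacle is the measurable choice of optimal centers, since only there is care needed; the upper bound is routine. The elementary route needs one check: the minimal radius enclosing a finite set $x_I$ in the tree is $\operatorname{diam}(x_I)/2$. This follows from \cref{lem:tree-radius} applied to the convex hull of $x_I$. The hull is a finite metric subtree whose diameter is attained at points of $x_I$, and by convexity $B_S(v,s)=S\cap B_T(v,s)$, so a ball of the subtree is a ball of $T$ intersected with the hull. The degenerate case $|I|=1$ is a radius-zero ball at the point itself. Once this check is in place, the measurability of $v^*$ is immediate.
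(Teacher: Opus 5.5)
Your proposal is correct, and your elementary route is essentially the paper's proof. The paper likewise writes $M_\sigma(s)=\max\{q_J:\rho_J\le s\}$ over the finitely many subsets $J\subseteq[n]$, selects a maximizing $J(s)$ with a fixed center $c_J$ (piecewise constant, with breakpoints among the $\rho_J$), and pushes $\beta$ forward under $s\mapsto(c(s),s)$. It differs from your version in two small ways. First, it defines $\rho_J=\min_{v\in T}\max_{j\in J}d_T(v,x_j)$ directly, so it needs neither \cref{lem:tree-radius} nor the midpoint of a diametral pair. Second, it takes the first maximizer in a fixed ordering at each $s$, which settles a point your ``one optimal subset per interval of constancy'' leaves implicit: the chosen subset must already be feasible at the left endpoint of that interval.
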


\begin{proof}
The upper bound follows from $h_\sigma(v,s)\le M_\sigma(s)$ and $\pi_*\xi\le\beta$. If $\sigma=0$, equality is immediate. Assume henceforth that $\sigma(T)>0$. For the reverse inequality, for every $J\subseteq[n]$ put $q_J=\sum_{j\in J}a_j$ and $\rho_J=\min_{v\in T}\max_{j\in J}d_T(v,x_j)$, with $q_\varnothing=\rho_\varnothing=0$, and choose a center $c_J$ attaining $\rho_J$ for each nonempty $J$. A ball of radius $s$ determines a set $J$ with $\rho_J\le s$, while a center attaining $\rho_J$ gives a radius-$s$ ball containing all atoms indexed by $J$. Consequently, $M_\sigma(s)=\max\{q_J:\rho_J\le s\}$.
Choose the first maximizing set $J(s)$ in a fixed ordering of the finitely many subsets of $[n]$. This selector is Borel because it changes only at the finitely many values $\rho_J$. Put $c(s)=c_{J(s)}$. The ball $B_T(c(s),s)$ contains all atoms indexed by $J(s)$, so
$h_\sigma(c(s),s)\ge q_{J(s)}=M_\sigma(s)$; the reverse inequality is the definition of $M_\sigma(s)$. Thus equality holds. The push-forward of $\beta$ under $s\mapsto(c(s),s)$ belongs to $\mathcal K_\beta$ and attains the right side of \eqref{eq:support-function}.
\end{proof}

\begin{lemma}\label{lem:finite-demand-feasibility}
Let $T$ be a finite metric tree, let $R>0$, and let $\beta$ be a radius budget on $[0,R]$. Assume that every finite positive atomic measure $\sigma$ on $T$ satisfies
\begin{equation}\label{eq:budgeted-dual-concentration}
 \sigma(T)\le\int_{[0,R]}M_\sigma(s)\,d\beta(s).
\end{equation}
Then, for every finite set $D\subseteq T$, some $\xi\in\mathcal K_\beta$ satisfies $\xi(F_x)\ge1$ for every $x\in D$.
\end{lemma}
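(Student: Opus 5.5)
This is a minimax separation argument. Write $D=\{x_1,\ldots,x_n\}$ and let $\Delta_n=\{a\in\R_{\ge0}^n:\sum_ia_i=1\}$ be the simplex of demand weights. Define the bilinear payoff
\[
 \Phi(\xi,a)=\sum_{i=1}^n a_i\,\xi(F_{x_i})=\int_{X_{T,R}}h_{\sigma_a}\,d\xi,
 \qquad \sigma_a=\sum_{i=1}^n a_i\delta_{x_i}.
\]
The identity holds because $x_i\in B_T(v,s)$ exactly when $(v,s)\in F_{x_i}$. The goal is to show $\max_{\xi\in\mathcal K_\beta}\min_{a\in\Delta_n}\Phi(\xi,a)\ge1$. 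Indeed, $\min_a\Phi(\xi,a)=\min_i\xi(F_{x_i})$, so a maximizer $\xi$ satisfies $\xi(F_x)\ge1$ for every $x\in D$.

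The plan is to invoke a minimax theorem, using Sion's theorem or the Ky Fan form for a compact convex set against a simplex. By \cref{lem:budget-set-compact}, $\mathcal K_\beta$ is compact and convex in the weak topology, and $\Delta_n$ is compact and convex. $\Phi$ is affine in each variable. For fixed $a$, the map $\xi\mapsto\Phi(\xi,a)$ is upper semicontinuous in the weak topology. This holds because each $F_{x_i}$ is closed, so $\xi\mapsto\xi(F_{x_i})$ is upper semicontinuous by the portmanteau theorem, and a nonnegative combination of upper semicontinuous maps stays upper semicontinuous. For fixed $\xi$, $\Phi$ is linear, hence continuous, in $a$. Sion's theorem then gives
\[
 \sup_{\xi\in\mathcal K_\beta}\min_{a\in\Delta_n}\Phi(\xi,a)
 =\min_{a\in\Delta_n}\sup_{\xi\in\mathcal K_\beta}\Phi(\xi,a).
\]
The supremum on the left is attained. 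The reason is that $\xi\mapsto\min_a\Phi(\xi,a)=\min_i\xi(F_{x_i})$ is a minimum of finitely many upper semicontinuous functions, hence upper semicontinuous on the compact set $\mathcal K_\beta$.

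To evaluate the right side, apply \cref{lem:atomic-support-function} to the atomic measure $\sigma_a$. It gives $\sup_{\xi}\Phi(\xi,a)=\int_{[0,R]}M_{\sigma_a}(s)\,d\beta(s)$. The hypothesis \eqref{eq:budgeted-dual-concentration} then yields $\int M_{\sigma_a}\,d\beta\ge\sigma_a(T)=1$ for every $a\in\Delta_n$. So the right side is at least $1$, and hence the attained left side is at least $1$.

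The main obstacle is checking the hypotheses of the minimax theorem in the weak topology, in particular the semicontinuity direction. Sion requires upper semicontinuity in the maximizing variable over the compact set, and the portmanteau inequality $\limsup\xi_n(F)\le\xi(F)$ for closed $F$ gives exactly this. If one prefers to avoid Sion, a fallback is a finite-dimensional separating hyperplane. Suppose $\max_\xi\min_i\xi(F_{x_i})<1$. Then the convex set $\{(\xi(F_{x_i}))_i-w:\xi\in\mathcal K_\beta,\ w\ge0\}\subseteq\R^n$ is closed, by compactness and upper semicontinuity, and it misses $\mathbf 1$. A separating vector $a\ge0$, normalized into $\Delta_n$, then contradicts the hypothesis through \cref{lem:atomic-support-function}.
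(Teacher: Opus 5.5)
Your argument is correct. Its main line differs from the paper's mainly in which convexity tool does the work.

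The paper's route. The paper stays in $\R^n$. It forms the set $C_D$ of vectors $z\ge0$ that are dominated coordinatewise by some $(\xi(F_{x_i}))_i$ with $\xi\in\mathcal K_\beta$. It shows $C_D$ is convex, closed (by compactness of $\mathcal K_\beta$ and the portmanteau inequality), and downward closed in the orthant. It then strictly separates $\mathbf 1$ from $C_D$, replaces the normal by its positive part, and contradicts the hypothesis via \cref{lem:atomic-support-function}.

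Your route. You apply Sion's theorem on $\mathcal K_\beta\times\Delta_n$ and get attainment from upper semicontinuity of $\xi\mapsto\min_i\xi(F_{x_i})$. The substantive inputs are the same: compactness of $\mathcal K_\beta$, upper semicontinuity of $\xi\mapsto\xi(F_x)$, and the support-function identity. You do, however, import a heavier black box whose hypotheses need checking in a topological vector space. Concretely, $\mathcal K_\beta$ sits inside the signed measures with the weak-$*$ topology. Upper semicontinuity must also hold for nets, not just sequences. It does, because $\xi(F)=\inf_k\int f_k\,d\xi$ for continuous $f_k\downarrow\one_F$; alternatively, the weak topology is metrizable on the mass-bounded set $\mathcal K_\beta$.

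Your fallback. This is essentially the paper's proof. Subtracting the whole cone $\R^n_{\ge0}$ is a small improvement: finiteness of the supremum already forces the separating vector to be nonnegative, so the positive-part step is not needed.
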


\begin{proof}
Write $D=\{x_1,\ldots,x_n\}$ and define
\[
 C_D=\left\{z\in\R_{\ge0}^n:
 \text{there is }\xi\in\mathcal K_\beta\text{ with }
 z_i\le\xi(F_{x_i})\text{ for every }i\right\}.
\]
This set is convex, bounded, and coordinatewise downward closed. It is also closed. Indeed, if $z^{(h)}\to z$ and $z_i^{(h)}\le\xi_h(F_{x_i})$, take a weakly convergent subsequence in the compact set $\mathcal K_\beta$. Since each $F_{x_i}$ is closed, by the portmanteau theorem, we have $z_i\le\limsup_h\xi_h(F_{x_i})\le\xi(F_{x_i})$.

Suppose that $\mathbf1\notin C_D$. Strict separation gives $a\in\R^n$ such that
\begin{equation}\label{eq:strict-separation}
 \sum_{i=1}^na_i>
 \sup_{z\in C_D}\sum_{i=1}^na_i z_i.
\end{equation}
Because $C_D\subseteq\R_{\ge0}^n$ is coordinatewise downward closed, setting a coordinate of $z$ to zero keeps the vector in $C_D$. Replacing $a$ by its positive part therefore leaves the supremum on the right unchanged and cannot decrease the left side. We may assume $a_i\ge0$. For $\sigma=\sum_i a_i\delta_{x_i}$, nonnegativity of $a$ and the definition of $C_D$ give
\[
 \sup_{z\in C_D}\sum_i a_i z_i
 =\sup_{\xi\in\mathcal K_\beta}\sum_i a_i\xi(F_{x_i})
 =\sup_{\xi\in\mathcal K_\beta}\int h_\sigma\,d\xi.
\]
By \cref{lem:atomic-support-function}, the last expression is
$\int M_\sigma\,d\beta$. Thus \eqref{eq:strict-separation} contradicts \eqref{eq:budgeted-dual-concentration}. Hence $\mathbf1\in C_D$, which is the required finite-demand cover.
\end{proof}

\begin{proposition}\label{prop:fractional-duality}
Let $T$ be a nontrivial finite metric tree, let $R>0$, and let $\beta$ be a finite positive Borel measure on $[0,R]$. The following statements are equivalent.
\begin{enumerate}[label=\textup{(\roman*)}]
\item There is a fractional marked-ball cover $\xi$ on $X_{T,R}$ such that $\pi_*\xi\le\beta$.
\item Every finite positive Borel measure $\sigma$ on $T$ satisfies \eqref{eq:budgeted-dual-concentration}.
\item Every finite positive atomic measure $\sigma$ on $T$ satisfies \eqref{eq:budgeted-dual-concentration}.
\end{enumerate}
\end{proposition}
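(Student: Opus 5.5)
We will prove the cycle (i)$\Rightarrow$(ii)$\Rightarrow$(iii)$\Rightarrow$(i). The step (ii)$\Rightarrow$(iii) is trivial, since atomic measures are a special case.

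For (i)$\Rightarrow$(ii), fix a fractional cover $\xi$ with $\pi_*\xi\le\beta$ and a finite positive Borel measure $\sigma$ on $T$. The coverage condition $c_\xi(x)=\xi(F_x)\ge1$ is integrated against $\sigma$, and we swap the order of integration by Tonelli's theorem. This uses that the set $R_{T,R}$ is closed and therefore Borel. The result is
\[
 \sigma(T)\le\int_T\xi(F_x)\,d\sigma(x)=\int_{X_{T,R}}\sigma(B_T(v,s))\,d\xi(v,s).
\]
Next we bound the integrand pointwise by $M_\sigma(s)$. The function $M_\sigma$ is Borel, because it is nondecreasing. Since $\pi_*\xi\le\beta$ and the integrand depends only on $s$, the right side is at most $\int_{[0,R]}M_\sigma\,d\beta$. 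For the last step we use that $\int g\,d(\pi_*\xi)\le\int g\,d\beta$ for every nonnegative Borel $g$, which follows from measure order by approximating $g$ with simple functions.

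For (iii)$\Rightarrow$(i) we copy the limiting scheme of \cref{thm:compact-rounding}, but without any rounding. Choose a dense sequence $q_1,q_2,\ldots$ in $T$ and put $D_n=\{q_1,\ldots,q_n\}$. \Cref{lem:finite-demand-feasibility} gives $\xi_n\in\mathcal K_\beta$ with $\xi_n(F_{q_i})\ge1$ for all $i\le n$. By \cref{lem:budget-set-compact}, some subsequence converges weakly to a measure $\xi\in\mathcal K_\beta$. Each $F_{q_i}$ is closed, so the portmanteau theorem gives $\xi(F_{q_i})\ge\limsup_j\xi_{n_j}(F_{q_i})\ge1$ for every $i$.

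The main obstacle is extending this inequality from the dense set to every $x\in T$. The map $x\mapsto\xi(F_x)$ is upper semicontinuous, and that gives the wrong direction. Instead we fix $x$ and $\varepsilon>0$, and take $q_i$ with $d_T(x,q_i)<\varepsilon$. We then push $\xi$ forward under $\tau_\varepsilon(v,s)=(v,\min\{s+\varepsilon,R\})$. This is awkward near the upper end of the radius range and breaks the budget, so we drop that idea. The better route is a diagonal argument on a nested sequence $\varepsilon$-nets. Even then a closed-set limit still only gives the inequality on the dense set.

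The cleanest fix is to use compactness of the coverage sections directly. Take $q_{i_k}\to x$. The sets $F_{q_{i_k}}$ converge, in the sense that $\limsup_k F_{q_{i_k}}\subseteq F_x$. For every $\delta>0$, the closed neighbourhood $F_x^\delta=\{(v,s):d_T(x,v)\le s+\delta\}$ contains $F_{q_{i_k}}$ for all large $k$, and so $\xi(F_x^\delta)\ge1$. Letting $\delta\downarrow0$, continuity from above gives $\xi(F_x)=\xi\bigl(\bigcap_\delta F_x^\delta\bigr)\ge1$. Hence $\xi$ is a fractional marked-ball cover with $\pi_*\xi\le\beta$. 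This decreasing-intersection argument is the step we would check most carefully.
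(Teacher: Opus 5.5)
Your proof is correct. The implications (i)$\Rightarrow$(ii)$\Rightarrow$(iii) are the same as the paper's: Tonelli, the pointwise bound $\sigma(B_T(v,s))\le M_\sigma(s)$, and $\pi_*\xi\le\beta$.

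For (iii)$\Rightarrow$(i) you take a different limiting route.
\begin{itemize}
\item The paper never passes through a dense set. For \emph{every} $x\in T$ it defines the closed set $\mathcal A_x=\{\xi\in\mathcal K_\beta:\xi(F_x)\ge1\}$. By \cref{lem:finite-demand-feasibility} this family has the finite intersection property. Compactness of $\mathcal K_\beta$ (\cref{lem:budget-set-compact}) then gives a point of $\bigcap_{x\in T}\mathcal A_x$, so no extension step is needed.
\item You use a countable dense set and extract a weakly convergent subsequence. This implicitly uses that bounded sets of measures on a compact metric space are metrizable, so compactness is sequential. You then extend $\xi(F_q)\ge1$ from the dense set to all of $T$. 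For this you use the inclusion $F_q\subseteq F_x^\delta$ when $d_T(x,q)\le\delta$, and continuity from above of the finite measure $\xi$ along $F_x^\delta\downarrow F_x$. This extension is valid.
\end{itemize}
Your version is more concrete and parallels the proof of \cref{thm:compact-rounding}, at the cost of the extra extension step. The paper's version is shorter and needs only abstract compactness.

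One correction to your reasoning: upper semicontinuity of $x\mapsto\xi(F_x)$ is exactly the right direction, not the wrong one. Upper semicontinuity means the superlevel set $\{x:\xi(F_x)\ge1\}$ is closed. A closed set containing a dense subset of $T$ is all of $T$. Your decreasing-intersection argument is precisely a proof of this upper semicontinuity. So the abandoned detours through $\tau_\varepsilon$ and nested nets were unnecessary.
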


\begin{proof}
We use the separation and finite-intersection scheme standard in fractional covering duality; see Aharoni and Holzman~\cite{AharoniHolzman}. Assume first that (i) holds. Since $c_\xi(x)\ge1$ for every $x\in T$, by  Tonelli's theorem, we have for every finite positive Borel measure $\sigma$ on $T$,
\begin{align*}
 \sigma(T)
 &\le\int_Tc_\xi(x)\,d\sigma(x)\\
 &=\int_{X_{T,R}}\sigma(B_T(v,s))\,d\xi(v,s)\\
 &\le\int_{[0,R]}M_\sigma(s)\,d(\pi_*\xi)(s)\\
 &\le\int_{[0,R]}M_\sigma(s)\,d\beta(s).
\end{align*}
Thus (i) implies (ii), and (ii) implies (iii).

Assume (iii). For every $x\in T$, let $\mathcal A_x=\{\xi\in\mathcal K_\beta:\xi(F_x)\ge1\}$. The set $\mathcal A_x$ is closed because $F_x$ is closed and $\xi\mapsto\xi(F_x)$ is upper semicontinuous. By \cref{lem:finite-demand-feasibility}, the family $(\mathcal A_x)_{x\in T}$ has the finite intersection property. The compactness in \cref{lem:budget-set-compact} gives a measure in $\bigcap_{x\in T}\mathcal A_x$. It is a fractional marked-ball cover with radius measure at most $\beta$, proving (i).
\end{proof}

Combining lifting, compact rounding, and strengthened trimming gives an exact theorem for arbitrary radius budgets.

\begin{theorem}\label{thm:budgeted-duality}
Let $T$ be a nontrivial finite metric tree of length $L$, let $0<R\le L$, and let $\beta$ be a finite positive Borel measure on $[0,R]$, extended by zero to $\R_{\ge0}$. The following statements are equivalent.
\begin{enumerate}[label=\textup{(\roman*)}]
\item There is a probability measure $\nu$ on $\C(T,0)$ such that $E_\nu\le\beta$.
\item There is a fractional marked-ball cover $\xi$ on $X_{T,R}$ such that $\pi_*\xi\le\beta$.
\item Every finite positive Borel measure $\sigma$ on $T$ satisfies \eqref{eq:budgeted-dual-concentration}.
\end{enumerate}
It is enough in (iii) to test finite positive atomic measures.
\end{theorem}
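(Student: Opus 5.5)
The plan is to prove the cycle (i)$\Rightarrow$(ii)$\Rightarrow$(i) and to read off (ii)$\Leftrightarrow$(iii), including the atomic reduction, from \cref{prop:fractional-duality}.

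\emph{(ii)$\Leftrightarrow$(iii) and the atomic clause.} These follow at once from \cref{prop:fractional-duality}. Its items (i), (ii), (iii) coincide with items (ii) and (iii) of the present theorem and with the atomic version. Nothing new is needed here.

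\emph{(i)$\Rightarrow$(ii).} Let $\nu$ be a probability measure on $\C(T,0)$ with $E_\nu\le\beta$. Since $\beta$ is extended by zero outside $[0,R]$, we get $E_\nu((R,\infty))=0$. Hence almost every sampled radius lies in $[0,R]$, and $E_\nu([0,R])\le\beta([0,R])<\infty$. \Cref{lem:random-lifting} with $r=R$ then produces a fractional marked-ball cover $\xi$ on $X_{T,R}$ with $\pi_*\xi=E_\nu\le\beta$. One point needs care: the event ``some radius exceeds $R$'' must be $\nu$-null. This holds because its probability is at most $E_\nu((R,\infty))=0$.

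\emph{(ii)$\Rightarrow$(i).} Given $\xi$ on $X_{T,R}$ with $\pi_*\xi\le\beta$, apply \cref{cor:budgeted-rounding} with $r=R$. Its hypotheses hold because $T$ is nontrivial, so $L>0$, and because $0<R\le L$. It yields a probability measure $\nu$ on $\C(T,0)$ with $E_\nu\le\pi_*\xi\le\beta$. The assumption $R\le L$ is used exactly here: the trimming step in \cref{cor:random-trimming} needs every radius to be at most $L$, so that \cref{lem:trimming} applies.

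There is no real obstacle, since every step is an earlier result. The only delicate point is bookkeeping. The measure $\beta$ lives on $[0,R]$ but is compared with $E_\nu$ on $\R_{\ge0}$, so we must check that the extension by zero forces the radii into $[0,R]$ almost surely and that the inequalities transfer between the two ambient spaces.
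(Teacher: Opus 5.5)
Your proposal is correct and matches the paper's proof: (i)$\Rightarrow$(ii) by \cref{lem:random-lifting}, (ii)$\Rightarrow$(i) by \cref{cor:budgeted-rounding} with $R\le L$ used there, and (ii)$\Leftrightarrow$(iii) together with the atomic reduction taken directly from \cref{prop:fractional-duality}. Your check that $E_\nu((R,\infty))=0$ makes the event ``some radius exceeds $R$'' $\nu$-null, and that $E_\nu([0,R])\le\beta([0,R])<\infty$ gives finite mean size, is the bookkeeping the paper compresses into one sentence.
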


\begin{proof}
If (i) holds, domination by $\beta$ forces almost every sampled radius to lie in $[0,R]$ and gives finite expected cover size. Apply \cref{lem:random-lifting} to obtain (ii). If (ii) holds, \cref{cor:budgeted-rounding} gives (i), because $R\le L$. The equivalence of (ii) and (iii), as well as the final assertion, is \cref{prop:fractional-duality}.
\end{proof}

For the uniform target, the budgeted theorem becomes the following intrinsic characterization.

\begin{corollary}\label{thm:random-fractional-duality}
Let $T$ be a nontrivial finite metric tree of length $L$, and let $r>0$. The following statements are equivalent.
\begin{enumerate}[label=\textup{(\roman*)}]
\item There is a probability measure $\nu$ on $\C(T,0)$ such that $E_\nu\le\Lam{T}{r}$.
\item There is a fractional marked-ball cover $\xi$ on $X_{T,r}$ such that $\pi_*\xi\le\Lam{T}{r}$.
\item Every finite positive Borel measure $\sigma$ on $T$ satisfies
\begin{equation}\label{eq:dual-concentration}
 \sigma(T)\le\frac{L}{r^2}\int_0^rM_\sigma(s)\,ds.
\end{equation}
\end{enumerate}
It is enough in (iii) to test finite positive atomic measures.
\end{corollary}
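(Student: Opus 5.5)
The plan is to specialize \cref{thm:budgeted-duality} to $R=r$ and $\beta=\Lam{T}{r}=(L/r)U[0,r]$. Everything then reduces to two checks: the hypothesis $R\le L$ of that theorem, and the identification of the dual integral with the right side of \eqref{eq:dual-concentration}.

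\textbf{Evaluating the dual integral.} Since $\Lam{T}{r}$ has density $L/r^2$ on $[0,r]$, the right side of \eqref{eq:budgeted-dual-concentration} is
\[
\int_{[0,r]}M_\sigma\,d\Lam{T}{r}=\frac{L}{r^2}\int_0^rM_\sigma(s)\,ds.
\]
Here $M_\sigma$ is Borel because it is nondecreasing. Hence condition (iii) of \cref{thm:budgeted-duality} coincides with (iii) of the corollary. The atomic-testing clause carries over verbatim.

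\textbf{The case $r\le L$.} With $R=r\le L$, \cref{thm:budgeted-duality} applies directly and gives (i)$\Leftrightarrow$(ii)$\Leftrightarrow$(iii).

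\textbf{The case $r>L$.} This is the only real obstacle, since the trimming step used in \cref{cor:budgeted-rounding} needs radii at most $L$. I would argue that all three statements fail, which makes them vacuously equivalent.

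First, (iii) fails. Take $\sigma=\delta_x+\delta_y$ with $x,y$ at distance $D=\diamT(T)>0$. Then $M_\sigma(s)=1$ for $s<D/2$ and $M_\sigma(s)=2$ for $s\ge D/2$. This gives
\[
\frac{L}{r^2}\int_0^rM_\sigma\,ds=\frac{L}{r^2}\Bigl(2r-\frac D2\Bigr)<\frac{2L}{r}<2=\sigma(T),
\]
using $r>L$.

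Second, (i) fails. Any $0$-good cover has at least one radius, and $\Lam{T}{r}$ has total mass $L/r<1$. So domination would require $1\le E_\nu(\R_{\ge0})\le L/r<1$, which is impossible.

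Third, (ii) fails. Either apply the implication (ii)$\Rightarrow$(iii) from \cref{prop:fractional-duality}, which has no restriction on $R$, or observe directly that coverage $c_\xi\ge1$ forces $\xi(X)\ge1>L/r$.

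Combining the two cases gives the equivalence for every $r>0$. The only delicate point is not overlooking the regime $r>L$, where \cref{thm:budgeted-duality} is not directly applicable.
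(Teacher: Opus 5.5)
Your proposal is correct and follows the paper's own proof. For $r\le L$ you apply \cref{thm:budgeted-duality} with $R=r$ and $\beta=\Lam{T}{r}$, and for $r>L$ you note that all three statements fail because $\Lam{T}{r}$ has total mass $L/r<1$. The only difference is cosmetic: the paper refutes (iii) with a single unit point mass, for which the right side of \eqref{eq:dual-concentration} equals $L/r<1$ directly. That is simpler than your two-point measure, although your computation is also correct.
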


\begin{proof}
For $0<r\le L$, this is \cref{thm:budgeted-duality} with $R=r$ and $\beta=\Lam{T}{r}$. If $r>L$, the target has total mass $L/r<1$, so neither a random nor a fractional cover can be dominated by it; the concentration inequality also fails for a unit point mass.
\end{proof}

\section{Consequences, rigidity, and sharp extensions}\label{sec:consequences}

The uniform criterion \eqref{eq:dual-concentration} first gives volume and packing obstructions. We then use the metric radius to extend the existence range, determine the exact threshold for equal-arm stars, and finish with deterministic extraction and stability.

\subsection{Volume and packing obstructions}

Let $\ell_T$ denote length measure on $T$, and define the maximal ball-volume profile by $V_T(s)=\max_{v\in T}\ell_T(B_T(v,s))$.
Since $V_T=M_{\ell_T}$, the measurability observation preceding \cref{lem:budget-set-compact} shows that $V_T$ is nondecreasing and Borel measurable.

\begin{corollary}\label{cor:budget-obstructions}
Let $T$ be a nontrivial finite metric tree of length $L$, let $0<R\le L$, and let $\beta$ be a radius budget satisfying the equivalent conditions of \cref{thm:budgeted-duality}.
\begin{enumerate}[label=\textup{(\roman*)}]
\item The maximal ball-volume profile satisfies
\begin{equation}\label{eq:budget-volume}
 L\le\int_{[0,R]}V_T(s)\,d\beta(s).
\end{equation}
\item If $T$ contains $m\ge2$ points whose pairwise distances are at least $\delta>0$, then
\begin{equation}\label{eq:budget-packing}
 m\le\beta\bigl([0,R]\cap[0,\delta/2)\bigr)
   +m\,\beta\bigl([0,R]\cap[\delta/2,\infty)\bigr).
\end{equation}
\end{enumerate}
\end{corollary}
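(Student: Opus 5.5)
Both parts follow by testing condition (iii) of \cref{thm:budgeted-duality}, i.e.\ inequality \eqref{eq:budgeted-dual-concentration}, against a suitable demand measure $\sigma$. Since $\beta$ satisfies the equivalent conditions, every finite positive Borel measure $\sigma$ on $T$ satisfies $\sigma(T)\le\int_{[0,R]}M_\sigma(s)\,d\beta(s)$. We therefore only need to choose $\sigma$ and bound $M_\sigma$.

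For (i), take $\sigma=\ell_T$, the length measure. It is a finite positive Borel measure with $\sigma(T)=L$. By definition $M_{\ell_T}=V_T$, and $V_T$ is Borel by the measurability remark preceding \cref{lem:budget-set-compact}. Substituting into \eqref{eq:budgeted-dual-concentration} gives \eqref{eq:budget-volume} directly.

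For (ii), let $x_1,\ldots,x_m$ be the given points with pairwise distances at least $\delta$, and take the atomic measure $\sigma=\sum_{i=1}^m\delta_{x_i}$, so $\sigma(T)=m$. We bound $M_\sigma$ in two regimes.
\begin{enumerate}
\item Trivially, $M_\sigma(s)\le m$ for every $s$.
\item If $s<\delta/2$, a ball $B_T(v,s)$ contains at most one atom. Indeed, two atoms $x_i,x_j$ in the ball would give $d_T(x_i,x_j)\le2s<\delta$ by the triangle inequality. Hence $M_\sigma(s)\le1$ on $[0,\delta/2)$.
\end{enumerate}
Splitting the integral over $[0,R]\cap[0,\delta/2)$ and $[0,R]\cap[\delta/2,\infty)$ then yields \eqref{eq:budget-packing}.

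No step is a genuine obstacle. The only points to check are that $\ell_T$ is admissible as a demand measure, which holds because (iii) covers all finite Borel measures, and that the inequality $M_\sigma(s)\le1$ is applied only for strictly $s<\delta/2$. The hypothesis $m\ge2$ is not needed for the inequality itself; it only makes the statement nontrivial.
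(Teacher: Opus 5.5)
Your proposal is correct and follows essentially the same route as the paper. Like the paper, you apply the concentration criterion of \cref{thm:budgeted-duality} to the length measure $\ell_T$, using $M_{\ell_T}=V_T$, and to the sum of unit point masses on the $\delta$-separated set, bounding $M_\sigma$ by $1$ below $\delta/2$ and by $m$ elsewhere.
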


\begin{proof}
Apply the concentration criterion in \cref{thm:budgeted-duality} first to length measure $\ell_T$, for which $M_{\ell_T}=V_T$, and then to the sum of unit point masses on the separated set. A ball of radius less than $\delta/2$ contains at most one of the selected points, while every ball contains at most $m$ of them.
\end{proof}

\begin{corollary}\label{cor:volume-profile}
If the equivalent conditions of \cref{thm:random-fractional-duality} hold, then $r^2\le\int_0^rV_T(s)\,ds$.
\end{corollary}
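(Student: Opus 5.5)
We specialize \cref{cor:budget-obstructions}(i) to the uniform budget. Assume the equivalent conditions of \cref{thm:random-fractional-duality}. That corollary is stated for the budgeted theorem with $0<R\le L$, so we first check that $r\le L$.

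If $r>L$, the proof of \cref{thm:random-fractional-duality} shows that condition (i) fails, because the target has total mass $L/r<1$. So under the hypothesis we have $r\le L$, and we may take $R=r$ and $\beta=\Lam{T}{r}=(L/r^2)\,\one_{[0,r]}(s)\,ds$. This $\beta$ satisfies the equivalent conditions of \cref{thm:budgeted-duality}.

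Inequality \eqref{eq:budget-volume} then reads
\[
 L\le\frac{L}{r^2}\int_0^rV_T(s)\,ds.
\]
Since $T$ is nontrivial, $L>0$. Multiplying by $r^2/L$ gives $r^2\le\int_0^rV_T(s)\,ds$.

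Alternatively, we can apply \eqref{eq:dual-concentration} directly with $\sigma=\ell_T$. Here $\sigma(T)=L$ and $M_{\ell_T}=V_T$, and the same division gives the result. This route avoids the case distinction on $r$ altogether.

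Nothing here is a real obstacle. The only points to check are that $\ell_T$ is a finite positive Borel measure, so that part (iii) of \cref{thm:random-fractional-duality} applies, and that $V_T$ is Borel measurable. Both facts were recorded before the corollary.
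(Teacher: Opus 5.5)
Your proposal is correct and matches the paper's proof, which likewise applies \eqref{eq:budget-volume} with $R=r$ and $\beta=\Lam{T}{r}$ and then cancels $L>0$. Your explicit check that $r\le L$, needed for \cref{cor:budget-obstructions} to apply, fills in a detail the paper leaves implicit. Your alternative route, applying \eqref{eq:dual-concentration} to $\sigma=\ell_T$, is simply the proof of \cref{cor:budget-obstructions}(i) written out directly.
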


\begin{proof}
Apply \eqref{eq:budget-volume} with $R=r$ and $\beta=\Lam{T}{r}$, and cancel $L>0$.
\end{proof}

\begin{corollary}\label{cor:packing-obstruction}
Suppose that $T$ contains $m\ge2$ points whose pairwise distances are at least $\delta>0$. If the equivalent conditions of \cref{thm:random-fractional-duality} hold, then
\begin{enumerate}[label=\textup{(\roman*)}]
\item if $r\le\delta/2$, then $r\le L/m$;
\item if $r>\delta/2$, then
\begin{equation}\label{eq:packing-obstruction}
 r^2\le L\left(r-\frac{\delta}{2}\left(1-\frac1m\right)\right).
\end{equation}
\end{enumerate}
\end{corollary}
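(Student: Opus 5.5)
The plan is to specialize \eqref{eq:budget-packing} from \cref{cor:budget-obstructions}(ii) to the uniform budget. By \cref{thm:random-fractional-duality}, the equivalent conditions there coincide with those of \cref{thm:budgeted-duality} for $R=r$ and $\beta=\Lam{T}{r}$. One caveat: \cref{thm:budgeted-duality} needs $r\le L$. If $r>L$, then (as noted in the proof of \cref{thm:random-fractional-duality}) the conditions cannot hold, so there is nothing to prove. So we assume $r\le L$. Alternatively, we can bypass this issue by working directly with condition (iii) of \cref{thm:random-fractional-duality}: we apply \eqref{eq:dual-concentration} to $\sigma=\sum_{i=1}^m\delta_{x_i}$, where $x_1,\ldots,x_m$ are the separated points. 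This route is cleaner, and it is the one I would use.

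The key steps are as follows. First, bound $M_\sigma$. A ball of radius $s<\delta/2$ has diameter less than $\delta$, so it contains at most one $x_i$; hence $M_\sigma(s)\le1$ for $s<\delta/2$. Every ball contains at most $m$ of the points, so $M_\sigma(s)\le m$ for all $s$. (We have $M_\sigma(s)\ge1$ always, but only the upper bounds are needed.) Second, insert these bounds into \eqref{eq:dual-concentration}:
\[
 m\le\frac{L}{r^2}\int_0^r M_\sigma(s)\,ds.
\]

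Then split into the two cases.

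\emph{Case (i), $r\le\delta/2$.} Every $s\in[0,r)$ satisfies $s<\delta/2$, so the integral is at most $r$. This gives $m\le L/r$, i.e.\ $r\le L/m$. The endpoint $s=r=\delta/2$ has Lebesgue measure zero and is harmless.

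\emph{Case (ii), $r>\delta/2$.} The integral is at most $\delta/2+m(r-\delta/2)$. This gives
\[
 m r^2\le L\bigl(mr-(m-1)\delta/2\bigr).
\]
Dividing by $m$ yields $r^2\le L\bigl(r-\tfrac{\delta}{2}(1-\tfrac1m)\bigr)$, which is \eqref{eq:packing-obstruction}.

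There is no real obstacle in this argument. The only points needing care are the strictness at the radius $\delta/2$ and ensuring that the duality corollary applies, which it does directly since it is stated for every $r>0$.
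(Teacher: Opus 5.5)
Your proposal is correct and is essentially the paper's argument. The paper applies \eqref{eq:budget-packing} with $R=r$ and $\beta=\Lam{T}{r}$, and that inequality is itself obtained by testing the concentration criterion on the sum of unit point masses, which is exactly what you do directly. Working from condition (iii) of \cref{thm:random-fractional-duality} also cleanly avoids the $R\le L$ hypothesis of \cref{cor:budget-obstructions}, which the paper leaves implicit.
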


\begin{proof}
Apply \eqref{eq:budget-packing} with $R=r$ and $\beta=\Lam{T}{r}$. If $r\le\delta/2$, it gives $m\le L/r$, which is (i). If $r>\delta/2$, it gives $m\le(L/r^2)(\delta/2+m(r-\delta/2))$. After division by $m$, this is \eqref{eq:packing-obstruction}.
\end{proof}

\subsection{A diameter-dependent extension}

Recall from \cref{lem:tree-radius} that a nontrivial finite metric tree of diameter $D$ has metric radius $D/2$.

\begin{theorem}\label{thm:diameter-extension}
Let $T$ be a nontrivial finite metric tree of length $L$, diameter $D$, and metric radius $\rho=D/2$. For every $r$ satisfying
\begin{equation}\label{eq:diameter-range}
 0<r\le L-\rho=L-\frac D2,
\end{equation}
there is a probability measure $\nu$ on $\C(T,0)$ such that $E_\nu\le\Lam{T}{r}$.
\end{theorem}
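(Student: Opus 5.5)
The plan is to reduce the new range to \cref{thm:main} (more precisely, to \cref{prop:fractional-main}) through a single application of the zero-error replacement certificate of \cref{lem:exact-replacement}.

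\emph{Easy range.} Since $D\le L$, we have $\rho=D/2\le L/2$. If $r\le L/2$, the conclusion is exactly \cref{thm:main}, so I only need to treat $L/2<r\le L-\rho$. In this range $\rho\le L/2<r$.

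\emph{Step 1: a one-ball random cover and a certificate.} By \cref{lem:tree-radius}, one ball of every radius $y\ge\rho$, placed at a center of $T$, covers $T$.
\begin{itemize}
\item Choose $y$ uniformly in $[\rho,r]$ and place one ball of radius $y$ at a fixed center. This is a random finite marked cover $\mathsf P_0$ with $\pi_*I_{\mathsf P_0}=U[\rho,r]$. The measurability is trivial here, or one can use the Borel lifting \eqref{eq:borel-lifting}.
\item This has the form required by \cref{lem:exact-replacement} with $k=1$, $\alpha_1=1$ and $a_1=\rho<r$.
\item The hypothesis $\alpha_1(r+a_1)=r+\rho\le L$ is precisely the range condition \eqref{eq:diameter-range}. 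This is where the threshold $L-D/2$ enters.
\item \cref{lem:exact-replacement} therefore yields a certificate $(\eta;(w,\rho))$ for $(T,r)$. It satisfies $c_\eta+w\ge1$ on $T$ and $\pi_*\eta+w\Lam{T}{\rho}\le\Lam{T}{r}$. Since $T$ is nontrivial, $\rho>0$, so the scale $\rho$ is genuinely positive. If the weight vanishes, $\eta$ is already a fractional cover.
\end{itemize}

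\emph{Step 2: resolve the unresolved scale.} Because $L\ge D=2\rho$, \cref{prop:fractional-main} applies at scale $\rho$. It gives a fractional marked-ball cover $\xi_\rho$ on $X_{T,\rho}\subseteq X_{T,r}$ with $\pi_*\xi_\rho\le\Lam{T}{\rho}$. Set $\xi=\eta+w\xi_\rho$, in the spirit of \cref{lem:composition}. Then:
\begin{itemize}
\item coverage: $c_\xi\ge c_\eta+w\ge1$;
\item budget: $\pi_*\xi\le\pi_*\eta+w\Lam{T}{\rho}\le\Lam{T}{r}$.
\end{itemize}
So $\xi$ is a fractional marked-ball cover on $X_{T,r}$ with the exact uniform budget.

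\emph{Step 3: round.} Since $r\le L-\rho<L$, \cref{cor:budgeted-rounding} applies. It converts $\xi$ into a probability measure $\nu$ on $\C(T,0)$ with $E_\nu\le\pi_*\xi\le\Lam{T}{r}$.

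The only point needing care, rather than a real obstacle, is checking the hypotheses of \cref{lem:exact-replacement}: the strict inequality $a_1<r$, and the weight inequality $r+\rho\le L$. Both hold exactly on the stated range. Alternatively, one could verify condition (iii) of \cref{thm:random-fractional-duality} directly, but the certificate route avoids any estimation.
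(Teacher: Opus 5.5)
Your proof is correct and is essentially the paper's argument. The paper likewise combines the fractional cover $\xi_\rho$ from \cref{prop:fractional-main} with a single ball at a center of $T$ whose radius is uniform on $[\rho,r]$, and then rounds with \cref{cor:budgeted-rounding}. The only difference is bookkeeping: the paper writes the mixture $(\rho^2/r^2)\xi_\rho+\zeta$ explicitly and checks coverage via $(r-\rho)(L-\rho-r)\ge0$. You instead let \cref{lem:exact-replacement} produce the same mixture, rescaled so that the guaranteed coverage is exactly one, and its hypothesis $r+\rho\le L$ encodes that same inequality.
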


\begin{proof}
By \cref{lem:tree-radius}, $\rho=D/2$. If $r\le L/2$,  then we apply \cref{thm:main}. Suppose that
$L/2<r\le L-\rho$. Then $\rho<L/2<r$. By \cref{prop:fractional-main}, applied at scale $\rho$, there is a fractional marked-ball cover $\xi_\rho$ on $X_{T,\rho}$ such that $\pi_*\xi_\rho\le(L/\rho^2)\one_{[0,\rho]}(s)\,ds$. Choose a center $c$ of $T$, so $B_T(c,s)=T$ for every $s\ge\rho$. For every continuous $f$, define a measure $\zeta$ on $X_{T,r}$ by $\int f\,d\zeta=(L/r^2)\int_\rho^r f(c,s)\,ds$, and put $\xi_r=(\rho^2/r^2)\xi_\rho+\zeta$.
Its radius measure satisfies
\[
 \pi_*\xi_r
 \le\frac{L}{r^2}\one_{[0,\rho]}(s)\,ds
   +\frac{L}{r^2}\one_{[\rho,r]}(s)\,ds
 =\Lam{T}{r}.
\]
For every $x\in T$, $c_{\xi_r}(x)\ge(\rho^2+L(r-\rho))/r^2$. The numerator is at least $r^2$ because $\rho^2+L(r-\rho)-r^2=(r-\rho)(L-\rho-r)\ge0$. Thus $\xi_r$ is a fractional marked-ball cover. Since
$r\le L-\rho<L$, \cref{cor:budgeted-rounding} gives the required probability measure.
\end{proof}

\begin{corollary}\label{cor:nonpath-extension}
If $T$ is not a metric interval, then $\diamT(T)<L$, and the range in \eqref{eq:diameter-range} strictly contains $0<r\le L/2$.
\end{corollary}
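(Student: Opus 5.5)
The plan is to reduce both claims to the leaf characterization in \cref{lem:interval-split} and a direct comparison of the two ranges. Since $T$ is nontrivial and not a metric interval, \cref{lem:interval-split} shows that $T$ has at least three leaves. It then suffices to prove the strict inequality $D=\diamT(T)<L$. Once that is known, $L-D/2>L-L/2=L/2$. Hence every $r$ with $0<r\le L/2$ satisfies \eqref{eq:diameter-range}, while any $r\in(L/2,L-D/2]$ lies in the new range but not in the old one, so the containment is strict.

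For the diameter inequality, choose $x,y\in T$ with $d_T(x,y)=D$; the maximum is attained by compactness. The arc $T[x,y]$ is a metric subtree of length $D$, so it suffices to show that $T\setminus T[x,y]$ contains a nondegenerate segment.

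Because $T$ has at least three leaves, some leaf $z$ lies off the arc, since the arc itself has at most two points of degree one in $T$. To see this, note that an interior point of the arc has two directions along the arc, so its degree in $T$ is at least two. Hence only $x$ or $y$ could be leaves of $T$ lying on the arc.

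Let $p$ be the nearest point of $T[x,y]$ to $z$. Then $T[p,z]$ meets the arc only at $p$ and has positive length $d_T(p,z)>0$, because $z$ is not on the arc. Since the two arcs share only the point $p$, additivity of length over a decomposition of the subtree $T[x,y]\cup T[p,z]$ gives
\[
L\ge |T[x,y]|+|T[p,z]|=D+d_T(p,z)>D.
\]

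The only mildly delicate step is justifying that the arc $T[x,y]$ cannot contain all the leaves. The degree argument above handles this, and no other obstacle arises.
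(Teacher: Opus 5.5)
Your proof is correct and takes essentially the same route as the paper. The paper also compares $L$ with the length $D$ of a diameter arc and uses the fact that a non-interval tree has positive length off that arc, which gives $L-D/2>L/2$ and the strict containment. Your detour through \cref{lem:interval-split} and leaf counting is valid but unnecessary: $T$ not being an interval already means $T\ne T[x,y]$, so some $z\notin T[x,y]$ exists and your nearest-point argument applies to it directly.
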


\begin{proof}
A diameter arc has length $D$. If $D=L$, no positive-length edge lies outside that arc, so $T$ is an interval. Hence a non-interval tree has $D<L$, and therefore $L-D/2>L/2$.
\end{proof}

We next identify when one ball in every sample realizes the target.

\begin{proposition}\label{prop:one-ball-extension}
Let $T$ be a nontrivial finite metric tree of length $L$ and metric radius $\rho$, and let $r>0$. There is a probability measure supported on one-ball covers in $\C(T,0)$ whose expected radius measure is at most $\Lam{T}{r}$ if and only if
\begin{equation}\label{eq:one-ball-condition}
 r^2\le L(r-\rho).
\end{equation}
\end{proposition}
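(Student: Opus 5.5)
The plan is to prove the two directions separately. Sufficiency uses an explicit mixture of one-ball covers, and necessity uses a length-measure test combined with the fact that a single ball covers $T$ only if its radius is at least $\rho$.

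\emph{Sufficiency.} Assume $r^2\le L(r-\rho)$; in particular $r>\rho$. Let $c$ be a center of $T$ (\cref{lem:tree-radius}), so that $B_T(c,s)=T$ for all $s\ge\rho$. I will take $\nu$ to be the law of the one-ball cover $(s)$, where $s$ is drawn from the probability density $(1/(r-\rho))\one_{[\rho,r]}(s)\,ds$. This density is dominated by $(L/r^2)\one_{[0,r]}$ exactly when $1/(r-\rho)\le L/r^2$, which is the hypothesis \eqref{eq:one-ball-condition}.

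We must also check that every sampled cover is $0$-good, that is, $s\le L$. This holds because $s\le r$, and the hypothesis gives $r^2\le L(r-\rho)\le Lr$, so $r\le L$. Finally, $E_\nu$ is exactly this density, so $E_\nu\le\Lam{T}{r}$.

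\emph{Necessity.} Suppose $\nu$ is supported on one-ball covers in $\C(T,0)$ and $E_\nu\le\Lam{T}{r}$. Each sample is a single radius $s$ with $B_T(v,s)=T$ for some $v$. Hence $s\ge\rho$ by the definition of the metric radius, and $s\le L$ by $0$-goodness. Because each sample contributes exactly one radius, $E_\nu$ is a probability measure. It is dominated by $\Lam{T}{r}$, so it is concentrated on $[\rho,\infty)\cap[0,r]$, and this set must be nonempty; thus $\rho\le r$.

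Mass one against a density at most $L/r^2$ on $[\rho,r]$ then forces $1\le (L/r^2)(r-\rho)$, which is \eqref{eq:one-ball-condition}. If $\rho=r$, the interval is degenerate and has $\Lam{T}{r}$-measure zero, so no such $\nu$ exists; this is consistent with the inequality failing, since its right side is then $0<r^2$. Nontriviality of $T$ gives $\rho>0$, so no degenerate cases remain.

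The only point requiring care is measurability and the claim that a one-ball cover needs radius at least $\rho$, which is immediate from the definition of the metric radius. I therefore expect no real obstacle; the main thing to keep careful track of is the bookkeeping showing that the mass-one condition is exactly \eqref{eq:one-ball-condition}.
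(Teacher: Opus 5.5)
Your proposal is correct and follows essentially the same route as the paper. For necessity you use that a covering ball has radius at least $\rho$ and that domination puts the mass-one measure $E_\nu$ on $[\rho,r]$, giving $1\le L(r-\rho)/r^2$; for sufficiency you place a uniform radius on $[\rho,r]$ at a center, which is dominated exactly when $1/(r-\rho)\le L/r^2$. Your bound $r\le L$ for $0$-goodness is all that is needed; the paper records the slightly stronger $r<L$, which follows from $\rho>0$.
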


\begin{proof}
Suppose first that such a probability measure exists. A ball covering $T$ has radius at least $\rho$, while the measure domination forces the sampled radius to lie in $[0,r]$ almost surely. Hence $1=E_\nu([\rho,r])\le L(r-\rho)/r^2$, which is \eqref{eq:one-ball-condition}.

Conversely, assume \eqref{eq:one-ball-condition}. It implies $r>\rho$ and, since $\rho>0$, also $r<L$. Choose a center $c$ of $T$ and let $R$ be uniform on $[\rho,r]$. Then $B_T(c,R)=T$ and $(R)\in\C(T,0)$ for every realization. Moreover, $U[\rho,r]\le(L/r)U[0,r]$ because $1/(r-\rho)\le L/r^2$.
\end{proof}

\subsection{The exact threshold for equal-arm stars}

Let $S_{k,a}$ denote the metric star with $k\ge2$ arms, each of length $a>0$. Thus $|S_{k,a}|=ka$ and $\rho(S_{k,a})=a$.

\begin{theorem}\label{thm:star-threshold}
Let $k\ge2$, $a>0$, and $r>0$. There is a probability measure $\nu$ on $\C(S_{k,a},0)$ such that $E_\nu\le(ka/r)U[0,r]$ if and only if $0<r\le(k-1)a$.
\end{theorem}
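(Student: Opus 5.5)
The plan is to prove the two directions separately: sufficiency from the diameter-dependent extension, and necessity from the packing obstruction applied to the $k$ leaves of the star.

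For sufficiency, I would just specialize \cref{thm:diameter-extension}. The star $S_{k,a}$ has length $L=ka$. Any two leaves are at distance $2a$, so its diameter is $D=2a$. Hence
\[
 L-\frac D2=ka-a=(k-1)a,
\]
and \cref{thm:diameter-extension} gives the required probability measure on $\C(S_{k,a},0)$ for every $0<r\le(k-1)a$. No further construction is needed.

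For necessity, suppose such a $\nu$ exists with $r>(k-1)a$. First I would dispose of the range $r>L=ka$. There the target $\Lam{T}{r}$ has total mass $L/r<1$, which cannot dominate the expected radius measure of a random cover. This is exactly the degenerate case handled in the proof of \cref{thm:random-fractional-duality}. So assume $(k-1)a<r\le ka$. Then the equivalent conditions of \cref{thm:random-fractional-duality} hold, and I would apply \cref{cor:packing-obstruction} with the following data:
\begin{itemize}
 \item the $m=k\ge2$ leaves as the separated points;
 \item separation $\delta=2a$, since the pairwise distances of the leaves are exactly $2a$.
\end{itemize}
Since $k\ge2$ we have $r>(k-1)a\ge a=\delta/2$, so part (ii) applies and gives
\[
 r^2\le ka\Bigl(r-a\bigl(1-\tfrac1k\bigr)\Bigr)=kar-(k-1)a^2 .
\]
Rearranging, this says $(r-a)\bigl(r-(k-1)a\bigr)\le0$.

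To finish, observe that $r>(k-1)a$ and $r>a$ make both factors strictly positive; when $k=2$ they coincide and the product is $(r-a)^2>0$. This contradicts the displayed inequality, so no such $\nu$ exists for $r>(k-1)a$.

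The only delicate point I expect is checking the hypotheses of the obstruction: $R=r\le L$ in \cref{cor:budget-obstructions}, and the case split at $\delta/2=a$ in \cref{cor:packing-obstruction}. Both are settled by the reduction above, so the factorization is the whole argument.
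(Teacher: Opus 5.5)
Your proof is correct and follows essentially the paper's route: sufficiency comes from \cref{thm:diameter-extension} with $L-D/2=(k-1)a$, and necessity from the endpoint packing bound with $\delta=2a$, which gives the same factorization $(r-a)(r-(k-1)a)\le0$. The only difference is packaging: the paper does the endpoint count directly on each sampled cover, using $q(s)=1$ for $s<a$ and $q(s)=k$ for $s\ge a$, so it needs neither the duality machinery behind \cref{cor:packing-obstruction} nor your separate treatment of $r>L$.
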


\begin{proof}
If $r\le(k-1)a=|S_{k,a}|-\rho(S_{k,a})$, sufficiency follows from \cref{thm:diameter-extension}.

For necessity, let $z_1,\ldots,z_k$ be the endpoints of the arms. A ball of radius $s<a$ contains at most one endpoint, because distinct endpoints are at distance $2a$. A ball of radius $s\ge a$ contains at most $k$ endpoints. Define
\[
 q(s)=
 \begin{cases}
 1,&0\le s<a,\\
 k,&s\ge a.
 \end{cases}
\]
Every ball cover $(r_1,\ldots,r_m)$ satisfies
$k\le\sum_iq(r_i)$. Therefore $k\le\int q\,dE_\nu\le(ka/r^2)\int_0^r q(s)\,ds$. If $r\le a$, then $r\le(k-1)a$ already. If $r>a$, the last inequality becomes $r^2-kar+(k-1)a^2\le0$, or equivalently $(r-a)(r-(k-1)a)\le0$. Since $r>a$, we obtain $r\le(k-1)a$.
\end{proof}

\begin{remark}
For $k=2$, \cref{thm:star-threshold} is the interval threshold. For every $k\ge3$, it shows that the diameter range in \cref{thm:diameter-extension} is exact on $S_{k,a}$. Thus the extension is not only strict for non-path trees; it is best possible for an infinite family with arbitrarily many leaves.
\end{remark}

\subsection{Deterministic cost bounds}

\begin{proposition}\label{prop:cost-extraction}
Let $T$ be a finite metric tree, let $R>0$, and let $\beta$ be a finite positive Borel measure on $[0,R]$. Suppose that a probability measure $\nu$ on $\C(T,0)$ satisfies $E_\nu\le\beta$. If $\phi:[0,R]\to[0,\infty)$ is Borel and $\beta$-integrable, then some cover $(r_1,\ldots,r_m)\in\C(T,0)$ satisfies $\sum_{i=1}^m\phi(r_i)\le\int_{[0,R]}\phi(s)\,d\beta(s)$. In particular, for the uniform budget $\beta=\Lam{T}{r}$ this becomes $\sum_{i=1}^m\phi(r_i)\le(L/r^2)\int_0^r\phi(s)\,ds$.
\end{proposition}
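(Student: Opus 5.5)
The argument is an averaging (first-moment) argument. We show that the expected $\phi$-cost of a $\nu$-random cover is at most $\int\phi\,d\beta$, and then pick a cover whose cost is at most this mean. For each cover $\mathbf r=(r_1,\ldots,r_m)\in\C(T,0)$ put $\Phi(\mathbf r)=\sum_{i=1}^m\phi(r_i)$, with $\Phi=0$ on the empty tuple. On each layer $\C_m(T,0)$ this is a finite sum of Borel functions of the coordinates, hence Borel. So $\Phi$ is Borel on the disjoint union $\C(T,0)$.

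The key step is the Campbell-type identity
\[
 \int_{\C(T,0)}\Phi\,d\nu=\int_{\R_{\ge0}}\phi\,dE_\nu .
\]
I would prove it in three stages:
\begin{enumerate}
\item For $\phi=\one_A$ with $A$ Borel, it is exactly the definition of $E_\nu$.
\item By linearity it extends to nonnegative simple functions.
\item Monotone convergence, applied on both sides, extends it to all nonnegative Borel $\phi$. Both sides may be $+\infty$ at this stage.
\end{enumerate}
One point needs care. $E_\nu\le\beta$ with $\beta$ supported on $[0,R]$ gives $E_\nu((R,\infty))=0$. So $\nu$-almost every cover has all radii in $[0,R]$, and $\phi$ only needs to be defined there; on the null set we may extend it by $0$. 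Then, since $\phi\ge0$ and $E_\nu\le\beta$ (which gives $\int\phi\,dE_\nu\le\int\phi\,d\beta$ by approximating with simple functions), we get
\[
 \int\Phi\,d\nu\le\int_{[0,R]}\phi\,d\beta<\infty .
\]

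To finish, $\nu$ is a probability measure, so the set $\{\mathbf r:\Phi(\mathbf r)\le\int\Phi\,d\nu\}$ has positive $\nu$-measure. Otherwise integrating the strict inequality would give $\int\Phi\,d\nu>\int\Phi\,d\nu$; this uses that the mean is finite. This positive-measure set meets the full-measure set of covers with radii in $[0,R]$, so we obtain a cover $(r_1,\ldots,r_m)\in\C(T,0)$ with all $r_i\in[0,R]$ and $\sum_i\phi(r_i)\le\int\phi\,d\beta$.

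For the uniform case, substitute $\beta=\Lam{T}{r}=(L/r)U[0,r]$, whose density on $[0,r]$ is $L/r^2$. The only mild obstacle is the measurability and null-set bookkeeping for $\Phi$ on the disjoint union; the rest is routine.
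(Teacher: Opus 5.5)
Your proposal is correct and follows essentially the same first-moment argument as the paper. The paper obtains $\E_\nu[\sum_i\phi(r_i)]=\int_{[0,R]}\phi\,dE_\nu\le\int_{[0,R]}\phi\,d\beta$ by citing Tonelli, where you derive it by hand through indicators, simple functions and monotone convergence; both then conclude that the set of covers meeting the bound has positive $\nu$-measure, with the same null-set care about radii outside $[0,R]$.
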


\begin{proof}
Domination by $\beta$ forces all sampled radii to lie in $[0,R]$ almost surely. By Tonelli's theorem, $\E_\nu[\sum_i\phi(r_i)]=\int_{[0,R]}\phi\,dE_\nu\le\int_{[0,R]}\phi\,d\beta$. The set of sampled covers satisfying the same upper bound has positive $\nu$-measure; otherwise the displayed random variable would be strictly larger than its upper bound almost surely. In particular, such a cover exists.
\end{proof}

\begin{corollary}\label{cor:deterministic-bounds}
Let $T$ be a nontrivial finite metric tree of length $L$ and metric radius $\rho$, and let $0<r\le L-\rho$.
\begin{enumerate}[label=\textup{(\roman*)}]
\item There is a $0$-good cover of $T$ using at most $\lfloor L/r\rfloor$ balls.
\item For every $p>0$, there is a $0$-good cover $(r_1,\ldots,r_m)$ satisfying $\sum_{i=1}^m r_i^p\le Lr^{p-1}/(p+1)$. In particular, one may require $\sum_i r_i\le L/2$.
\item For every fixed $t\in[0,r]$, there is a $0$-good cover, depending on $t$, such that $|\{i:r_i\ge t\}|\le\lfloor L(r-t)/r^2\rfloor$.
\end{enumerate}
\end{corollary}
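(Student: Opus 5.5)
The plan is to combine \cref{thm:diameter-extension} with \cref{prop:cost-extraction}, using a different test function $\phi$ for each part. Since $0<r\le L-\rho$, \cref{thm:diameter-extension} provides a probability measure $\nu$ on $\C(T,0)$ with $E_\nu\le\Lam{T}{r}$. \Cref{prop:cost-extraction} then produces a $0$-good cover with $\sum_i\phi(r_i)\le(L/r^2)\int_0^r\phi(s)\,ds$ for every nonnegative Borel $\phi$ on $[0,r]$. Each part is a single choice of $\phi$ followed by an integer rounding.

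For (i), take $\phi\equiv1$. This gives a cover with $m\le L/r$ balls, and since $m$ is an integer, $m\le\lfloor L/r\rfloor$.

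For (ii), take $\phi(s)=s^p$. The integral is $r^{p+1}/(p+1)$, so the bound becomes $Lr^{p-1}/(p+1)$. Setting $p=1$ gives $\sum_i r_i\le L/2$.

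For (iii), fix $t$ and take $\phi=\one_{[t,r]}$. The integral is $r-t$, so the count satisfies $|\{i:r_i\ge t\}|\le L(r-t)/r^2$, and integrality again yields the floor. All radii lie in $[0,r]$ because domination by $\Lam{T}{r}$ forces this almost surely, and the proposition picks a sampled cover in that full-measure set. So $\phi$ only needs to be defined on $[0,r]$.

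There is no real obstacle; the only care needed is at the edges of the range. In (iii) with $t=r$, the bound is $0$, so the claim is that some cover uses no ball of radius exactly $r$. This follows because $\Lam{T}{r}(\{r\})=0$ forces the expected count to be $0$. When $t=0$, part (iii) reduces to (i).
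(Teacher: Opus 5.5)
Your proposal is correct and matches the paper's proof. The paper likewise invokes \cref{thm:diameter-extension}, then applies \cref{prop:cost-extraction} with $\phi=1$, $\phi(s)=s^p$, and $\phi=\one_{[t,r]}$, using integrality of the left side in (i) and (iii); your edge-case remarks for $t=r$ and $t=0$ are consistent with the general argument but not needed.
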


\begin{proof}
Use \cref{thm:diameter-extension} and apply \cref{prop:cost-extraction} with $\phi=1$, with $\phi(s)=s^p$, and with $\phi=\one_{[t,r]}$, respectively. In the first and third cases, the left side is an integer.
\end{proof}

\subsection{Diameter-defect stability and interval rigidity}

\begin{lemma}\label{lem:path-ball-length}
Let $P=T[a,b]$ be a path of length $D$, let $\ell_P$ be its length measure, and define $g_P(v,s)=\ell_P(B_T(v,s)\cap P)$. Then $g_P:T\times\R_{\ge0}\to\R_{\ge0}$ is continuous.
\end{lemma}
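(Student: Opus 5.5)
The plan is to reduce continuity of $g_P$ to two elementary facts: for fixed $s$ the function is $1$-Lipschitz in $v$, and for fixed $v$ it is $2$-Lipschitz in $s$. Together they give $|g_P(v,s)-g_P(v',s')|\le d_T(v,v')+2|s-s'|$, which is joint continuity.

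For the radius, fix $v$ and compare $s<s'$. The difference $g_P(v,s')-g_P(v,s)$ is the $\ell_P$-measure of $\{x\in P:s<d_T(x,v)\le s'\}$. To bound it, I will parametrize $P$ isometrically by $\gamma:[0,D]\to P$ and let $\pi(v)$ be the nearest point of $P$ to $v$, which is the gate of $v$ onto the convex set $P$. For $x\in P$ the tree structure gives $d_T(x,v)=d_T(v,\pi(v))+d_T(x,\pi(v))$. Hence $t\mapsto d_T(\gamma(t),v)$ equals $d_T(v,\pi(v))+|t-t_0|$, where $\gamma(t_0)=\pi(v)$. The preimage of the interval $(s,s']$ under this map is at most two intervals, each of length at most $s'-s$. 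So $0\le g_P(v,s')-g_P(v,s)\le 2(s'-s)$.

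For the center, fix $s$ and compare $v,v'$ with $\delta=d_T(v,v')$. The triangle inequality gives $B_T(v,s)\subseteq B_T(v',s+\delta)$, so
\[
g_P(v,s)\le g_P(v',s+\delta)\le g_P(v',s)+2\delta.
\]
By symmetry, $|g_P(v,s)-g_P(v',s)|\le2\delta$. This gives the constant $2$ rather than $1$, which is still sufficient. Combining the two estimates gives $|g_P(v,s)-g_P(v',s')|\le 2d_T(v,v')+2|s-s'|$.

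The gate decomposition is the only step that uses the tree structure, and it is immediate: the geodesic from $x\in P$ to $v$ passes through $\pi(v)$. A more direct alternative is the explicit formula
\[
g_P(v,s)=\min(t_0,u)+\min(D-t_0,u),\qquad u=\max\{0,\,s-d_T(v,\pi(v))\}.
\]
This is continuous as soon as $v\mapsto\pi(v)$ is continuous, and the gate map is $1$-Lipschitz. I do not expect a real obstacle here, only the care needed to write the gate identity correctly.
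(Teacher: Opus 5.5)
Your proof is correct, and your main argument takes a different route from the paper. The paper proves continuity through an explicit closed formula, $g_P(v,s)=\min\{D,\tau(v)+q(v,s)\}-\max\{0,\tau(v)-q(v,s)\}$, where $\tau(v)=d_T(a,p(v))$ and $q(v,s)=\max\{s-d_T(v,P),0\}$. It then uses the fact that the nearest-point projection $p$ is $1$-Lipschitz. This is exactly the alternative you sketch at the end, since for $\tau\in[0,D]$ one has $\min\{D,\tau+q\}-\max\{0,\tau-q\}=\min\{\tau,q\}+\min\{D-\tau,q\}$.

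Your main argument instead derives the explicit modulus $|g_P(v,s)-g_P(v',s')|\le 2d_T(v,v')+2|s-s'|$ from two softer facts:
\begin{enumerate}
\item[(a)] the increment bound $0\le g_P(v,s')-g_P(v,s)\le 2(s'-s)$, where the gate identity is needed only for one fixed center;
\item[(b)] the inclusion $B_T(v,s)\subseteq B_T(v',s+\delta)$, which handles the center.
\end{enumerate}
This route never needs the projection to vary continuously with $v$, and it gives a quantitative Lipschitz bound. The paper's route instead gives the exact piecewise-affine form of $g_P$. For the paper's purpose, the Borel measurability of $A$ and $O$ in \cref{thm:diameter-stability}, either suffices.

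One correction: drop your opening claim that $g_P$ is $1$-Lipschitz in $v$, together with the bound $d_T(v,v')+2|s-s'|$; both are false. Suppose $d_T(v,P)=h$ with $\delta\le h<s$, and $s-h+\delta\le\min\{t_0,D-t_0\}$, where $\gamma(t_0)$ is the gate of $v$. Moving $v$ a distance $\delta$ toward its gate increases both one-sided lengths by $\delta$, so $g_P$ increases by exactly $2\delta$. The constant $2$ you derive later is the right one.
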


\begin{proof}
Let $p(v)$ be the nearest-point projection of $v$ onto the closed convex subtree $P$. The projection is $1$-Lipschitz: the unique-path property of a tree gives
$d_T(p(v),p(w))\le d_T(v,w)$. Hence $h(v)=d_T(v,P)$ and $\tau(v)=d_T(a,p(v))$ are continuous. For $x\in P$, the path from $v$ to $x$ passes through $p(v)$, so
$d_T(v,x)=h(v)+d_P(p(v),x)$. Put $q(v,s)=\max\{s-h(v),0\}$. Identifying $P$ with $[0,D]$ through distance from $a$, we obtain
\[
 g_P(v,s)
 =\min\{D,\tau(v)+q(v,s)\}
  -\max\{0,\tau(v)-q(v,s)\},
\]
which is continuous.
\end{proof}

\begin{theorem}\label{thm:diameter-stability}
Let $T$ be a nontrivial finite metric tree of length $L$ and diameter $D$, let $r>0$, and let $\nu$ be a probability measure on $\C(T)$ such that
$E_\nu\le\Lam{T}{r}$. Put $\Delta=\Lam{T}{r}-E_\nu$. Then
\begin{equation}\label{eq:diameter-defect-moment}
 0\le\int_0^r s\,d\Delta(s)\le\frac{L-D}{2},
\end{equation}
and, for every $t\in(0,r]$,
\begin{equation}\label{eq:diameter-defect-tail}
 \Delta([t,r])\le\frac{L-D}{2t}.
\end{equation}
Moreover, fix a diameter path $P$ and select realizing centers by the layerwise Borel lifting in \eqref{eq:borel-lifting}. For a sampled cover, put
$J_i=B_T(v_i,r_i)\cap P$. Then $A=\sum_i(2r_i-|J_i|)\ge0$ and $O=\sum_i|J_i|-D\ge0$, and
\begin{equation}\label{eq:geometric-defect}
 \E_\nu[A+O]
 =L-D-2\int_0^r s\,d\Delta(s)
 \le L-D.
\end{equation}
Consequently, for every $u>0$,
\begin{equation}\label{eq:geometric-defect-tail}
 \nu\bigl(A+O\ge u\bigr)\le\frac{L-D}{u}.
\end{equation}
\end{theorem}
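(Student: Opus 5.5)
The plan is to compare, for each sampled cover, twice the radius sum with the length of the diameter path $P$ it must cover, and then take expectations.

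\emph{Setup and measurability.} Since $E_\nu\le\Lam{T}{r}$ and $\Lam{T}{r}$ gives no mass to $(r,\infty)$, almost every sampled radius lies in $[0,r]$. Moreover $E_\nu([0,r])\le L/r<\infty$, so $\nu$-almost every sample lies in some layer $K_m=\C_m(T)\cap[0,r]^m$. On each layer we select centers $v_i$ by the Borel lifting \eqref{eq:borel-lifting}. Then $|J_i|=g_P(v_i,r_i)$, and \cref{lem:path-ball-length} shows that $A$ and $O$ are Borel functions of the sample. The first moment is finite: $\int s\,dE_\nu\le\int s\,d\Lam{T}{r}=L/2$. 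Hence $\int_0^r s\,d\Delta=L/2-\int s\,dE_\nu$ is well defined, and it is nonnegative because $\Delta\ge0$. This gives the lower bound in \eqref{eq:diameter-defect-moment}.

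\emph{Pointwise identity.} $P$ is geodesically convex and $B_T(v_i,r_i)$ is a subtree, so $J_i$ is a subarc of $P$. For $x,y\in J_i$,
\[
 d(x,y)\le d(x,v_i)+d(v_i,y)\le 2r_i,
\]
so $|J_i|\le 2r_i$ and $A\ge0$. The balls cover $T\supseteq P$, so the arcs $J_i$ cover $P$, and subadditivity of length gives $\sum_i|J_i|\ge D$, that is, $O\ge0$. By construction,
\[
 2\sum_i r_i = A+O+D
\]
for every sample.

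\emph{Taking expectations.} By Tonelli,
\[
 \E_\nu\Bigl[2\sum_i r_i\Bigr]=2\int s\,dE_\nu = L-2\int_0^r s\,d\Delta(s).
\]
Substituting the pointwise identity yields \eqref{eq:geometric-defect}. Since $A+O\ge0$, the upper bound in \eqref{eq:diameter-defect-moment} follows. The tail bound \eqref{eq:diameter-defect-tail} follows from $t\,\Delta([t,r])\le\int_{[t,r]}s\,d\Delta(s)\le\int_0^r s\,d\Delta(s)$. Finally, \eqref{eq:geometric-defect-tail} is Markov's inequality applied to the nonnegative variable $A+O$, using $\E_\nu[A+O]\le L-D$.

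\emph{Main obstacle.} The only delicate point is measurability and the handling of null sets. Centers are not part of the data of $\nu$, so $A$ and $O$ are defined through the Borel selection. On the null set where the sample leaves every layer $K_m$, we set $A=O=0$; this does not affect any expectation. With that convention, the argument above is complete.
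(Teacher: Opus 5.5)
Your proposal is correct and follows essentially the same route as the paper: the pointwise relations $D\le\sum_i|J_i|\le 2\sum_i r_i$ (equivalently $A,O\ge0$ with $A+O=2\sum_i r_i-D$), Tonelli to write $\E_\nu[\sum_i r_i]=\int s\,dE_\nu=L/2-\int_0^r s\,d\Delta(s)$, positivity of $\Delta$ for the tail bound, and Markov's inequality. The differences are only cosmetic. You derive the upper moment bound from the identity rather than directly from $D\le 2\sum_i r_i$. Your appeal to $E_\nu([0,r])<\infty$ to place samples in a layer is unnecessary, since every cover already lies in some $\C_m(T)$ and only the almost-sure bound $r_i\le r$ is needed.
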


\begin{proof}
The domination implies that all sampled radii lie in $[0,r]$ almost surely. Every ball meets the diameter path $P$ in an interval of length at most twice its radius. Since the sampled balls cover $P$, $D\le\sum_i|J_i|\le2\sum_i r_i$. Taking expectations, we have $D/2\le\int_0^r s\,dE_\nu(s)\le(L/r^2)\int_0^r s\,ds=L/2$.
Subtracting from the first moment of $\Lam{T}{r}$ proves \eqref{eq:diameter-defect-moment}. Since $\Delta$ is positive, $t\Delta([t,r])\le\int_t^r s\,d\Delta(s)$, which gives \eqref{eq:diameter-defect-tail}.

By \cref{lem:path-ball-length} and the layerwise Borel center selection, $A$ and $O$ are Borel random variables. They are nonnegative by the two inequalities above, and $A+O=2\sum_i r_i-D$. Taking expectations and using
$\int_0^r s\,dE_\nu(s)=L/2-\int_0^r s\,d\Delta(s)$ proves the identity in \eqref{eq:geometric-defect}. Its inequality follows from the positivity of $\Delta$, and \eqref{eq:geometric-defect-tail} follows from Markov's inequality.
\end{proof}

We now specialize \cref{thm:diameter-stability} to the zero-defect case.

\begin{theorem}\label{thm:interval-rigidity}
Let $I$ be a metric interval of length $L>0$, let $r>0$, and let $\nu$ be a probability measure on $\C(I)$ such that $E_\nu\le(L/r)U[0,r]$. Then the following statements hold.
\begin{enumerate}[label=\textup{(\roman*)}]
\item The measure inequality is an equality: $E_\nu=(L/r)U[0,r]$.
\item For $\nu$-almost every cover $(r_1,\ldots,r_m)$, all radii are positive and $\sum_{i=1}^m r_i=L/2$.
\item For $\nu$-almost every radius tuple satisfying \textup{(ii)}, every choice of centers that realizes the tuple gives intervals $B_I(v_i,r_i)$ with pairwise disjoint interiors; each interval has length $2r_i$, and their union is $I$.
\end{enumerate}
\end{theorem}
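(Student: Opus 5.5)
The plan is to apply \cref{thm:diameter-stability} with $T=I$, so that $D=L$ and the diameter path $P$ is $I$ itself. The right-hand sides of \eqref{eq:diameter-defect-moment} and \eqref{eq:geometric-defect} then vanish. Hence $\int_0^r s\,d\Delta(s)=0$ and $\E_\nu[A+O]=0$, where $\Delta=\Lam{I}{r}-E_\nu$ is a positive measure on $[0,r]$.

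For (i), $\int_0^r s\,d\Delta(s)=0$ with $\Delta\ge0$ forces $\Delta$ to be concentrated at $\{0\}$. Since $\Lam{I}{r}$ has no atom at $0$ and $E_\nu\ge0$, we get $\Delta(\{0\})\le\Lam{I}{r}(\{0\})=0$. Thus $\Delta=0$ and $E_\nu=\Lam{I}{r}$. Positivity of radii in (ii) follows from $E_\nu(\{0\})=\Lam{I}{r}(\{0\})=0$, which says that almost surely no sampled radius equals $0$.

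For the sum identity in (ii), we have $A+O=2\sum_ir_i-L\ge0$ almost surely, and $\E_\nu[A+O]=0$. Therefore $A+O=0$ almost surely, which gives $\sum_i r_i=L/2$. Domination by $\Lam{I}{r}$ also puts all radii in $[0,r]$ almost surely, so the Borel lifting applies.

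Part (iii) is the main obstacle, because it must hold for \emph{every} realizing choice of centers, not only the Borel-selected ones. The plan is to make it pointwise and deterministic. Fix a radius tuple with $\sum_i r_i=L/2$ and any centers $v_i$ with $\bigcup_i B_I(v_i,r_i)=I$. Each $J_i=B_I(v_i,r_i)$ is an interval of length at most $2r_i$. Covering gives
\[
L\le\sum_i|J_i|\le2\sum_ir_i=L,
\]
so both inequalities are equalities. The first equality, $\sum_i|J_i|=|\bigcup_iJ_i|$, means the pairwise overlaps have length zero; for closed intervals this is the same as pairwise disjoint interiors. The second equality gives $|J_i|=2r_i$ for every $i$. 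So every realizing center choice works for every tuple in the full-measure set from (ii), and no measurability of arbitrary center choices is needed.
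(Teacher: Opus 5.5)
Your proof is correct and follows essentially the same route as the paper: specialize to $D=L$, use $\int s\,d\Delta(s)=0$ and the absence of an atom at $0$ for (i) and for positivity, and prove (iii) by the same deterministic chain $L\le\sum_i|J_i|\le2\sum_ir_i=L$ for arbitrary realizing centers. The only difference is that the paper obtains $\sum_ir_i=L/2$ directly from the pointwise bound $L\le2\sum_ir_i$ and $\int s\,dE_\nu(s)\le L/2$. Your detour through $A+O$ and the Borel lifting is harmless but unnecessary, since $2\sum_ir_i-L$ is already a Borel function of the radius tuple alone.
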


\begin{proof}
Here $D=L$. Every sampled cover satisfies
$L\le2\sum_i r_i$, while its expected radius sum is at most $L/2$. Hence
$\sum_i r_i=L/2$ almost surely.

Let $\Delta=(L/r)U[0,r]-E_\nu$. Then
$\int s\,d\Delta(s)=0$, so $\Delta$ is supported on $\{0\}$. The target measure has no atom at $0$, and therefore $\Delta=0$. This proves (i). It also gives $E_\nu(\{0\})=0$, which proves the positivity assertion in (ii).

Fix a cover and realizing centers for which (ii) holds, and put
$J_i=B_I(v_i,r_i)$. Then $L=|\bigcup_iJ_i|\le\sum_i|J_i|\le2\sum_i r_i=L$. Equality holds throughout. Thus $|J_i|=2r_i$ for every $i$, and equality between the length of the union and the sum of the lengths implies that the interiors are pairwise disjoint. This proves (iii).
\end{proof}

The following threshold is implicit in the interval obstruction and the verification for trees with at most three leaves in Norin and Turcotte~\cite[Section~6.1]{NorinTurcotte}; \cref{thm:interval-rigidity} gives the stronger equality structure.

\begin{corollary}[Norin--Turcotte, Section~6.1~\cite{NorinTurcotte}]\label{cor:interval-threshold}
Let $I$ be a metric interval of length $L>0$, and let $r>0$. There is a probability measure $\nu$ on $\C(I,0)$ such that $E_\nu\le(L/r)U[0,r]$ if and only if $L\ge2r$.
\end{corollary}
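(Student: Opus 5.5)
Both directions follow quickly from results already proved, so the plan is to split into sufficiency and necessity.

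\emph{Sufficiency.} If $L\ge2r$, then \cref{thm:main} applies directly to the interval $I$ and produces the required probability measure $\nu$ on $\C(I,0)$ with $E_\nu\le(L/r)U[0,r]$. Alternatively, \cref{thm:diameter-extension} gives the same conclusion, since $D=L$ means its range $r\le L-L/2=L/2$ is exactly $L\ge2r$. No further work is needed in this direction.

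\emph{Necessity.} Suppose $\nu$ is a probability measure on $\C(I,0)\subseteq\C(I)$ with $E_\nu\le(L/r)U[0,r]$. I would invoke \cref{thm:interval-rigidity}, whose parts (i) and (ii) give:
\begin{itemize}
\item $E_\nu=(L/r)U[0,r]$;
\item almost every sampled cover $(r_1,\ldots,r_m)$ satisfies $\sum_i r_i=L/2$ with all radii positive.
\end{itemize}
Domination by the target forces every sampled radius into $[0,r]$ almost surely, so such a cover satisfies $L/2=\sum_i r_i\le m r$.

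The cleanest route is to use the total mass. The equality $E_\nu=(L/r)U[0,r]$ says
\[
\E_\nu[m]=E_\nu([0,r])=\frac{L}{r}.
\]
A cover of a nontrivial interval needs at least one ball, so $m\ge1$ almost surely. Hence $L/r\ge1$, which gives only $r\le L$; that is not yet enough.

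To reach $L\ge2r$, I would apply the packing obstruction \cref{cor:packing-obstruction} with the two endpoints of $I$. Here $m=2$ and $\delta=L$.
\begin{itemize}
\item If $r\le L/2$, we are already done.
\item If $r>L/2$, part (ii) gives
\[
r^2\le L\Bigl(r-\frac{L}{4}\Bigr),
\]
that is, $(r-L/2)^2\le0$, so $r=L/2$. This contradicts $r>L/2$.
\end{itemize}
Therefore $r\le L/2$. Note that \cref{cor:packing-obstruction} requires the equivalent conditions of \cref{thm:random-fractional-duality}, and these follow from (i) there, since $\nu$ is supported on $\C(I,0)$.

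I expect no real obstacle. The only care points are using the hypotheses of \cref{thm:random-fractional-duality} correctly and checking the boundary case $r=L/2$, which is already covered by the sufficiency direction.
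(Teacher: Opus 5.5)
Your proof is correct. Sufficiency is the same as the paper's (via \cref{thm:main}). Your necessity argument takes a different route from the one the paper points to.

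The paper attributes necessity to the interval obstruction, which \cref{thm:interval-rigidity} sharpens. You had that tool in hand and could have finished at once. Since $\sum_i r_i=L/2$ almost surely, every sampled radius is at most $L/2$, so $E_\nu$ gives $(L/2,r]$ zero mass. But $E_\nu=(L/r)U[0,r]$ assigns that interval mass $(L/r^2)(r-L/2)>0$ whenever $r>L/2$, a contradiction. Instead you set rigidity aside after the total-mass bound $r\le L$ proved too weak, and switched to \cref{cor:packing-obstruction} with the two endpoints, $m=2$ and $\delta=L$. Your computation $(r-L/2)^2\le0$ is right. Your hypothesis check through (i) of \cref{thm:random-fractional-duality} is also valid.

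Because of that switch, your rigidity paragraph is unnecessary. What you actually use is the case $k=2$, $a=L/2$ of the necessity proof of \cref{thm:star-threshold}; the remark after that theorem notes this specialization.

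That route also does not need the duality machinery behind \cref{cor:packing-obstruction}. Every cover of $I$ contains both endpoints, and a ball of radius less than $L/2$ contains at most one of them. So you have the pointwise inequality $2\le\sum_i q(r_i)$, with $q=1$ on $[0,L/2)$ and $q=2$ on $[L/2,\infty)$. Integrating it against $E_\nu$ gives the same bound directly.

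The rigidity route yields more structure: the equality $E_\nu=\Lam{I}{r}$ and the exact tiling. The counting route is shorter and extends unchanged to equal-arm stars.
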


\begin{corollary}\label{cor:admissible-radii}
For a nontrivial finite metric tree $T$ of length $L$ and diameter $D$, define
\[
 \mathcal A(T)=\left\{r>0:\text{there is a probability measure }\nu
 \text{ on }\C(T,0)\text{ with }E_\nu\le\Lam{T}{r}\right\}.
\]
Then $(0,L-D/2]\subseteq\mathcal A(T)\subseteq(0,L)$. Moreover, $\mathcal A(I_L)=(0,L/2]$ for a metric interval of length $L$, and $\mathcal A(S_{k,a})=(0,(k-1)a]$ for every equal-arm metric star $S_{k,a}$.
\end{corollary}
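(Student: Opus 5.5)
The statement has three parts: the inclusion $(0,L-D/2]\subseteq\mathcal A(T)$, the inclusion $\mathcal A(T)\subseteq(0,L)$, and the exact values for intervals and equal-arm stars. I will prove them in that order.

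The first inclusion is exactly \cref{thm:diameter-extension}. For every $r$ with $0<r\le L-D/2$ it supplies a probability measure $\nu$ on $\C(T,0)$ with $E_\nu\le\Lam{T}{r}$.

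For the upper inclusion, suppose $r\in\mathcal A(T)$ with witness $\nu$. Every cover of the nontrivial tree $T$ contains at least one radius, and domination forces almost every sampled radius into $[0,r]$. Hence
\[
1\le E_\nu([0,r])\le\Lam{T}{r}([0,r])=L/r,
\]
so $r\le L$. It remains to exclude $r=L$. If $r=L$, then $E_\nu([0,L])\le1$ while each cover has at least one ball, so almost every sample is a one-ball cover $(r_1)$, and $E_\nu=\mathrm{law}(r_1)\le U[0,L]$. Both sides are probability measures, so the law of $r_1$ equals $U[0,L]$. But a single ball covering $T$ has radius at least $\rho=D/2>0$, so the law of $r_1$ gives zero mass to $[0,\rho)$, whereas $U[0,L]$ does not. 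This contradiction gives $\mathcal A(T)\subseteq(0,L)$. Alternatively, one can apply \cref{prop:one-ball-extension}: its condition $r^2\le L(r-\rho)$ fails at $r=L$ because $\rho>0$.

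For an interval $I_L$, \cref{cor:interval-threshold} gives $\mathcal A(I_L)=(0,L/2]$ directly. If one prefers not to lean on the citation, sufficiency is \cref{thm:main}, and necessity follows from \cref{thm:interval-rigidity}. Almost every sample satisfies $\sum_i r_i=L/2$ with all radii at most $r$, and $E_\nu=\Lam{I}{r}$ has total mass $L/r$. Taking expectations of $\sum_i r_i\le r\cdot\#\{i\}$ gives $L/2\le r\cdot L/r$, which is vacuous, so this route is not decisive. A cleaner necessity argument is the packing bound \cref{cor:packing-obstruction} with the two endpoints, $m=2$ and $\delta=L$. If $r>L/2$, it gives $r^2\le L(r-L/4)$, which rearranges to $(r-L/2)^2\le0$, a contradiction. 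This is the main point where the obstruction has to be produced, and it settles the interval case cleanly.

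For stars, \cref{thm:star-threshold} gives $\mathcal A(S_{k,a})=(0,(k-1)a]$ directly.
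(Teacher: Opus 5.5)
Your proof is correct and matches the paper's argument. The lower inclusion is \cref{thm:diameter-extension}; the mass count $1\le E_\nu([0,r])\le L/r$ gives $r\le L$; and $r=L$ is excluded because $E_\nu$ would have to equal $U[0,L]$ while a one-ball cover has radius at least $\rho(T)>0$, with the interval and star cases read off from \cref{cor:interval-threshold,thm:star-threshold}. Your packing check ($m=2$, $\delta=L$, giving $(r-L/2)^2\le0$) is a valid self-contained substitute for the cited interval corollary, and the rigidity route you dismissed also works: if $r>L/2$ then $E_\nu=\Lam{I}{r}$ charges $(L/2,r]$, which is impossible when almost surely $\sum_i r_i=L/2$.
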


\begin{proof}
The first inclusion is \cref{thm:diameter-extension}. If $r>L$, then $\Lam{T}{r}$ has total mass $L/r<1$, whereas the expected radius measure of every random cover has total mass at least one.

We finally exclude $r=L$. Suppose that $E_\nu\le\Lam{T}{L}=U[0,L]$. Since every sampled cover is nonempty, $1\le E_\nu([0,L])\le U[0,L]([0,L])=1$. Hence almost every sampled cover consists of exactly one ball, and the positive measure $U[0,L]-E_\nu$ has total mass zero. Thus $E_\nu=U[0,L]$. On the other hand, a one-ball cover of $T$ has radius at least $\rho(T)>0$, so $E_\nu([0,\rho(T)))=0$, whereas $U[0,L]([0,\rho(T)))=\rho(T)/L>0$, a contradiction. Therefore $\mathcal A(T)\subseteq(0,L)$. The two exact descriptions are \cref{cor:interval-threshold,thm:star-threshold}.
\end{proof}

\section*{Declaration of competing interest}

We declare that we have no known competing financial interests or personal relationships that could have appeared to influence the work reported in this paper.

\section*{Data availability}

We did not use or generate data for this study.

\section*{Declaration on the use of AI}

During the preparation of this work, we used ChatGPT 5.6 Pro to discuss possible proof strategies, audit intermediate arguments, and improve the exposition. We reviewed and verified every suggestion used in the manuscript, revised the text where necessary, and accept full responsibility for the mathematical content, citations, and final text.

\end{document}